\newcommand\blfootnote[1]{%
  \begingroup
  \renewcommand\thefootnote{}\footnote{#1}%
  \addtocounter{footnote}{-1}%
  \endgroup
}
\newtheorem{theorem}{Theorem}
\newtheorem{prop}{Proposition}
\newtheorem{lemma}{Lemma}
\newtheorem{remark}{Remark}
\newtheorem{claim}{Claim}
\newtheorem{cor}{Corollary}
\numberwithin{equation}{section}
\numberwithin{theorem}{section}
\numberwithin{definition}{section}
\numberwithin{cor}{section}
\numberwithin{prop}{section}
\numberwithin{remark}{section}
\numberwithin{claim}{section}
\numberwithin{lemma}{section}
\def\Xint#1{\mathchoice
  {\XXint\displaystyle\textstyle{#1}}%
  {\XXint\textstyle\scriptstyle{#1}}%
  {\XXint\scriptstyle\scriptscriptstyle{#1}}%
  {\XXint\scriptscriptstyle\scriptscriptstyle{#1}}%
  \!\int}
\def\XXint#1#2#3{{\setbox0=\hbox{$#1{#2#3}{\int}$}
  \vcenter{\hbox{$#2#3$}}\kern-.5\wd0}}
\def\dashint{\Xint-}
\author{Gang Liu, G\'abor Sz\'ekelyhidi}
\address{School of Mathematical Sciences, Shanghai Key Laboratory of PMMP, East China Normal University}
\email{gangliunw@gmail.com}
\address{Department of Mathematics, University of Notre Dame, Notre Dame, IN 46556}
\email{gszekely@nd.edu}
\title[Gromov-Hausdorff limits]{Gromov-Hausdorff limits of K\"ahler manifolds with Ricci curvature bounded below}
\date{}
\begin{document}
\begin{abstract}
  We show that non-collapsed Gromov-Hausdorff limits of polarized K\"ahler manifolds, with Ricci curvature bounded below, are normal projective varieties, and the metric singularities
of the limit space are precisely given by a countable union of analytic subvarieties.
This extends a fundamental result of Donaldson-Sun, in which 2-sided Ricci curvature bounds were assumed. As a basic ingredient we show that, under lower Ricci curvature bounds, almost Euclidean balls in K\"ahler manifolds admit good holomorphic coordinates. Further applications are integral bounds for the scalar curvature on balls, and a rigidity theorem for K\"ahler manifolds with almost Euclidean volume growth.
\end{abstract}
\maketitle

\section{Introduction}
\blfootnote{The first author is partially supported by the Science and Technology Commission of Shanghai Municipality No. 18dz2271000. The second author is supported in part by NSF grant DMS-1350696}

The structure of Gromov-Hausdorff limits of Riemannian manifolds with Ricci curvature bounded below has been studied extensively since the seminal work of  Cheeger-Colding~\cite{CC,[CC2],[CC3],[CC4]}, with a great deal of more recent important progress (see e.g. \cite{CN}\cite{CN1}\cite{CN2}\cite{CJN}). In the K\"ahler setting, the recent breakthrough work of Donaldson-Sun~\cite{DS1} has led to many important advances. They proved in particular that the Gromov-Hausdorff limit of a sequence of non-collapsed, polarized K\"ahler manifolds, with 2-sided Ricci curvature bounds, is a normal projective variety.
Our first result is a generalization of this statement, removing the assumption of an upper bound for the Ricci curvature.

\begin{theorem}\label{thm:main}
  Given $n,d, v > 0$, there are constants $k_1, N > 0$ with the following property. Let $(M^n_i, L_i, \omega_i)$ be a sequence of polarized K\"ahler manifolds such that
  \begin{itemize}
    \item $L_i$ is a Hermitian holomorphic line bundle with curvature $-\sqrt{-1}\omega_i$;
    \item $\mathrm{Ric}(\omega_i) > - \omega_i$, $\mathrm{vol}(M_i) > v$, and $\mathrm{diam}(M_i, \omega_i) < d$;
    \item The sequence $(M^n_i, \omega_i)$ converges in the Gromov-Hausdorff sense to a limit metric space $X$.
    \end{itemize}
    Then each $M_i^n$ can be embedded in a subspace of $\mathbb{CP}^N$ using sections of $L_i^{k_1}$, and the limit $X$ is homeomorphic to a normal projective variety in $\mathbb{CP}^N$. Taking a subsequence and applying suitable projective transformations, the $M_i \subset \mathbb{CP}^N$ converge to $X$ as algebraic varieties.
\end{theorem}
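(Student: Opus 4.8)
The plan is to follow the strategy of Donaldson--Sun~\cite{DS1}, whose central ingredient is a \emph{partial $C^0$ estimate}: one wants $k_1, N$, depending only on $n,d,v$, so that for every $i$ the Bergman function $\rho_{k_1}(x) = \sum_j |s_j(x)|^2$ of $L_i^{k_1}$ -- formed from an $L^2$-orthonormal basis $\{s_j\}$ of $H^0(M_i, L_i^{k_1})$ with respect to the induced Hermitian metric and the volume form of $\omega_i$ -- is bounded below by a positive constant. Granting this, the sections of $L_i^{k_1}$ give an embedding $\iota_i : M_i \to \mathbb{CP}^N$, and gradient estimates for holomorphic sections -- obtained from Moser iteration (Cheng--Yau), which needs only the lower Ricci bound -- show the $\iota_i$ are uniformly Lipschitz after the natural normalization. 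Passing to a subsequence and applying projective transformations, the degrees of $\iota_i(M_i)$ are uniformly bounded by the volume, so the images converge in the Hausdorff sense and as algebraic cycles to a subvariety $W \subset \mathbb{CP}^N$, which one then matches with the Gromov--Hausdorff limit $X$.

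The proof of the partial $C^0$ estimate is the heart of the matter. As in~\cite{DS1} the scheme is local: at a point $p \in X$ one passes to a tangent cone $C(Y)$, which by Cheeger--Colding~\cite{CC} is a metric cone; since the $M_i$ are K\"ahler, $C(Y)$ carries a K\"ahler structure on its regular part with $\omega = \tfrac{\sqrt{-1}}{2}\partial\bar\partial r^2$, so the trivial bundle with Hermitian metric $e^{-r^2}$ models the rescaled limit of the $L_i$, and the constant section is a \emph{model peak section} with Gaussian decay. Cutting this off at a controlled scale and correcting the $\bar\partial$-error via H\"ormander's $L^2$-estimate -- which requires only a lower bound on $\mathrm{Ric}(\omega_i)$ together with the positivity of the curvature of $L_i^k$ for $k$ large -- produces an honest holomorphic section of $L_i^k$ concentrated near $p$; a contradiction and compactness argument then upgrades this to the uniform lower bound on $\rho_{k_1}$.

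I expect the main obstacle to be precisely this circle of estimates, because without the upper Ricci bound two new difficulties appear. First, the convergence of the Hermitian holomorphic line bundles $(L_i, h_i)$ is much weaker than in~\cite{DS1}, so one must separately construct a limiting Hermitian holomorphic line bundle on the regular part of $X$ and of each tangent cone and control its local potentials well enough to run the gluing. Second, one must invoke the recent structure theory for Ricci limit spaces -- the codimension-$4$ bound and rectifiability of the singular set, and the finer analysis of tangent cones (Cheeger--Naber, Cheeger--Jiang--Naber~\cite{CN,CJN}) -- to know that the relevant tangent cones have enough holomorphic structure (they should be, essentially, normal affine varieties) and that the cutoff errors are negligible; here the Cheeger--Colding cutoff functions with Hessian bounds are the basic tool, and these are available with only a lower Ricci bound.

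With the partial $C^0$ estimate in hand, the peak sections constructed above separate points of $X$ and separate tangent directions at regular points, so the limit of the $\iota_i$ descends to a homeomorphism $X \to W$; injectivity and properness come from the uniform lower bound on $\rho_{k_1}$ together with the Lipschitz bounds, and on the regular part $\mathcal{R}$ the limiting complex structure agrees with the one induced from $W$, with $X \setminus \mathcal{R}$ of Hausdorff codimension at least $2$ (in fact at least $4$). Finally, to see $W$ is normal it suffices to verify Riemann extension: a bounded holomorphic function on $\mathcal{R}$ near the singular set $\mathcal{S}$ lies in $L^2$ (as $\mathcal{S}$ has large codimension and $W$ has finite volume), and a H\"ormander/Hartogs-type argument on $\mathcal{R}$, using that $\mathcal{S}$ has zero capacity, shows the local ring of $W$ is integrally closed; equivalently, the ring $\bigoplus_k H^0(W,\mathcal{O}(k))$, identified with the ring of $L^2$ holomorphic sections on $\mathcal{R}$, defines a normal variety isomorphic to $X$. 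The same analysis together with the quantitative stratification of Cheeger--Naber identifies the metric singular set with a countable union of analytic subvarieties, stratified by the symmetry type of the tangent cones.
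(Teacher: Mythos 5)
Your outline correctly reproduces the Donaldson--Sun scheme (peak sections on tangent cones, cutoff plus H\"ormander correction, contradiction/compactness to get the partial $C^0$ estimate, then separation of points and normality via extension across a codimension-$\geq 2$ singular set), and you correctly identify the two places where the absence of an upper Ricci bound hurts. But at the first of these places there is a genuine gap: you say one ``must separately construct a limiting Hermitian holomorphic line bundle on the regular part \ldots and control its local potentials well enough to run the gluing,'' without giving any mechanism for doing so. In \cite{DS1} this step is free, because two-sided Ricci bounds give $C^{1,\alpha}$ (harmonic-coordinate) convergence on the regular set, hence holomorphic charts and bounded local K\"ahler potentials in the limit. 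With only $\mathrm{Ric}>-\omega$ there is no elliptic regularity available on the almost-regular set, and the entire Section~\ref{sec:regular} of the paper is devoted to supplying exactly this missing input (Theorem~\ref{thm:chart}): one smooths the metric by running the Ricci flow, using the almost-Euclidean isoperimetric inequality of Cavalletti--Mondino and Perelman-type pseudolocality to get curvature bounds $|\mathrm{Rm}|<a/t$ and distance distortion control, then proves via Kotschwar/Huang--Tam-type evolution estimates that the original complex structure $J_0$ has bounded derivatives with respect to the smoothed metric, which yields a holomorphic chart; a separate K\"ahler--Ricci flow argument (gluing the metric onto $\mathbb{CP}^n$ and integrating $\dot\phi_t=\log(\eta_t^n/\Omega)$ backwards) produces the bounded potential $\phi$ with $|\phi-r^2|$ small. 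Without something of this kind you cannot even define the flat limit bundle $(L,e^{r^2}h)$ over the regular part of the cone, so the gluing and $\bar\partial$-correction cannot be set up.

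Two smaller points. First, your claim that the Cheeger--Colding cutoff functions with Hessian bounds suffice to make ``the cutoff errors negligible'' is not right as stated: the delicate issue is the $L^2$ norm of the gradient of the cutoff near the singular set $\Sigma$ of the cone, which the paper controls either by the Minkowski-content bound of \cite{CJN} or, in the Appendix, by a Chen--Donaldson--Sun-type argument bounding $\int (S_i+2n)$ on tubular neighborhoods; you do cite \cite{CJN} so this is more an imprecision than a gap. Second, you do not need the tangent cones to be ``normal affine varieties''; the paper only uses holomorphic charts on the regular part of the cone together with the holonomy argument (replacing $L$ by $L^m$ to kill the flat holonomy up to $\delta$), which is the correct and weaker statement.
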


This result implies that, given bounds on $n,d,v$, only finitely many Hilbert polynomials appear. An immediate corollary is the following.
\begin{cor}
  Given $n, d, v > 0$, there are finitely many diffeomorphism types of polarized K\"ahler manifolds $(M^n, L, \omega)$, of dimension $n$, such that $\mathrm{Ric}(\omega) > -\omega$, the curvature of $L$ is $-\sqrt{-1}\omega$, and $\mathrm{vol}(M) > v, \mathrm{diam}(M) < d$.
\end{cor}

The strategy of proof of Theorem~\ref{thm:main} follows Donaldson-Sun~\cite{DS1}, and a key step is the proof of the following partial $C^0$-estimate, conjectured originally by Tian~\cite{Tian90} for Fano manifolds.

\begin{theorem}\label{thm:partialc0}
  Given $n,d, v > 0$, there are $k_2, b > 0$ with the following property. Suppose that $(M, L, \omega)$ is a polarized K\"ahler manifold with $\mathrm{Ric}(\omega) > -\omega$, $\mathrm{vol}(M) > v$ and $\mathrm{diam}(M, \omega) < d$. Then for all $p\in M$, the line bundle $L^{k_2}$ admits a holomorphic section $s$ over $M$ satisfying $\vert s\Vert_{L^2} = 1$, and $|s(p)| > b$.
\end{theorem}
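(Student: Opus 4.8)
The plan is to follow the contradiction scheme of Donaldson--Sun~\cite{DS1}, with the modifications forced by working with only a one-sided Ricci bound. If the statement fails, I can choose a sequence $(M_i,L_i,\omega_i)$ satisfying the hypotheses, points $p_i\in M_i$, and integers $k_i\to\infty$, such that every holomorphic section $s$ of $L_i^{k_i}$ with $\|s\|_{L^2}=1$ satisfies $|s(p_i)|\le i^{-1}$. The volume and diameter bounds give, via Bishop--Gromov, uniform two-sided volume bounds on unit balls, so after passing to a subsequence $(M_i,\omega_i,p_i)$ converges in the pointed Gromov--Hausdorff sense to a non-collapsed limit $(X,p_\infty)$, to which the Cheeger--Colding structure theory applies: $X$ has an open dense regular set which is a smooth Kähler manifold carrying a limiting metric, together with a closed singular set of real codimension at least two.

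To locate the contradiction I would zoom in at $p_\infty$. Rescaling $\omega_i\rightsquigarrow k_i\omega_i$ makes the curvature of $L_i^{k_i}$ equal to $k_i\omega_i$ and relaxes the normalized Ricci lower bound to $-1/k_i\to 0$; combining a diagonal argument with the Cheeger--Colding theorem that tangent cones of non-collapsed Ricci limits are metric cones, I obtain rescalings of the $(M_i,L_i)$ converging, on regular sets, to a Kähler cone $C(Y)$ with vertex $o$ and Kähler form $\omega_{C(Y)}=\tfrac{\sqrt{-1}}{2}\partial\bar\partial r^2$, where $r=d(\cdot,o)$, carrying a limiting Hermitian holomorphic line bundle $\mathcal L$ of curvature $-\sqrt{-1}\,\omega_{C(Y)}$; near a regular point $\mathcal L$ is then trivialized with weight $e^{-r^2/2}$ up to a pluriharmonic term.

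The heart of the argument is the construction of a holomorphic \emph{peak section}. On the model $\mathbb C^n$ the constant section of the trivial bundle with weight $e^{-|z|^2/2}$ is $L^2$ and is maximized at the origin; I would produce the analogue on $C(Y)$: a holomorphic section $\sigma$ of $\mathcal L$ over the regular part with $\int|\sigma|^2<\infty$ and $|\sigma(o)|/\|\sigma\|_{L^2}$ bounded below in terms of $n$ alone --- equivalently, a holomorphic function of sub-Gaussian growth not vanishing at $o$ --- exploiting the natural $\mathbb C^*$-action on the cone and three-circles-type estimates for holomorphic functions on spaces with $\mathrm{Ric}\ge 0$. For the transplantation it is cleaner to work at finite scale: on a large ball $B(o,R)\subset C(Y)$ I build an \emph{approximately} holomorphic cutoff section $\sigma_R$ with $\|\bar\partial\sigma_R\|_{L^2}$ small and $|\sigma_R(o)|/\|\sigma_R\|_{L^2}\ge c(n)>0$, needing only the local Kähler structure near $o$ and an $L^2$-estimate for $\bar\partial$ with weight $e^{-r^2/2}$ on the cone, the latter available because $\partial\bar\partial(r^2/2)=\omega_{C(Y)}$ dominates the nonnegative Ricci form. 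I then transfer $\sigma_R$ through the Gromov--Hausdorff approximation, using the $\varepsilon$-regularity and almost-rigidity theorems of Cheeger--Colding, to a smooth section $s_i'$ of $L_i^{k_i}$ on $M_i$ with $|s_i'(p_i)|/\|s_i'\|_{L^2}$ bounded below, $\bar\partial s_i'$ small in $L^2$ and negligible near $p_i$. Since the curvature $k_i\omega_i$ of $L_i^{k_i}$ dominates $-\mathrm{Ric}(\omega_i)>-\omega_i$, Hörmander's $L^2$-estimate for $\bar\partial$ applies with a uniform constant, so I can solve $\bar\partial u_i=\bar\partial s_i'$ with $\|u_i\|_{L^2}$ small; interior elliptic estimates then bound $|u_i(p_i)|$, so $s_i=s_i'-u_i$ is holomorphic with $\|s_i\|_{L^2}$ bounded above and $|s_i(p_i)|$ bounded below (after rescaling back to the metric $\omega_i$, the scaling factors relating the two normalizations only help). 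This contradicts the defining property of the $M_i$.

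The reason this is not a formal consequence of~\cite{DS1} is that with only a lower Ricci bound the limit $X$ and its tangent cones have singular sets of real codimension merely two, rather than four, so one loses the strong regularity --- quantitative Hölder control of the complex structure off the singular set, decay rates, triviality of local holonomy --- that in~\cite{DS1} makes both the peak-section construction and the transplantation routine. The main obstacle is therefore to control, near these low-codimension singularities, the non-integrability of the almost-complex structures of the $M_i$ and the possible non-triviality of $\mathcal L$, and to show that a definite fraction of the peak section survives the $\bar\partial$-correction; this is where one must invoke the finer structural results on singular sets of non-collapsed Ricci limits~\cite{CJN} together with a Liouville-type theory for holomorphic functions on Ricci-nonnegative limit spaces.
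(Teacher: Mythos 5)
Your overall scheme --- argue by contradiction, rescale to a tangent cone, build an approximate peak section there, transplant it through the Gromov--Hausdorff approximation, and correct with H\"ormander's $L^2$-estimate --- is the same Donaldson--Sun strategy the paper follows, and the pieces you do spell out (the curvature of $L_i^{k_i}$ plus $\mathrm{Ric}(\omega_i)$ being bounded below by $(k_i-1)\omega_i$ so that H\"ormander applies uniformly; the use of \cite{CJN} to control cutoffs near the codimension-two singular set) are correct. But there is a genuine gap at the foundation: you assert that the rescaled manifolds converge ``on regular sets'' to a \emph{K\"ahler} cone carrying a limiting \emph{holomorphic} line bundle $\mathcal{L}$, trivialized with weight $e^{-r^2/2}$ up to a pluriharmonic term. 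With only a lower Ricci bound, Cheeger--Colding theory gives none of this: convergence on the regular set is not smooth, there is no a priori complex structure on the limit, no holomorphic transition functions, and no reason the local K\"ahler potentials should converge to $r^2$ modulo pluriharmonic functions. This is precisely the new difficulty relative to \cite{DS1}, where two-sided Ricci bounds yield $C^{1,\alpha}$ (or better) convergence in harmonic coordinates. You acknowledge the issue in your final paragraph (``non-integrability of the almost-complex structures''), but the tools you invoke --- Cheeger--Colding $\epsilon$-regularity, \cite{CJN}, a Liouville-type theory --- are purely metric and cannot produce holomorphic charts.

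The paper fills this gap with Theorem~\ref{thm:chart}, which occupies most of Section~\ref{sec:regular}: on a ball GH-close to Euclidean with $\mathrm{Ric}>-\epsilon$, one smooths the metric by Ricci flow (via the Cavalletti--Mondino almost-Euclidean isoperimetric inequality and pseudolocality), proves through Kotschwar-type evolution estimates that the smoothed metric remains approximately K\"ahler for the \emph{original} complex structure, and then applies Newlander--Nirenberg/H\"ormander to obtain a uniform holomorphic chart together with a K\"ahler potential $\phi$ satisfying $|\phi-r^2|<\Psi(\epsilon|n)$. Only with these charts in hand can one pass the line bundles to the limit, prove that $\phi_\infty-r^2$ is pluriharmonic (Claim~\ref{claim:pluriharmonic}), and carry out your transplantation. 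A second, smaller omission: the limiting flat bundle $(L,e^{r^2}h)$ over the non-simply-connected regular part of the cone may have nontrivial holonomy, so the ``constant'' peak section need not exist globally; the paper resolves this by replacing $L$ with $L^m$ for a suitable divisible $m$ so that all holonomy angles become nearly trivial, whereas your proposal only flags the issue without resolving it.
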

Tian~\cite{Tian90} conjectured this result under a positive lower bound for the Ricci curvature, with $L=K_M^{-1}$, and proved it in the two-dimensional case~\cite{Tian90a}. Donaldson-Sun~\cite{DS1} showed the result with two-sided Ricci curvature bounds, but arbitrary polarizations, and later several extensions of their result were obtained (see e.g.  \cite{CDS2,CDS3,Jiang16,Sz,PSS}). The result assuming a lower bound for the Ricci curvature, with $L=K_M^{-1}$ was finally shown by Chen-Wang~\cite{CW}. The improvement in our result is that we allow for general polarizations.

\medskip

Our next result addresses the structure of the singular set of the limit space $X$. In the setting of Theorem~\ref{thm:main}, if the metrics along the sequence are K\"ahler-Einstein, Donaldson-Sun~\cite{DS1} showed that the metric singular set of $X$ is the same as the complex analytic singular set of the corresponding projective variety (see also Corollary~\ref{cor:DSsingular}). In our setting this is not necessarily the case, however we have the following.

\begin{theorem}\label{thm:singularset}
  Let $(X,d)$ be a Gromov-Hausdorff limit as in Theorem~\ref{thm:main}.
 Then for any $\epsilon>0$, $X\setminus \mathcal{R}_\epsilon$ is contained in a finite union of analytic subvarieties of $X$. Furthermore, the singular set $X\setminus \mathcal{R}$ is equal to a countable union of subvarieties.
\end{theorem}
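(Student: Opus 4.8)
The plan is to read the metric singularities of $X$ off its algebraic tangent cones, combining lower semicontinuity of the volume density with the structure theory that underlies Theorem~\ref{thm:main}. For $p\in X$ write $m=2n$ and set $\Theta(p)=\omega_m^{-1}\lim_{r\to 0}r^{-m}\,\mathrm{Vol}(B(p,r))$; the limit exists by Bishop--Gromov and equals $\sup_{r>0}\mathrm{Vol}(B(p,r))/V(r)$ for the comparison balls $V(r)$, a supremum of functions lower semicontinuous in $p$, so $\Theta$ is lower semicontinuous on $X$, with $\Theta\le 1$ and $\Theta(p)=1$ exactly when $p\in\mathcal R$ (volume rigidity). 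Hence $X\setminus\mathcal R_\epsilon=\{p:\Theta(p)\le 1-c\epsilon\}$ is closed, where $c$ absorbs the passage between the two normalizations of the density. Since $\mathcal R=\bigcap_j\mathcal R_{1/j}$, the second assertion will follow from the first by a countable union over $j$; so it suffices to show that for each $\epsilon>0$ the set $X\setminus\mathcal R_\epsilon$ is contained in a proper subvariety of $X$.

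Next I would use the description of tangent cones for $X$ that (following Donaldson--Sun~\cite{DS1} and Chen--Wang~\cite{CW}, extended here to arbitrary polarizations) is part of the machinery behind Theorem~\ref{thm:main}: at every $p\in X$ there is a unique metric tangent cone $C_p$, a normal affine variety carrying a Ricci-flat K\"ahler cone metric, determined by a Donaldson--Sun valuation $v_p$ on $\mathcal O_{X,p}$ via $C_p=\operatorname{Spec}(\mathrm{gr}_{v_p}\mathcal O_{X,p})$, and $\Theta(p)$ equals the normalized volume of $C_p$; in particular $\Theta(p)<1$ if and only if $v_p$ is not equivalent to the order valuation $\mathrm{ord}$. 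For each integer $k$ let $N_k\subseteq X$ be the locus where the order-$\le k$ part of $v_p$ (equivalently, the $k$-jet data of $X$ at $p$ together with the filtration it induces, computed from the jets at $p$ of a fixed finite-dimensional space of sections of a power of $L$) already differs from that of a regular point. Two inputs are needed. First, \emph{finite-order detection}: there is $k=k(X,\epsilon)$ with $X\setminus\mathcal R_\epsilon\subseteq N_k$; I would prove this by contradiction, for otherwise there are $p_k$ with $\Theta(p_k)\le 1-c\epsilon$ whose order-$\le k$ valuation data is that of a regular point, while the Donaldson--Sun approximation scheme -- fed now by the partial $C^0$-estimate of Theorem~\ref{thm:partialc0} in place of two-sided Ricci bounds -- shows that $v_p$, hence $C_p$ and its volume, is determined to within an error tending to $0$ as $k\to\infty$ by its order-$\le k$ part, forcing $\Theta(p_k)\to 1$, a contradiction. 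Second, \emph{algebraicity}: $N_k$ is the jumping locus of an upper-semicontinuous algebraic invariant -- a Hilbert function of the associated graded, or the rank of a jet-evaluation map -- hence $N_k$ is a subvariety of $X$, and a fortiori a finite union of irreducible analytic subvarieties. Granting both, $X\setminus\mathcal R_\epsilon\subseteq N_{k(X,\epsilon)}$ proves the first assertion; and since $p$ is a metric singularity precisely when $v_p\neq\mathrm{ord}$, i.e. when $p\in N_k$ for some $k$, we get $X\setminus\mathcal R=\bigcup_{k\ge1}N_k$, a countable union of subvarieties, proving the second.

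The main obstacle I expect is the joint realization of the two inputs above: one must produce an invariant that (a) is already detected at a level $k$ depending only on $\epsilon$ (and $X$), (b) controls quantitatively how far the metric tangent cone is from flat $\mathbb C^n$, and (c) is genuinely upper semicontinuous in the Zariski topology, and one must do this under only a lower Ricci bound. Concretely, the uniform approximability of the Donaldson--Sun valuation, and of the volume of $C_p$, by bounded-degree data -- which in \cite{DS1} rests on two-sided curvature bounds -- must instead be extracted from the partial $C^0$-estimate of Theorem~\ref{thm:partialc0} together with volume rigidity and the monotonicity of volume ratios; once that quantitative core is in place, the remaining steps are the elementary semicontinuity statements in algebraic geometry and the reduction of the two assertions of the theorem to inclusions in the varieties $N_k$ carried out above.
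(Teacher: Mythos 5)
There is a genuine gap, and it lies at the heart of your proposed mechanism. You propose to detect metric singularities through the Donaldson--Sun valuation $v_p$ and the algebraic tangent cone $C_p=\operatorname{Spec}(\mathrm{gr}_{v_p}\mathcal O_{X,p})$, with $N_k$ cut out by $k$-jet data of $X$ at $p$. That machinery is built on two-sided Ricci bounds: it requires tangent cones to be unique, normal affine varieties carrying Ricci-flat K\"ahler cone metrics. Under only a lower Ricci bound none of this is available --- tangent cones are merely metric cones from Cheeger--Colding theory and need not be unique. More fundamentally, in this setting the metric singular set is \emph{not} determined by the complex-analytic (jet) data of $X$ at all: Proposition~\ref{prop-10} of the paper shows that a point which is complex-analytically regular can still be metrically singular, and that the obstruction is the Lelong number of the limiting Ricci current $\mathrm{Ric}=\sqrt{-1}\partial\bar\partial(v-u)$, a metric limit object that cannot be read off from jets of sections of $L$ or from the Hilbert function of any associated graded ring of $\mathcal{O}_{X,p}$. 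This is exactly why the singular set can be dense (see the remark after the theorem) while the variety $X$ is normal with small analytic singular locus. Your sets $N_k$ would therefore miss essentially all of the metric singular set, and your claimed equivalence ``$\Theta(p)<1$ iff $v_p\neq\mathrm{ord}$'' is false here; as a consequence both the containment $X\setminus\mathcal R_\epsilon\subseteq N_k$ and the asserted equality $X\setminus\mathcal R=\bigcup_k N_k$ fail.

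The paper's actual route replaces your algebraic invariant by an analytic one attached to the metric: it characterizes metric regularity as (complex-analytic regularity) $+$ (vanishing Lelong number of $\mathrm{Ric}$), proves via a three-circle/doubling argument for orthogonalized coordinate functions (Claims~\ref{cl-11} and~\ref{cl-12}) that a volume-density deficit $\epsilon$ forces the Lelong number to be at least a definite $c(\epsilon)>0$, and then invokes Siu's analyticity theorem (together with an extension of the current across the analytic singular set, Lemma~\ref{SiuLemma}) to conclude that the superlevel sets $H_c$ are analytic subvarieties. Upper semicontinuity of Lelong numbers, not Zariski semicontinuity of a Hilbert function, supplies the closedness, and the finite/countable union statements follow by taking $c=c(\epsilon)$ fixed, respectively $c\to 0$ along a sequence. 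Your opening reductions (lower semicontinuity of the density, $\mathcal R=\bigcap_j\mathcal R_{1/j}$) are fine, but the quantitative and algebraicity inputs you flag as ``the main obstacle'' are not merely difficult in your framework --- they are unobtainable from jet data alone, and must instead come from the current $\mathrm{Ric}$ and Siu's theorem.
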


The ``almost regular'' set $\mathcal{R}_\epsilon\subset X$ is defined to be the set of points $p$ satisfying $\lim\limits_{r\to 0} r^{-2n}\mathrm{Vol}(B(p, r))  > \omega_{2n} - \epsilon$ in terms of the volume $\omega_{2n}$ of the unit ball in $\mathbb{C}^n$. The regular set is then $\mathcal{R} = \cap_{\epsilon > 0} \mathcal{R}_\epsilon$. Note that $\mathcal{R}_\epsilon$ is an open set, while in general $\mathcal{R}$ may not be open.

 Cheeger-Colding~\cite{[CC2]} showed that even in the Riemannian setting the Hausdorff codimension of $X \setminus\mathcal{R}$ is at least $2$, with more quantitative estimates obtained by Cheeger-Naber~\cite{CN}. Moreover, in a recent deep work of Cheeger-Jiang-Naber~\cite{CJN}, it was shown that for small $\epsilon$ the set $X\setminus \mathcal{R}_\epsilon$ has bounded $(2n-2)$-dimensional Minkowski content and is $2n-2$ rectifiable.
These results show that the singular set behaves well from the perspective of geometric measure theory.
On the other hand, the topology of the singular set could be rather complicated. In a recent paper of Li and Naber \cite{LN} (see also example $3.2$ of \cite{CJN}), it was shown that even assuming non-negative sectional curvature, non-collapsed limit spaces can have singular sets that are Cantor sets. Our Theorem~\ref{thm:singularset} shows that in sharp contrast with this, in the polarized K\"ahler setting the singular set has strong rigidity properties. For example if we perturb the K\"ahler metrics along our sequence locally inside a holomorphic chart and assume that the geometric assumptions are preserved, then the metric singular set can change by at most a countable set of points.

\medskip

A basic technical ingredient in this work is the following result, which is of independent interest.
\begin{theorem}\label{thm:chart}
  There exists $\epsilon > 0$, depending on the dimension $n$, with the following property. Suppose that $B(p,\epsilon^{-1})$ is a relatively compact ball in a (not necessarily complete)  K\"ahler manifold $(M^n, p, \omega)$, satisfying $\mathrm{Ric}(\omega) > -\epsilon \omega$, and
  \[ d_{GH}\Big( B(p,\epsilon^{-1}), B_{\mathbb{C}^n}(0,\epsilon^{-1})\Big) < \epsilon, \]
where $d_{GH}$ is the Gromov-Hausdorff distance.
 Then there is a holomorphic chart $F : B(p,1) \to \mathbb{C}^n$ which is a $\Psi(\epsilon|n)$-Gromov-Hausdorff approximation to its image. In addition on $B(p,1)$ we can write $\omega = i\partial\bar\partial\phi$ with $|\phi - r^2| < \Psi(\epsilon|n)$, where $r$ is the distance from $p$.
\end{theorem}
Here, and throughout the paper, $\Psi(\epsilon_1,\ldots, \epsilon_k| a_1,\ldots, a_l)$ denotes a function such that for fixed $a_i$ we have $\lim_{\epsilon_1, \ldots, \epsilon_k\to 0} \Psi = 0$. This result is an extension of    Proposition $1.3$ of \cite{L2}, where the bisectional curvature lower bound was assumed. See also \cite[Proposition 1]{CDS3}, where a similar conclusion is found under additional assumptions.
A simple consequence of the result is that for any non-collapsed Gromov-Hausdorff limit of K\"ahler manifolds with Ricci curvature bounded below, the set $\mathcal{R}_\epsilon$ defined above has the structure of a complex manifold, for sufficiently small $\epsilon$.

We give some further applications of this result. The first, Proposition~\ref{prop:scalar}, shows that under Gromov-Hausdorff convergence to a smooth K\"ahler manifold, the scalar curvature functions converge as measures. Here we state a simple corollary of this.
\begin{cor}\label{cor:Sintsmall}
  Given any $\epsilon > 0$, there is a $\delta > 0$ depending on $\epsilon, n$, satisfying the following. Let $B(p,1)$ be a relatively compact unit ball in a K\"ahler manifold $(M,\omega)$ satisfying $\mathrm{Ric} > -1$, and $d_{GH}(B(p,1), B_{\mathbb{C}^n}(0,1)) < \delta$. Then $\int_{B(p,1/2)} S < \epsilon$, where $S$ is the scalar curvature of $\omega$.
\end{cor}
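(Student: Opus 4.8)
The plan is to deduce this from Proposition~\ref{prop:scalar} by a contradiction and compactness argument. Suppose the statement fails. Then there are $\epsilon_0>0$, K\"ahler manifolds $(M_i,\omega_i)$ with $\mathrm{Ric}(\omega_i)>-\omega_i$, and points $p_i\in M_i$ with $B(p_i,1)\Subset M_i$ and $d_{GH}(B(p_i,1),B_{\mathbb{C}^n}(0,1))\to 0$, while $\Vert S_i\Vert_{L^1(B(p_i,1/2))}\ge\epsilon_0$, where $S_i$ denotes the scalar curvature of $\omega_i$.

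First I would note that the interiors of the balls $B(p_i,3/4)$ converge in the pointed Gromov-Hausdorff sense to the flat ball $B_{\mathbb{C}^n}(0,3/4)$, which is a smooth K\"ahler manifold with vanishing scalar curvature; the sequence is moreover non-collapsing since the limit is Euclidean. This is precisely the setting of Proposition~\ref{prop:scalar}, which then says that the scalar curvature measures $S_i\,dV_{\omega_i}$ converge, on compact subsets of the limit, to $S_\infty\,dV_\infty=0$.

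The remaining point is to upgrade this convergence to the quantitative $L^1$ bound on $B(p_i,1/2)$, and this is where the lower Ricci bound is used. Tracing $\mathrm{Ric}(\omega_i)>-\omega_i$ gives $S_i>-n$, so that $(S_i+n)\,dV_{\omega_i}$ are nonnegative measures converging (on the interior) to $n\,dV_\infty$; combined with the volume convergence $dV_{\omega_i}\to dV_\infty$ of Cheeger-Colding and the vanishing of the limiting masses on spheres, one controls the total masses over $B(p_i,r)$ for all $r<3/4$. Since the negative part $(S_i)_-$ is uniformly bounded by $n$ while no positive mass of $S_i\,dV_{\omega_i}$ can escape into a $dV_\infty$-null set, both $\int_{B(p_i,1/2)}(S_i)_+\,dV_{\omega_i}$ and $\int_{B(p_i,1/2)}(S_i)_-\,dV_{\omega_i}$ tend to zero, so $\Vert S_i\Vert_{L^1(B(p_i,1/2))}\to 0$, contradicting the choice of the sequence.

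The hard part is exactly this last upgrade: a one-sided curvature bound does not in itself prevent the scalar curvature from oscillating with bounded amplitude while the metrics converge, so the argument rests on the mechanism of Proposition~\ref{prop:scalar}. That proposition is proved via the holomorphic chart of Theorem~\ref{thm:chart}, in which $S_i=-\Delta_{\omega_i}\log\det(g_i)$ and, by $\mathrm{Ric}(\omega_i)>-\omega_i$ again, $-\log\det(g_i)$ is $\omega_i$-plurisubharmonic; it is this plurisubharmonicity together with the $C^0$-closeness of $g_i$ to the flat metric in the chart that rules out oscillation and produces genuine $L^1$ control rather than mere weak convergence of measures.
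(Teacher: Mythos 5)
Your overall strategy---argue by contradiction, pass to the flat limit, and invoke Proposition~\ref{prop:scalar}---is exactly the route the paper gestures at (it gives no further details, calling this a ``simple corollary''), but the step you yourself flag as the hard part is a genuine gap, and the justification you offer for it does not work. Proposition~\ref{prop:scalar} only gives weak convergence of the signed measures $S_i\,\omega_i^n$ to $0$ on the flat ball, i.e.\ $\int_{B(q_i,r)}S_i\,\omega_i^n\to0$ for each fixed center and radius. Since $\int|S_i|=\int S_i+2\int (S_i)_-$, the corollary is equivalent to showing in addition that $\int_{B(p_i,1/2)}(S_i)_-\,\omega_i^n\to0$ (note also that tracing $\mathrm{Ric}>-\omega_i$ over the $2n$ real dimensions gives $S_i>-2n$, not $-n$). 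Your argument for this---that $(S_i)_-$ has bounded density while ``no positive mass can escape into a $dV_\infty$-null set''---does not address the actual failure mode, which is cancellation between $(S_i)_+$ and $(S_i)_-$ at spatial scales shrinking with $i$. If $\mu_i=(S_i)_+\omega_i^n$ and $\nu_i=(S_i)_-\omega_i^n$, weak convergence of $\mu_i-\nu_i$ to $0$ together with $\nu_i\le 2n\,\omega_i^n$ only forces $\mu_i$ and $\nu_i$ to share the same weak limit $\nu$ with $\nu\le 2n\,dV_\infty$; it does not force $\nu=0$. Because $\mu_i$ and $\nu_i$ are mutually singular for each $i$ (they are carried by $\{S_i>0\}$ and $\{S_i<0\}$), the total variation of $\mu_i-\nu_i$ equals $\mu_i(B)+\nu_i(B)$ and can stay bounded away from zero even as the signed measures tend weakly to zero.

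The same objection defeats your closing appeal to plurisubharmonicity: knowing that $v_i=\log|s_i|^2+2\pi u_i$ is psh, that $\Delta v_i=2\pi(S_i+2n)\ge0$, and that $v_i$ converges in $L^1$ to a smooth limit (Lemma~\ref{lm1}) again yields only weak convergence of the positive measures $\Delta v_i\,\omega_i^n$, not convergence in total variation to $4\pi n\,\omega_i^n$. A model for the obstruction: on a Euclidean ball take $f_i=i^{-2}\sin(ix_1)$; then $f_i\to0$ uniformly, $\Delta f_i=-\sin(ix_1)$ is bounded below, $\int_{B(q,r)}\Delta f_i\to0$ for every ball, and yet $\int|\Delta f_i|$ stays bounded away from zero. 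The geometric version of this is not obviously excluded either: one can build rotationally symmetric perturbations of the flat disc, supported on $O(\rho^{-2})$ discs of radius $\rho$, with Gauss curvature oscillating in $[-1/4,C]$, each perturbation $(1+O(\rho^2))$-bi-Lipschitz to the flat metric (so the GH hypothesis holds with $\delta\to0$) while $\int|K|$ remains of order one. So the corollary cannot be extracted from the statement of Proposition~\ref{prop:scalar} by soft measure-theoretic reasoning; whatever proof one gives must produce a direct, quantitative bound on $\int (S_i)_-$ (equivalently, a total-variation rather than weak estimate on $(S_i+2n)\omega_i^n-2n\,\omega_i^n$), and that ingredient is missing from your argument.
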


When the manifold is polarized, non-collapsed, with Ricci curvature bounded below, then we obtain the following integral bound for the scalar curvature on any unit ball.
\begin{prop}\label{prop:Sintbdd}
 Let $B(p,1)$ be a unit ball in a polarized K\"ahler manifold $(M^n, L, \omega)$, satisfying $\mathrm{Ric} > -1$, and $\mathrm{vol}(B(p,1)) > v > 0$. There is a constant $C(n,v)$ depending on $n, v$ such that $\int_{B(p,1)} S < C(n,v)$.
\end{prop}
This is closely related to a question posed by Yau~\cite[Problem 9. p. 278]{Yau92}, on bounding the integral of scalar curvature on Riemannian manifolds with non-negative Ricci curvature. An argument similar to \cite[Proposition 2.7]{L4} shows that under a bisectional curvature lower bound the same result holds even without the polarization condition. See also Petrunin~\cite{Pet08} for an analogous result, where the sectional curvature is assumed to be bounded below, but non-collapsing is not required.

The final application is the following, which was proved previously by the first author~\cite{L2} under the assumption of non-negative bisectional curvature.
\begin{prop}\label{prop:Cn}
  There exists $\epsilon>0$ depending on $n$, so that if $M^n$ is a complete noncompact K\"ahler manifold with $\mathrm{Ric}\geq 0$ and $\lim\limits_{r\to\infty} r^{-2n}\mathrm{vol}(B(p, r))\geq \omega_{2n}-\epsilon$, then $M$ is biholomophic to $\mathbb{C}^n$. Here $\omega_{2n}$ is the volume of the Euclidean unit ball.
\end{prop}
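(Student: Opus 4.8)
The plan is to combine the holomorphic chart result (Theorem~\ref{thm:chart}) with a standard Cheeger-Colding type argument to build a global biholomorphism onto $\mathbb{C}^n$. First, under the volume growth hypothesis $\lim_{r\to\infty} r^{-2n}\mathrm{vol}(B(p,r)) \geq \omega_{2n} - \epsilon$, the Bishop-Gromov inequality forces the density ratio $r^{-2n}\mathrm{vol}(B(x,r))$ to be at least $\omega_{2n} - \epsilon$ for \emph{all} $x \in M$ and \emph{all} $r > 0$; combined with $\mathrm{Ric} \geq 0$, a pointed Gromov-Hausdorff compactness argument shows that for $\epsilon$ small, every ball $B(x, r)$ is $\delta(\epsilon)$-Gromov-Hausdorff close to the Euclidean ball $B_{\mathbb{C}^n}(0, r)$, with $\delta(\epsilon) \to 0$ as $\epsilon \to 0$. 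In particular, the unique tangent cone at infinity, as well as every tangent cone, is $\mathbb{C}^n$, and $M$ itself (being a noncollapsed limit of its own rescalings, essentially) is already close to $\mathbb{C}^n$ at all scales.

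Next I would invoke Theorem~\ref{thm:chart}: for $\epsilon$ small enough that the closeness parameter $\delta$ falls below the threshold in that theorem, each ball $B(x, r) \subset M$ admits a holomorphic chart that is a near-isometry onto an open subset of $\mathbb{C}^n$ containing a definite fraction of the Euclidean ball. The issue is to patch these local charts into a single global biholomorphism. The natural approach is to fix the center $p$, take the chart $F_R$ on $B(p, R)$ for each large $R$, suitably normalized (e.g.\ so that $F_R(p) = 0$ and $dF_R$ is close to a fixed unitary frame at $p$), and let $R \to \infty$. Uniform estimates on these charts — which come from the proof of Theorem~\ref{thm:chart}, since the charts are built from $L^2$-holomorphic functions with gradient estimates — give subconvergence on compact sets to a holomorphic map $F: M \to \mathbb{C}^n$ which is a local biholomorphism everywhere and whose image is all of $\mathbb{C}^n$ (using that $F$ is proper, or that the image contains balls of every radius around $0$). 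Injectivity of $F$ is the delicate point: one shows that if $F(x) = F(y)$ with $x \neq y$, then by rescaling the picture around $x$ and $y$ one contradicts the fact that on a large ball the chart is an \emph{embedding} (a near-isometry onto its image), which is part of the conclusion of Theorem~\ref{thm:chart} once the ball is sufficiently Euclidean.

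The main obstacle I expect is precisely this gluing and injectivity argument — establishing that the locally-defined holomorphic charts can be chosen compatibly (or that the limit map is globally one-to-one), rather than just producing a local biholomorphism or a finite covering of $\mathbb{C}^n$. In the non-negative bisectional curvature setting of \cite{L2} one has extra tools (e.g.\ plurisubharmonic exhaustion functions, the structure of the ring of polynomial-growth holomorphic functions) that are not available here, so the argument must rely on the quantitative near-isometry in Theorem~\ref{thm:chart} at every scale and a monotonicity/uniqueness-of-tangent-cone input to control the overlaps. A secondary technical point is to ensure the normalizations of $F_R$ are uniform in $R$ so that the limit $F$ exists and is nondegenerate; this should follow from the uniform gradient and Hessian bounds on the holomorphic functions used to build the charts, together with the fact that the metric on $M$ is uniformly close to the flat metric on each $B(p, R)$.
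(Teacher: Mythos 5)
Your first step (Bishop--Gromov forces all balls at all scales to be almost Euclidean, so after rescaling Theorem~\ref{thm:chart} applies on $B(p,R)$ for every $R$) is correct and is implicitly used in the paper as well. The gap is in the passage from these scale-by-scale charts to a single globally injective map. The chart $F_R$ produced by Theorem~\ref{thm:chart} on $B(p,R)$ is a $\Psi(\epsilon|n)$-Gromov--Hausdorff approximation \emph{with respect to the rescaled metric} $R^{-2}g$; translated back to the original scale the separation estimate reads $\bigl|\,|F_R(x)-F_R(y)| - d(x,y)\bigr| < R\,\Psi(\epsilon|n)$, so for two fixed points $x\neq y$ the quantitative injectivity is lost as soon as $R \gg d(x,y)/\Psi(\epsilon|n)$. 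The charts at different scales are genuinely different maps with no built-in compatibility, and normalizing $F_R(p)=0$ and $dF_R(p)$ does not create any. Injectivity of a subsequential limit cannot be deduced from injectivity of each $F_R$, and your rescaling argument around a pair with $F(x)=F(y)$ only shows that \emph{some} chart at \emph{some} finite scale separates $x$ and $y$, not that the limit map does. Properness and surjectivity of the limit are likewise asserted rather than derived.

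The paper avoids exactly this difficulty by following Theorem 4.1 of \cite{L2}: one constructs global holomorphic functions $v_1,\dots,v_n$ of controlled, almost linear growth using the three circle theorem of Donaldson--Sun \cite{DS2} (replacing the bisectional-curvature three circle theorem of \cite{L2}); it is this growth control that makes $(v_1,\dots,v_n)$ proper. The remaining issue is non-degeneracy: one must show $dv_1\wedge\cdots\wedge dv_n$ never vanishes. The paper argues that otherwise it vanishes identically (being a limit of nowhere-vanishing $n$-forms), the image then has dimension at most $n-1$, and properness forces a compact positive-dimensional subvariety --- impossible because Theorem~\ref{thm:chart} provides K\"ahler potentials on arbitrarily large balls. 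Your proposal contains no substitute for either the three circle theorem (needed for global functions with growth control, hence properness) or the compact-subvariety argument (needed to rule out branching), so as written the proof does not go through.
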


We conclude this introduction with a brief description of the contents of the paper. In Section~\ref{sec:regular} we prove Theorem~\ref{thm:chart} and the two applications mentioned above. We then use the charts provided by Theorem~\ref{thm:chart} in Section~\ref{sec:partialc0} to construct global holomorphic sections of high powers of our line bundles, following the approach of Donaldson-Sun~\cite{DS1}. This leads to the partial $C^0$ estimate, and the proof of Theorem~\ref{thm:main}. In Section~\ref{sec:singularities} we study the relation between the complex analytic and metric singularities of $X$, proving Theorem~\ref{thm:singularset}.  The argument in Section~\ref{sec:partialc0} uses the recent estimates of Cheeger-Jiang-Naber~\cite{CJN}, but we show in the Appendix that our results can be obtained independently of \cite{CJN} by following the approach of Chen-Donaldson-Sun~\cite{CDS2}. In addition we prove a splitting result in the Appendix which is well known to experts but does not seem to be written up in the setting that we use.

\medskip

\begin{center}
\bf  {\quad Acknowledgments}
\end{center}
We would like to thank Jeff Cheeger and John Lott for their interest in this work. We also thank Aaron Naber for many fruitful discussions, as well as Richard Bamler and Peter Topping for helpful suggestions. Special thanks also goes to Yum-Tong Siu for the proof of Lemma \ref{SiuLemma}.

\section{Holomorphic charts near regular points}\label{sec:regular}
Our main goal in this section is to prove Theorem~\ref{thm:chart}.
We will first construct a holomorphic chart by regularizing the metric using Perelman's pseudolocality~\cite{Per} along the Ricci flow. The following is the basic input about the Ricci flow that we need.

\begin{prop}\label{prop:Ricflow1} There is a constant $D > 0$ such that given $a > 0$, for sufficiently small $\epsilon > 0$ the following holds. Let $B(p,\epsilon^{-1})$ be a relatively compact ball in a K\"ahler manifold $(M^n,g)$ satisfying $\mathrm{Ric}(g) > -\epsilon g$, such that
  \[d_{GH}(B(p,\epsilon^{-1}), B_{\mathbb{C}^n}(0,\epsilon^{-1})) < \epsilon.\]
Then there is a Ricci flow solution $g(t)$ on $B(p,\epsilon^{-1/2})$ for $t\in [0,1]$ with $g(0)=g$, such that
  \begin{itemize}
  \item On the ball $B_{g(1)}(p,10D)$, in suitable coordinates, we have
    \[ |g(1) - g_{Euc}|_{C^5(g_{Euc})} < \Psi(\epsilon|n); \]
    \item The curvature along the flow satisfies $|\mathrm{Rm}|< a/t$;
    \item We have the following estimates for the distance function along the flow:
      \[ \begin{aligned} d_t(x,y) &> d_0(x,y) - D\sqrt{t}, \\
          d_t(x,y) &< D(d_0(x,y) +  \sqrt{t}),
        \end{aligned}\]
      for $x,y\in B(p,\epsilon^{-1/2}/2)$ and $t \in [0,1]$.
    \end{itemize}
  \end{prop}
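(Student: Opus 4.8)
The plan is to produce $g(t)$ as a localized solution of the Ricci flow started from $g$, and to read off all three conclusions from Perelman's pseudolocality together with standard Ricci flow estimates. The first ingredient is a volume/isoperimetric input: by the volume continuity of Cheeger--Colding~\cite{[CC2]}, the Gromov--Hausdorff hypothesis together with $\mathrm{Ric}(g)>-\epsilon g$ forces $\mathrm{vol}(B(x,r))\ge(1-\Psi(\epsilon|n))\omega_{2n}r^{2n}$ for every $x\in B(p,\epsilon^{-1}/2)$ and every $r$ up to a fixed large scale; combined with the Ricci lower bound this yields an almost-Euclidean isoperimetric profile at such scales, which is precisely the hypothesis of pseudolocality (in the form used in \cite{CW}).

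Next, existence of the flow. Since $B(p,\epsilon^{-1})$ is relatively compact, $g$ has bounded geometry there; multiplying it by a cutoff and gluing in a complete model gives complete metrics of bounded curvature to which Shi's theorem applies, and passing to a limit of these flows (or invoking a local existence statement in the spirit of \cite{CW}) produces a Ricci flow $g(t)$, $t\in[0,T]$, on $B(p,\epsilon^{-1/2})$ with $g(0)=g$, automatically Kähler. Applying pseudolocality at the \emph{large} scale $r_0=\epsilon^{-1}$ at each point of $B(p,\epsilon^{-1/2})$ then gives $|\mathrm{Rm}|_{g(t)}<a/t$ on $B(p,\epsilon^{-1/2})$ for $t\in(0,\min(T,1)]$, once $\epsilon$ is small in terms of $a$: scale invariance of pseudolocality converts "$\Psi(\epsilon|n)$-close to Euclidean at scale $\epsilon^{-1}$" into a curvature bound valid up to time $1$ (indeed up to time $\sim\epsilon^{-2}$) on $B(p,\epsilon^{-1/2})$, with the additive error term $O(\epsilon^2)$ negligible. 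In particular the curvature is bounded away from $t=0$, so the flow continues to $t=1$ on the smaller ball.

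For the distance estimates, the point is that $|\mathrm{Rm}|_{g(t)}<a/t$ is integrable in time, since $\int_0^1 t^{-1/2}\,dt<\infty$. Combining this with Perelman's distance distortion lemma applied at scale $r_0\sim\sqrt t$ at time $t$, together with the Ricci lower bound on $g(0)$ to handle $t$ near $0$, gives $d_t(x,y)>d_0(x,y)-D\sqrt t$ and $d_t(x,y)<D(d_0(x,y)+\sqrt t)$ for $x,y\in B(p,\epsilon^{-1/2}/2)$ and $t\in[0,1]$, with $D$ universal. Finally, at $t=1$ the curvature bound $|\mathrm{Rm}|_{g(1)}<a$ near $p$ is promoted by Shi's derivative estimates to bounds on all covariant derivatives of curvature, and with the volume lower bound this gives uniform $C^5$ control of $g(1)$ on a definite ball around $p$. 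To see it is $\Psi(\epsilon|n)$-close to $g_{Euc}$ there, argue by contradiction: a sequence $\epsilon_j\to0$ violating this yields, after passing to a subsequence, Ricci flows $g_j(t)$ converging smoothly on $(0,1]$ to a flow $g_\infty(t)$, and the distance estimates let one pass to the limit as $t\to0$ to see that $(B,d_{g_\infty(t)})$ converges to a Euclidean ball; choosing the pseudolocality parameters $a_j\to0$ along the sequence (allowed since $\epsilon_j\to0$) forces $g_\infty(t)$ to be flat for all $t\in(0,1]$, hence equal to $g_{Euc}$, a contradiction.

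I expect the genuinely delicate steps to be (i) local existence of the flow up to a definite time in the incomplete setting, where one has no a priori control outside $B(p,\epsilon^{-1})$ and so must anchor a continuation argument on the pseudolocality bound rather than naively iterate Shi's theorem, and (ii) making the distance distortion estimates precise near $t=0$, where the curvature bound $a/t$ degenerates; the rigidity in the last step is conceptually the heart but follows from known facts once one sends the pseudolocality parameter to zero along the contradiction sequence.
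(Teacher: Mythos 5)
Your proposal is correct and follows essentially the same route as the paper: almost-Euclidean isoperimetric inequality from the hypotheses, local existence of the flow anchored on pseudolocality (the paper uses the conformal completion of He, following Hochard and Topping, rather than a cutoff-and-glue construction, but the idea is the same), the $a/t$ curvature bound, Perelman-type distance distortion at scale $\sqrt{t}$, and a compactness/contradiction argument for the $C^5$ closeness of $g(1)$ to the flat metric. The one step you treat too casually is the passage from almost-maximal volume plus a Ricci lower bound to the almost-Euclidean isoperimetric inequality: this is not elementary, and is precisely the theorem of Cavalletti--Mondino that the paper invokes.
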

  \begin{proof}
    According to Cavalletti-Mondino~\cite{CM17}, our assumptions imply that the $\Psi(\epsilon|n)$-almost Euclidean isoperimetric inequality holds in balls of radius $\epsilon^{-1/2}$ inside the ball $B(p,\epsilon^{-1})$. We can then apply He~\cite[Lemma 2.4]{He17} (see also Hochard~\cite{Hoch} and Topping~\cite{To}) to conformally scale the metric $g$ on a domain $U\subset B(p,\epsilon^{-1})$ to a complete Riemannian metric $(U, h)$, such that $g=h$ on $B(p, \epsilon^{-1/2})$, the $\Psi(\epsilon|n)$-almost Euclidean isoperimetric inequality holds on balls of radius $\epsilon^{-1/2}/8$, and we have a lower bound $S_h \geq -\Psi(\epsilon|n)$ for the scalar curvature of $h$. As in \cite{He17} there exists a Ricci flow solution $h(t)$ for a definite time $t\in [0,T]$, satisfying $|\mathrm{Rm}| \leq A/t$ for $t\in (0,T]$. We can choose $A$ arbitrarily small, and $T$ as large as we like if $\epsilon$ is sufficiently small. The distance estimates follow from Lemma 3.5 and Lemma 3.7 in \cite{He17}.

To see the claim about comparing $g(t)$ with the Euclidean metric, suppose we have a sequence of such flows, with $\epsilon_i \to 0$. The curvature estimates, and the non-collapsing estimate (see Lemma 3.1 in \cite{He17}) applied for large times, imply that in the limit we end up with a (stationary) flow of flat metrics on $\mathbb{R}^{2n}$, and the claim follows from this.
\end{proof}

We need the following estimate along a Ricci flow as above, similar to Lemma 2.3 in Huang-Tam~\cite{HT15}.
\begin{lemma}\label{lem:HT}
  Suppose that we have a Ricci flow on $B(p,\epsilon^{-1/2})$ as given by Proposition~\ref{prop:Ricflow1}, for sufficiently small $\epsilon > 0$. Given constants $A, A_k, l > 0$ there are $C_k > 0$ satisfying the following. Let $f\geq 0$ be a smooth function on $B(p,\epsilon^{-1/2})\times [0,1]$, such that
  \begin{itemize}
  \item for all $k > 0$ $f$ satisfies
    \[ (\partial_t - \Delta) f(x,t) \leq \frac{A}{t}\,\max_{0 \leq s \leq t} f(x, s) + A_kt^k, \]
    on $B(p,\epsilon^{-1/2})\times(0,1)$.
  \item $\partial_t^k f |_{t=0} = 0$ for all $k\geq 0$ on $B(p,\epsilon^{-1/2})$.
  \item $\sup_{x\in B(p,\epsilon^{-1/2})} f(x,t) \leq A t^{-l}$ for $t\in (0,1]$.
  \end{itemize}
    Then we have $\sup_{x\in B(p,1)} f(x,t) \leq C_kt^k$ for all $k \geq 0$, and $t\in [0, 1]$.
  \end{lemma}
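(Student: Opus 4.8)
The plan is to prove, by induction on $k$, the statement that $\sup_{x \in B(p,1)} f(x,t) \le C_k t^k$ for a suitable constant $C_k$. The base case is essentially hypothesis (iii), which gives a bound $f \le A t^{-l}$; to get a genuine nonnegative power I would first run the argument once to improve $t^{-l}$ to, say, $t^0$ (a bound by a constant), and then bootstrap. The engine of the induction is a maximum-principle / barrier argument on parabolic cylinders of shrinking radius. Suppose we have already established $f(x,s) \le C_j s^j$ on $B(p, 1 + 2^{-j})$ for $s \in [0,1]$ (with radii shrinking along the induction so that we always stay inside $B(p, \epsilon^{-1/2})$ where the flow and the differential inequality are valid). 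Then on $B(p, 1+2^{-j-1})$ the differential inequality becomes
\[
 (\partial_t - \Delta) f \le \frac{A}{t}\, C_j t^j + A_{j+1} t^{j+1} = A C_j t^{j-1} + A_{j+1} t^{j+1},
\]
so the forcing term is $O(t^{j-1})$. One then wants to conclude $f \le C_{j+1} t^{j+1}$ on the smaller ball. This is a standard localized parabolic estimate: multiply by a cutoff $\eta$ supported in $B(p,1+2^{-j-1})$ that is $1$ on $B(p,1+2^{-j-2})$, and apply the maximum principle to $\eta f - C_{j+1} t^{j+1} - (\text{correction term from } \Delta\eta, \nabla\eta)$. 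The spatial derivatives of $\eta$ are controlled because the metrics $g(t)$ are uniformly equivalent on the relevant region and have bounded geometry there (this is where one uses the curvature bound $|\mathrm{Rm}| \le a/t$ together with, after a short initial time, actual curvature bounds; alternatively one uses the estimates from \cite{He17} quoted in Proposition~\ref{prop:Ricflow1}). The vanishing hypothesis (ii), $\partial_t^k f|_{t=0} = 0$ for all $k$, guarantees that $f$ and all its time-derivatives vanish at $t=0$, so no boundary contribution at $t=0$ obstructs raising the power; combined with the interior-in-space cutoff this closes the induction step.

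A cleaner way to organize the barrier is to look for a supersolution of the form $\psi(x,t) = (\text{const}) \cdot \rho(x)^{-2} t^{j+1} + (\text{small})$, or more simply to use the ODE comparison: if $F(t) := \sup_{B(p, r(t))} f(\cdot, t)$ with $r(t)$ slowly decreasing, then the Hamilton–type trick of applying the maximum principle at the point where $\eta f$ is maximized yields a differential inequality for $F$ of the form $F'(t) \le \frac{A}{t} \max_{[0,t]} F + (\text{lower order})$ plus cutoff errors; integrating from $0$ using $F(0)=0$ and the current inductive bound gives the improved power. The role of the $\max_{0\le s\le t} f(x,s)$ on the right-hand side (rather than just $f(x,t)$) is mild: since by induction $f(x,s) \le C_j s^j \le C_j t^j$ for $s \le t$, the max is already controlled by the value at time $t$ up to a constant, so it causes no trouble. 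I would also note the uniformity of $C_k$: it depends only on $A, A_k, l, k$ (and $n$, via the geometry of the flow in Proposition~\ref{prop:Ricflow1}), not on the individual manifold — this follows because every estimate used (curvature bounds, distance distortion, non-collapsing, cutoff construction) is uniform by Proposition~\ref{prop:Ricflow1}.

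The main obstacle I anticipate is \emph{handling the cutoff errors so that they do not degrade the power of $t$ one gains at each step}. When one localizes, terms like $f \Delta \eta$ and $\nabla f \cdot \nabla \eta$ appear; naively $\nabla f$ is only controlled by interpolation, and $\Delta\eta$ could blow up as $t\to 0$ if expressed in the $g(t)$-metric. The resolution is that the construction is set up on a \emph{fixed} ball in the initial metric $g=g(0)$, on which the distance distortion estimates of Proposition~\ref{prop:Ricflow1} give two-sided control of $d_t$ versus $d_0$, so a cutoff fixed with respect to $d_0$ has $g(t)$-Laplacian bounded by a constant independent of $t$ (after the short time $t \lesssim 1$, using $|\mathrm{Rm}| \le a/t$ and parabolic smoothing to bound $|\nabla^2 \eta|$, or simply because $\eta$ can be chosen smooth and $g(t)$ stays uniformly equivalent to $g(0)$ with bounded derivatives on the region of interest). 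The gradient term is absorbed via Cauchy–Schwarz into the $\Delta$ term on the left and a harmless multiple of $f$, which is then swept into the induction. Once this bookkeeping is done carefully, the inductive step goes through and the lemma follows. This is precisely the structure of Lemma 2.3 in Huang–Tam~\cite{HT15}, adapted to the weaker, only-lower-Ricci-bounded setting furnished by Proposition~\ref{prop:Ricflow1}.
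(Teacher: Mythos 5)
Your strategy---an induction on the power of $t$, gaining one power per step via localized parabolic estimates---is genuinely different from the paper's, which runs a single maximum-principle argument on the weighted quantity $H=\theta(t)\Psi(x)f(x,t)t^{-K}$ with $\theta(t)=\exp(-\alpha t^{1-\beta})$ and $\Psi=\phi(\rho)^m$. Unfortunately the inductive step does not close. Once you replace $\frac{A}{t}\max_{s\le t}f(x,s)$ by its inductive bound $AC_jt^{j-1}$, the best conclusion obtainable from $(\partial_t-\Delta)f\le AC_jt^{j-1}+A_{j+1}t^{j+1}$ together with $f(\cdot,0)=0$ is $f\le \frac{AC_j}{j}\,t^{j}$: the spatially constant supersolution $\frac{AC_j}{j}t^{j}$ saturates this inequality, so you recover the power $t^{j}$ you started with, not $t^{j+1}$. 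The hypothesis that all $\partial_t^kf|_{t=0}=0$ does not repair this, since $f'$ can be comparable to $AC_jt^{j-1}$ on $[\delta,1]$ for arbitrarily small $\delta$ while every derivative still vanishes at $t=0$. The gain in the power of $t$ must come from retaining the $f$-dependence on the right-hand side rather than bounding it away: setting $F=ft^{-K}$, the term $-\frac{K}{t}F$ produced by differentiating $t^{-K}$ dominates $\frac{A}{t}\max_{s\le t}F$ as soon as $K>A$ (the $\max$ over $s\le t$ being reduced to the value at time $t$ by the decreasing factor $\theta$), and this is exactly the one-shot mechanism of the paper's proof. Your formulation discards that self-improving structure at the first step, so no amount of iteration raises the exponent.

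Second, your treatment of the cutoff errors relies on a fact that is not available here. You assert that a cutoff fixed with respect to $d_0$ has $g(t)$-Laplacian bounded independently of $t$ because $g(t)$ stays uniformly equivalent to $g(0)$ ``with bounded derivatives.'' But $g(0)$ carries no curvature control beyond $\mathrm{Ric}>-\epsilon g$, and along the flow one only has $|\mathrm{Rm}|\le a/t$; this is why the paper builds its cutoff from $d_{g(1)}$ and must accept the blow-up rates $|\nabla_{g(t)}\rho|\le Ct^{-ca}$ and $|\Delta_{g(t)}\rho|\le Ct^{-1/2-ca}$ as $t\to 0$. Absorbing these singular errors is the main technical content of the proof: one takes $\Psi=\phi^m$ with $m$ large so that the error terms enter as $(\Psi F)^q$ and $(\Psi F)^{2q-1}$ with $q<1$, and one arranges for the linear term $-\alpha(1-\beta)t^{-\beta}\Psi F$ coming from $\theta'$ to dominate them after multiplying through by $t^{\beta}$. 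Neither device appears in your outline, and without them (or a substitute) the localized maximum principle you invoke does not go through.
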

  \begin{proof}
    The proof is similar to the first part of the proof of Huang-Tam~\cite{HT15}, Lemma 2.3.
We set $\phi(s)$ to be a cutoff function such that $\phi(s)  = 0$ for $s \geq 3/4$, and $\phi(s) = 1$ for $s \leq 1/4$. Let $\Phi = \phi^m$ for suitable $m$ to be chosen later, and we set $q = 1-2/m$. This satisfies
    \[ 0 \leq \Phi \leq 1, \quad -C_m \Phi^q \leq \Phi' \leq 0, \quad |\Phi''| \leq C_m \Phi^q, \]
    for a constant $C_m$ depending on $m$.  Let us define $\rho:B_{g(1)}(p,9D) \to \mathbb{R}$ to be given by $\rho(x) = d_{g(1)}(p,x)^2/(9D)^2$. Then $|\nabla_{g(1)}\rho|, |\Delta_{g(1)}\rho| < C$, and so as in Lemma 2.2 in Huang-Tam~\cite{HT15}, we have
$ |\nabla_{g(t)}\rho| < Ct^{-ca},  |\Delta_{g(t)} \rho| < Ct^{-1/2 - ca}$
    for a dimensional constant $c$ (the constant $a$ is the constant in the estimate $|\mathrm{Rm}| < a/t$). We choose $\epsilon$ sufficiently small, so that $a$ satisfies $ca < 1/4$. Then
    \[ |\nabla_{g(t)}\rho| < Ct^{-1/4}, \quad |\Delta_{g(t)} \rho| < Ct^{-3/4}. \]

  We then set $\Psi(x) = \Phi(\rho(x))$, so that by definition $\Psi$ vanishes outside of $B_{g(1)}(p,9D)$ for all $t$. We also let $\theta(t) = \mathrm{exp}(-\alpha t^{1-\beta})$ for $\alpha > 0, \beta\in (0,1)$. For any $K > A$, let $F = ft^{-K}$. Then we have a constant $C$ (which may change from line to line) such that
 \[ \begin{aligned}
     (\partial_t - \Delta) F(x,t) &\leq -K t^{-1-K}f(x,t)  + \frac{A}{t} \max_{s\leq t} t^{-K}f(x,s)  + A_kt^{k-K}\\
     &\leq -\frac{A}{t} F(x,t) + \frac{A}{t} \max_{s\leq t} F(x,s) + C,
\end{aligned}
   \]
    and in addition $F \leq A t^{-l-K}$. We will show that the smooth function
    \[ H(x,t) = \theta(t) \Psi(x) F(x,t), \]
    is a priori bounded on $B(p,\epsilon^{-1/2})\times [0,1]$.
    Suppose that $H$ achieves its maximum at a point $(x_0, t_0)$.

    At the maximum we have $\nabla H = 0$, therefore $\Psi\nabla F + F\nabla\Psi = 0$, and so
    \[ \nabla\Psi\cdot \nabla F = -\frac{F |\nabla \Psi|^2}{\Psi}. \]
    Note that by the estimates above we have
    \[ |\Delta\Psi| = \big|\Phi''(\rho) |\nabla\rho|^2 + \Phi'(\rho) \Delta\rho \big| \leq C_m\Psi^q t_0^{-3/4}, \]
    and also
    \[ \frac{|\nabla\Psi|^2}{\Psi} \leq C_m\Psi^{2q-1} t_0^{-1/2}. \]
    In addition, note that by the maximality of $H$ at $(x_0,t_0)$, for any $s\leq t_0$ we have
    \[ \theta(s)F(x_0,s) \leq \theta(t_0)F(x_0, t_0), \]
    and so since $\theta$ is decreasing, we have
    $\max_{s\leq t_0} F(x_0,s) \leq F(x_0, t_0)$. It follows that at $(x_0, t_0)$
    \[ (\partial_t - \Delta) F \leq C. \]

    At the maximum we can then compute
    \[ \begin{aligned}  \Delta H &= \theta F \Delta\Psi  + \theta\Psi \Delta F + 2\theta \nabla\Psi\cdot \nabla F \\
        &\geq -C_m \theta F\Psi^q t_0^{-3/4} -C_m \theta F \Psi^{2q-1} t_0^{-1/2} + \theta\Psi \Delta F,
      \end{aligned} \]
    and
    \[ \partial_t H = \theta\big( -\alpha(1-\beta)t_0^{-\beta} \Psi F + \Psi \partial_t F\big). \]
    It follows that
    \[ \begin{aligned}
        0 &\leq (\partial_t - \Delta) H \\
        &\leq  \theta\Psi (\partial_t - \Delta)F + \theta\big[ -\alpha(1-\beta) t_0^{-\beta} \Psi F + C_m \Psi^q F t_0^{-3/4} + C_m \Psi^{2q-1} F t_0^{-1/2}\big] \\
        &\leq \theta\big[ C\Psi -\alpha(1-\beta) t_0^{-\beta} \Psi F \\
        &\qquad\qquad + C_m (\Psi F)^q  t_0^{-3/4-(1-q)(l+K)} + C_m (\Psi F)^{2q-1}  t_0^{-1/2-(2-2q)(l+K)}\big],
      \end{aligned} \]
    using that $F \leq A t^{-l-K}$.

    If we choose $q$ very close to 1 (i.e. $m$ very large), then we can choose $\beta \in (0,1)$ so that $\beta-3/4-(1-q)(l+K) > 0$ and $\beta-1/2-(2-2q)(l+K) > 0$. Then our previous inequality implies (multiplying through by $t_0^\beta$), that
    \[ 0 \leq -\alpha(1-\beta) \Psi F + C\Psi + C_m\big[ (\Psi F)^q + (\Psi F)^{2q-1}\big]. \]
    We can now choose $\alpha$ so large that $\alpha(1-\beta) > 2C_m + 1$. Then
    \[ (\Psi F) \leq C\Psi + C_m\big[ (\Psi F)^q + (\Psi F)^{2q-1} - 2(\Psi F)\big]. \]
    It follows that if $\Psi F$ is sufficiently large, then this leads to a contradiction, and so $\Psi F \leq C$. This implies that $H\leq C$ at the maximum, as required. This implies the estimate $F\leq C$ on $B_{g(1)}(p, 8D)$, and the bounds for the distance along our Ricci flow imply that $B_{g(0)}(p, 1)\subset B_{g(1)}(p, 8D)$.
\end{proof}

  \begin{prop}\label{prop:chart1} Suppose that we are in the situation of Proposition~\ref{prop:Ricflow1}, with sufficiently small $\epsilon$. Then on a smaller ball $B(p,r)$ we have a holomorphic chart $F : B(p,r) \to \mathbb{C}^n$, such that for suitable $r_1, r_2 > 0$ the image of $F$ satisfies
    \[ B(0,r_1) \subset F(B(p,r)) \subset B(0,r_2). \]
\end{prop}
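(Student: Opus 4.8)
The plan is to build the chart at the regularized time $t=1$ furnished by Proposition~\ref{prop:Ricflow1}, where the metric is almost Euclidean in $C^5$, and then transfer it back to $t=0$ using that the complex structure $J$ of $M$ is the same at every time along the flow. First apply Proposition~\ref{prop:Ricflow1} to get the Ricci flow $g(t)$ on $B(p,\epsilon^{-1/2})$, $t\in[0,1]$, with $|g(1)-g_{Euc}|_{C^5}<\Psi(\epsilon|n)$ on $B_{g(1)}(p,10D)$, curvature bound $|\mathrm{Rm}|<a/t$, and the stated distance comparison. Since $g(0)=g$ is K\"ahler on the whole ball $B(p,\epsilon^{-1/2})$, the time-derivatives $\partial_t^k g(t)|_{t=0}$ there are differential expressions in $g(0)$, hence coincide with those of the K\"ahler--Ricci flow of $g$ and are of type $(1,1)$ for $J$; consequently $f:=|\nabla^{g(t)}J|^2_{g(t)}$ satisfies $\partial_t^k f|_{t=0}=0$ for all $k$ on $B(p,\epsilon^{-1/2})$. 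The tensor $\nabla^{g(t)}J$ obeys a heat-type equation whose zeroth-order coefficients are bounded by $|\mathrm{Rm}|<a/t$ and whose forcing is controlled by the conformal modification, which was performed far from $p$; so $f$ satisfies a differential inequality of the form $(\partial_t-\Delta)f\le \tfrac{A}{t}\max_{s\le t}f+A_kt^k$ with small $A,A_k$, together with a polynomial bound $f\le At^{-l}$. Lemma~\ref{lem:HT} then gives $\sup_{B_{g(0)}(p,1)}f(\cdot,t)\le C_kt^k$, and after rescaling to the curvature scale of $g(1)$ one concludes that $J$ is $C^3$-close to a constant complex structure on $B_{g(1)}(p,9D)$. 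After a fixed linear change of the coordinates of Proposition~\ref{prop:Ricflow1} we obtain complex coordinates $b=(b^1,\dots,b^n)$ there with $|\bar\partial b^j|_{C^3}<\Psi(\epsilon|n)$, and $b$ is a $\Psi(\epsilon|n)$-isometry of $(B_{g(1)}(p,9D),g(1))$ onto a neighbourhood of $0$ in $\mathbb{C}^n$.

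Next I correct the $b^j$ to genuine holomorphic functions. On $\Omega=\{|b|^2<(8D)^2\}\subset B_{g(1)}(p,9D)$ the function $|b|^2$ is strictly plurisubharmonic, since $i\partial\bar\partial|b|^2\ge(1-\Psi(\epsilon|n))\,g(1)$, so $\Omega$ is strictly pseudoconvex, and moreover $i\partial\bar\partial|b|^2+\mathrm{Ric}(g(1))>0$. Applying H\"ormander's $L^2$-estimate with weight $|b|^2$ to the $\bar\partial$-closed $(0,1)$-form $\bar\partial b^j$ yields $v^j$ with $\bar\partial v^j=\bar\partial b^j$ and $\|v^j\|_{L^2(\Omega)}<\Psi(\epsilon|n)$. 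The minimal solution satisfies the elliptic equation $\Box v^j=\bar\partial^*\bar\partial b^j$, with right-hand side $\Psi(\epsilon|n)$-small in $C^2$ and coefficients controlled in $C^3$; interior Schauder estimates then give $\|v^j\|_{C^{1,\alpha}(\Omega')}<\Psi(\epsilon|n)$ on $\Omega'=\{|b|^2<(7D)^2\}$. Setting $w^j=b^j-v^j$, the map $F=(w^1,\dots,w^n):\Omega'\to\mathbb{C}^n$ is holomorphic, $C^1$-close to $b$, has everywhere invertible differential, is injective on the definite-size ball $B_{g(1)}(p,7D)$, and maps it biholomorphically onto a set sandwiched between two Euclidean balls of definite radii.

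Finally, transfer to $t=0$. The complex structure used above is that of $M$ and is independent of $t$, so $F$ is holomorphic on the open subset $\Omega'\subset B(p,\epsilon^{-1})$. Since $D$ may be taken close to $1$, the distance estimates of Proposition~\ref{prop:Ricflow1} let us fix $r>0$ with $B_{g(1)}(p,r-D)\subset B_{g(0)}(p,r)\subset B_{g(1)}(p,D(r+1))\subset\Omega'$. Restricting $F$ to $B(p,r)=B_{g(0)}(p,r)$ gives the desired holomorphic chart, and because $F$ is $C^1$-close to the near-isometry $b$ while the two $g(1)$-balls above have definite radii comparable to $r$, the image obeys $B(0,r_1)\subset F(B(p,r))\subset B(0,r_2)$ for suitable $0<r_1<r_2$.

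The step requiring the most care is the first one: converting the exact identity $\nabla^{g(0)}J=0$ on the large ball into a quantitative $C^k$ bound on $J$ at the regularized time $t=1$, which is precisely where the infinite-order vanishing at $t=0$ and the heat-equation estimate of Lemma~\ref{lem:HT} (together with the curvature control $|\mathrm{Rm}|<a/t$) are used. A secondary subtlety is that the regularized metric $g(1)$ is only \emph{almost} K\"ahler for $J$, so one must run H\"ormander's estimate on a genuinely strictly pseudoconvex sublevel set with an admissible weight, and recover $C^1$-control of the correction from the elliptic equation satisfied by the minimal $L^2$-solution rather than from $\bar\partial$ alone; the rest is routine bookkeeping with the comparison constant $D$.
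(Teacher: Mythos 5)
Your overall architecture matches the paper's: regularize by the Ricci flow of Proposition~\ref{prop:Ricflow1}, show the complex structure has quantitative $C^k$ control at $t=1$ via infinite-order vanishing at $t=0$ plus Lemma~\ref{lem:HT}, then produce genuine holomorphic coordinates by a H\"ormander/Newlander--Nirenberg correction and transfer back to $t=0$ with the distance estimates. Your second and third steps (the $\bar\partial$-correction on a pseudoconvex sublevel set, and the ball sandwich at $t=0$) are fine and agree in substance with the paper.

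The gap is in the first step, which you yourself flag as the delicate one but then dispose of in a single sentence. You assert that $f=|\nabla^{g(t)}J|^2$ satisfies $(\partial_t-\Delta)f\le \tfrac{A}{t}\max_{s\le t}f+A_kt^k$ because ``the forcing is controlled by the conformal modification, which was performed far from $p$.'' This is not correct. For the fixed complex structure $J_0$, the evolution is $\partial_t\nabla J_0=\nabla\mathrm{Ric}\ast J_0$ (an ODE in $t$, not a heat equation), and the forcing $\nabla\mathrm{Ric}$ is local and a priori only of size $t^{-3/2}$ by Shi's estimates; it cannot be absorbed into either $\tfrac{A}{t}\max_s f$ or $A_kt^k$. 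The entire difficulty of the proposition is to show that the \emph{anti-K\"ahler part} of the curvature and its derivatives --- in the paper's notation $\hat R=R\circ\hat P$, $\hat S$, $\hat T$ together with $\nabla\hat P$, $\nabla^2\hat P$ for Kotschwar's evolving almost complex structure $J(t)$ --- themselves vanish to infinite order in $t$. This requires setting up and closing the coupled system of differential inequalities \eqref{eq:dthatP}--\eqref{eq:dthatT}, combining them into the quantities $Y$ and $Z$, and applying Lemma~\ref{lem:HT} to $Z$ \emph{first}; only then does the evolution equation $D_t\nabla J=\hat S\ast J+R\ast\nabla\hat P\ast J+R\ast\nabla J$ have forcing of size $C_kt^k$, allowing the iteration that bounds $|\nabla^i J|$ and finally $|\nabla^i(J-J_0)|$. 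Your proposal skips this entirely, so the hypothesis of Lemma~\ref{lem:HT} for your $f$ is unverified, and the argument as written does not close.
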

\begin{proof}
 We consider the Ricci flow $g(t)$ on $B = B(p,\epsilon^{-1/2})$ provided by Proposition~\ref{prop:Ricflow1}, and show that the metric $g(1)$ is ``approximately K\"ahler'' in the sense that on a smaller ball, $B(p,r_0)$, we have $|\nabla^iJ_0| < C$ for $i\leq 5$. Here we denote by $J_0$ the fixed complex structure on $B(p,\epsilon^{-1/2})$ to distinguish it from another, time dependent family of almost complex structures $J(t)$ below. We follow the argument given by Kotschwar~\cite{Kot17} for preserving the K\"ahler condition along a Ricci flow, using Lemma~\ref{lem:HT} (see also Huang-Tam~\cite{HT15}, Shi~\cite{Shi97}).

 Following Kotschwar~\cite{Kot17} we first define a family $J(t)$ of almost complex structures by $J(0) = J_0$, and
 \[ \frac{\partial}{\partial t} J^a_b = R^c_b J^a_c - R^a_c J^c_b. \]
 It is convenient to introduce a differential operator $D_t$, in terms of which $D_t J = 0$ (see \cite{Kot17} for details). We also have $D_t g = 0$, and $g$ remains Hermitian for the almost complex structure $J$.

Define $F, G \in \mathrm{End}(\wedge^2 T^*B)$ by
 \[ (F\eta) (X,Y) = \frac{1}{2}(\eta(X, JY) + \eta(JX,Y)), \quad (G\eta)(X,Y) = \eta(JX, JY). \]
 We then let
 \[ \tilde{P} = \frac{1}{2} (\mathrm{Id} + G), \quad \hat{P} = \frac{1}{2}(\mathrm{Id} - G). \]
 On the complexification $\wedge^2_{\mathbb{C}} T^*B$ we have
 \[ P^{(2,0)} = \frac{1}{2} \left(\hat{P} - \sqrt{-1}F\right), \quad P^{(1,1)} = \tilde{P}, \quad P^{(0,2)} = \overline{P^{(2,0)}}, \]
 for the orthogonal projection maps onto $\wedge^{2,0}B, \wedge^{1,1}B, \wedge^{0,2}B$. We will control the complex structure $J$ along the flow in terms of the piece $\hat{R} = R\circ \hat{P}$, where $R : \wedge^2T^*B \to \wedge^2 T^*B$ is the curvature operator of $g(t)$. The derivatives of $J$ will then be controlled by derivatives $\nabla^i \hat{R}$. The evolution equations of these in turn depend on derivatives $\nabla^i \hat{P}$. Using that the initial metric is K\"ahler, at $t=0$ we have $\partial_t^k\nabla^i \hat{R} = 0$ and $\partial_t^{k+1}\nabla^i \hat{P} = 0$ for all $k\geq 0$. For the calculations we also have the commutation formulas (see Kotschwar~\cite[Lemma 4.3]{Kot11}) for a tensor $X$
 \[ \begin{aligned}
     \, [D_t, \nabla] X &= \nabla R \ast X + R \ast \nabla X \\
     [D_t - \Delta, \nabla_a] &= 2R_{abdc}\Lambda^c_d \nabla_b + 2R_{ab}\nabla_b,
   \end{aligned} \]
 where $\Lambda$ is a certain algebraic operation on tensors.

 Let us write $\hat{S} = (\nabla R) \circ \hat{P}$  and $\hat{T} = (\nabla^2R) \circ\hat{P}$. Here the action of $\hat{P}$ is such that for instance $\hat{S}(X, \eta) = (\nabla_X R)\hat{P}(\eta)$ for a vector $X$ and 2-form $\eta$. Note that $\hat{S} = \nabla\hat{R} + R\ast \nabla\hat{P}$, and $\hat{T} = \nabla\hat{S} + \nabla R \ast \nabla\hat{P}$. The advantage of $\hat{S}$ and $\hat{T}$ over $\nabla\hat{R}, \nabla^2\hat{R}$ is that their evolution equations only involve up to two derivatives of $\hat{P}$, and their norms are controlled by $|\nabla R|, |\nabla^2 R|$ since $\hat{P}$ is a projection.
 From \cite[Proposition 4.5]{Kot11} we have
 \[ D_t \nabla\hat{P} = R\ast \nabla\hat{P} + \hat{P} \ast \hat{S}, \]
 and so we also have
 \[ \begin{aligned}
     D_t \nabla^2\hat{P} &= \nabla D_t\nabla\hat{P} + \nabla R\ast \nabla\hat{P} + R\ast \nabla^2\hat{P} \\
     &= R\ast \nabla^2\hat{P} + \nabla R \ast \nabla\hat{P} + \nabla\hat{P} \ast \hat{S} + \hat{P} \ast \hat{T} + \hat{P} \ast R \ast \nabla\hat{P}.
   \end{aligned} \]
 Using the estimates for $\nabla^i R$ along the flow, it follows that
 \begin{equation}\label{eq:dthatP} \begin{aligned}
     \partial_t |\nabla\hat{P}| &\leq \frac{A}{t} |\nabla\hat{P}| + c|\hat{S}| \\
     \partial_t |\nabla^2\hat{P}| &\leq \frac{A}{t} |\nabla^2\hat{P}| + \frac{A}{t^{3/2}}|\nabla \hat{P}| + c|\hat{T}|,
   \end{aligned} \end{equation}
 for a dimensional constant $c$.

 For the evolution of the curvature we have (see \cite[Proposition 4.7, Lemma 4.9]{Kot11})
 \[ \begin{aligned}
     (D_t - \Delta) R &= R\ast R,  \\
     [(D_t - \Delta)R] \circ \hat{P} &= R\ast \hat{R} + \hat{P} \ast R\ast \hat{R}.
   \end{aligned}\]
 It follows that
 \[ \begin{aligned}
     (D_t - \Delta) \hat{R} &= [(D_t - \Delta) R] \circ \hat{P} + R \ast \nabla^2\hat{P} + \nabla R \ast \nabla\hat{P}  \\
     &= R\ast \hat{R} + \hat{P} \ast R\ast \hat{R} + R\ast \nabla^2\hat{P} + \nabla R\ast\nabla\hat{P},
   \end{aligned} \]
 and so
 \begin{equation}\label{eq:dthatR}
   (\partial_t - \Delta) |\hat{R}| \leq \frac{A}{t} |\hat{R}| + \frac{A}{t} |\nabla^2\hat{P}| + \frac{A}{t^{3/2}} |\nabla\hat{P}|.
 \end{equation}
 Similarly we have
 \[     (D_t - \Delta) \hat{S} = [(D_t - \Delta) \nabla R] \circ \hat{P} + \nabla R\ast \nabla^2\hat{P} + \nabla^2 R\ast \nabla\hat{P}.\]
 Using the commutation relations,
 \[ \begin{aligned}
     \, [(D_t - \Delta) \nabla_aR] \circ \hat{P} &= [ \nabla(D_t - \Delta)R ]\circ \hat{P} + [2R_{abdc}\Lambda^c_d\nabla_bR + 2R_{ab}\nabla_b R]\circ \hat{P}
   \end{aligned} \]
 For the term involving $\Lambda_d^c$ we have (see the calculation in \cite[Proposition 4.13]{Kot11})
 \[ \hat{P}_{ijkl} R_{abdc}\Lambda_d^c\nabla_bR_{klmn} = R\ast \hat{S} + \nabla R\ast \hat{R} \ast \hat{P}. \]
It follows that
\[
\begin{aligned}
  \, [(D_t - \Delta) &\nabla_aR] \circ \hat{P} = \nabla\big( [(D_t - \Delta)R]\circ \hat{P}\big) + [(D_t - \Delta)R] \ast \nabla\hat{P} \\
  &\quad + R\ast \hat{S} + \nabla R\ast\hat{R}\ast\hat{P}  \\
     &= \nabla R\ast \hat{R} + R\ast \nabla\hat{R} + \nabla\hat{P}\ast R \ast \hat{R} + \hat{P}\ast \nabla R \ast \hat{R} + \hat{P}\ast R\ast \nabla\hat{R} \\
     &\quad + R\ast R \ast \nabla\hat{P} + R\ast \hat{S} \\
     &= R\ast \hat{S} + \nabla R \ast \hat{R} + R\ast R\ast\nabla\hat{P} + R\ast\hat{R}\ast\nabla\hat{P} + \hat{P}\ast\nabla R\ast \hat{R} \\
     &\quad + \hat{P}\ast R\ast \hat{S} + \hat{P}\ast R\ast R\ast \nabla\hat{P}.
   \end{aligned} \]
This implies
 \begin{equation}\label{eq:dthatS} (\partial_t - \Delta) |\hat{S}| \leq \frac{A}{t} |\hat{S}| + \frac{A}{t^{3/2}}|\nabla^2 \hat{P}| + \frac{A}{t^2} |\nabla\hat{P}| + \frac{A}{t^{3/2}}|\hat{R}|. \end{equation}
 For $\hat{T}$, we have
\[
     (D_t - \Delta) \hat{T} = [(D_t - \Delta) \nabla^2 R] \circ \hat{P} + \nabla^2 R \ast \nabla^2\hat{P} + \nabla^3 R \ast \nabla\hat{P}.
\]
By the commutation relations again
\[ \begin{aligned} \, [(D_t - \Delta)\nabla_a \nabla R] \circ \hat{P} &=
    [ \nabla(D_t -\Delta) \nabla R] \circ\hat{P} \\ &\quad + [2R_{abdc}\Lambda_d^c\nabla_b\nabla R + 2R_{ab}\nabla_b \nabla R] \circ\hat{P},
  \end{aligned} \]
By the same argument as \cite[Proposition 4.13]{Kot11}, the term involving $\Lambda$ satisfies
\[ R_{abdc}\Lambda_d^c\nabla_b\nabla R \circ\hat{P} = R \ast \hat{T} + \nabla^2R \ast \hat{R} \ast \hat{P}. \]
At the same time, we have
\[ \begin{aligned}
   \, [\nabla (D_t - \Delta)\nabla R] \circ\hat{P} &= \nabla\big([(D_t - \Delta)\nabla R]\circ \hat{P}\big) + [(D_t - \Delta)\nabla R] \ast \nabla\hat{P}.
  \end{aligned} \]
Using the calculations above, and collecting various terms, we obtain
\begin{equation}\label{eq:dthatT}
    \begin{aligned}
    (\partial_t - \Delta) |\hat{T}| &\leq \frac{A}{t}|\hat{T}| + \frac{A}{t^2}|\nabla^2\hat{P}| + \frac{A}{t^{5/2}}|\nabla\hat{P}| + \frac{A}{t^2}|\hat{R}| + \frac{A}{t^{3/2}}|\hat{S}|.
  \end{aligned}
\end{equation}

Given a constant $K > 0$, using Equations~\eqref{eq:dthatP}, \eqref{eq:dthatR}, \eqref{eq:dthatS}, \eqref{eq:dthatT}, we have
\[ \begin{aligned}
    \partial_t t^{-K-1/2}|\nabla\hat{P}| &\leq \frac{A-K-1/2}{t} t^{-K-1/2}|\nabla\hat{P}| + c t^{-K-1/2}|\hat{S}| \\
    \partial_t t^{-K}|\nabla^2\hat{P}| &\leq \frac{A-K}{t} t^{-K}|\nabla^2\hat{P}| + \frac{A}{t} t^{-K-1/2}|\nabla\hat{P}| + c t^{-K}|\hat{T}| \\
    (\partial_t - \Delta) t^{-K-1} |\hat{R}| &\leq \frac{A-K-1}{t} t^{-K-1}|\hat{R}| + \frac{A}{t^2} t^{-K}|\nabla^2\hat{P}| + \frac{A}{t^2} t^{-K-1/2} |\nabla\hat{P}| \\
    (\partial_t - \Delta) t^{-K-1/2} |\hat{S}| &\leq \frac{A-K-1/2}{t} t^{-K-1/2} |\hat{S}| + \frac{A}{t^2} t^{-K} |\nabla^2\hat{P}| \\ &\quad + \frac{A}{t^2} t^{-K-1/2} |\nabla\hat{P}| + \frac{A}{t} t^{-K-1} |\hat{R}| \\
    (\partial_t - \Delta) t^{-K} |\hat{T}| &\leq \frac{A-K}{t} t^{-K}|\hat{T}| + \frac{A}{t^2} t^{-K} |\nabla^2\hat{P}| + \frac{A}{t^2} t^{-K-1/2} |\nabla\hat{P}| \\ &\quad + \frac{A}{t} t^{-K-1}|\hat{R}| + \frac{A}{t} t^{-K-1/2} |\hat{S}|
 \end{aligned} \]

Let $K > 3A+2$ and define
 \[  \begin{aligned}
     Y &= t^{-K}|\nabla^2 \hat{P}| + t^{-K-1/2} |\nabla\hat{P}| \\
     Z &= t^{-K}|\hat{T}| + t^{-K-1/2} |\hat{S}| + t^{-K-1} |\hat{R}|.
   \end{aligned} \]
 From the inequalities above we then obtain
 \[ \begin{aligned}
     \partial_t Y &\leq c Z \\
     (\partial_t - \Delta) Z &\leq \frac{A}{t^2} Y.
   \end{aligned} \]
 Note that $Y$ is still smooth up to $t=0$, and $Y(x,0) = 0$ for all $x$. It follows that
 \[ Y(x,t) \leq t c\max_{s\leq t} Z(x,s), \]
 and therefore $Z$ satisfies the inequality (for a larger choice of $A$)
 \[ (\partial_t - \Delta) Z(x,t) \leq \frac{A}{t} \max_{s\leq t} Z(x,s). \]
 All $t$-derivatives of $Z$ vanish at $t=0$ because the initial metric is K\"ahler. In addition since $\hat{P}$ is a projection map, the norms $|\hat{R}|, |\hat{S}|, |\hat{T}|$ are controlled by $|R|, |\nabla R|, |\nabla^2 R|$. By the curvature estimates along the flow we have $Z \leq A / t^l$ for some $A, l$. We can therefore apply Lemma~\ref{lem:HT} to obtain that $Z \leq C_k t^k$ on $B(p, 1)$. In turn this also implies that $Y \leq C_k t^k$.

 We can apply the same argument to obtain estimates for further derivatives of the curvature composed with $\hat{P}$, inductively. Analogously to the above, we have inequalities
 \[ (\partial_t - \Delta) |\nabla^i R \circ \hat{P}| \leq \frac{A}{t} |\nabla^i R\circ \hat{P}| + C_kt^k, \]
 where we are using the inductive assumption to control $|\nabla^j R\circ \hat{P}|$ for $j < i$. It follows that on a smaller ball $B(p, r_0)$ we have $|\nabla^i R\circ \hat{P}| < C_kt^k$, for $i < 5$, say. This in turn implies estimates  $|\nabla^i \hat{P}| \leq C_kt^k$ for $0< i<5$.

 We can now use this to control $\nabla^i J$ for $0 < i < 5$. Indeed, the evolution of $\nabla J$ has the form (see \cite[Lemma 7]{Kot17})
 \[ D_t \nabla J = \hat{S} \ast J + R\ast \nabla\hat{P} \ast J + R\ast \nabla J, \]
 and so
 \[ \partial_t |\nabla J| \leq \frac{A}{t} |\nabla J| + C_kt^k. \]
 it follows that $|\nabla J| \leq C_kt^k$, since all $t$-derivatives of $\nabla J$ vanish at $t=0$. For the higher derivatives of $J$ we can use the commutation relation to get
 \[ \begin{aligned} D_t \nabla^i J &= \nabla D_t \nabla^{i-1}J + R\ast \nabla^{i-1} J + \nabla R \ast \nabla^i J
   \end{aligned} \]
 and so we can inductively find inequalities of the form
 \[ \partial_t |\nabla^i J| \leq \frac{A}{t} |\nabla^i J| + C_kt^k, \]
 for $0 < i < 5$, say. We therefore have $|\nabla^i J| \leq C_kt^k$ for $0 < i < 5$ on $B(p,r_0)$. From these bounds we find that the curvature endomorphism satisfies, for a local orthonormal frame $e_i$, that
 \[ |R(e_i, e_j) Je_k - J R(e_i, e_j) e_k| \leq C_kt^k, \]
 and from this it follows that the Ricci endomorphism $Rc$ also satisfies $|Rc(Je_i) - JRc(e_i)| \leq C_k t^k$. The evolution of $J$ is given by $\partial_t J = J\circ Rc - Rc\circ J$, and so we find that $|J - J_0| \leq C_kt^k$. Similarly we can also obtain $|\nabla^i\partial_t J| \leq C_kt^k$, and so we can inductively obtain estimates $|\nabla^i (J-J_0)| \leq C_kt^k$ for $i < 5$, using the equations
 \[ D_t  \nabla^i(J-J_0) = R\ast \nabla^i(J-J_0) + \nabla R \ast \nabla^{i-1}(J - J_0) + \nabla D_t \nabla^{i-1}(J-J_0). \]
Finally we can conclude that we have estimates $|\nabla^i J_0| \leq C_kt^k$ on $B(p,r_0)$ for $t \leq 1$.

By rescaling (and choosing $\epsilon$ smaller), we can assume that we have estimates $|\nabla^i J_0| \leq C$ with respect to the metric $g(1)$, on $B_g(p, 6D)$. By the properties of the flow $g(t)$ in Proposition~\ref{prop:Ricflow1} we can view $g(1)$ as a metric on the Euclidean ball $B_{\mathbb{C}^n}(0,5D)$, close in $C^5$ to the Euclidean metric, and so $J_0$ has bounded derivatives in terms of the Euclidean metric. After a linear change of coordinates with bounded eigenvalues, we can assume that $J_0$ is standard at the origin, and it still has bounded derivatives. We can now find holomorphic functions on a small ball $B_{g(1)}(p,r_0)$ which are perturbations of the complex linear functions, for instance by the approach of H\"ormander~\cite{Ho66} using $L^2$-estimates to the Newlander-Nirenberg theorem. If we choose $\epsilon$ sufficiently small, then by the distance estimates along the flow (applied for small $t$) we can obtain
\[ B_{g(1)}(p, r_0/3D) \subset B_{g(0)}(p, r_0/2D)\subset B_{g(1)}(p, r_0), \]
since by the curvature estimates, for any small $t_0 > 0$ we can assume that $g(t)$ is very close to $g(1)$ for $t\in [t_0, 1]$. Setting $r=r_0/2D$, we thus obtain a holomorphic chart as required.
\end{proof}

We next construct a bounded K\"ahler potential locally.

\begin{prop}\label{prop:potential1}
  There exist $\epsilon, C > 0$ with the following property. Suppose that $B(p,\epsilon^{-1})$ is relatively compact in a K\"ahler manifold $(M^n, \omega)$ satisfying $\mathrm{Ric}(\omega) > -\epsilon \omega$, and in addition  $d_{GH}(B(p, \epsilon^{-1}), B_{\mathbb{C}^n}(0, \epsilon^{-1}))$. Then on $B(p,C^{-1})$ we can write $\omega = i \partial\bar\partial \phi$ with $|\phi| < C$.
\end{prop}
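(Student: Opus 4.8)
The plan is to reduce the estimate to the time-one metric of the Ricci flow that is already available, which is essentially Euclidean.

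First, by Proposition~\ref{prop:chart1} (applied after a harmless decrease of $\epsilon$) there is a holomorphic chart $F\colon B(p,r)\to\mathbb C^n$ with $B(0,r_1)\subset F(B(p,r))\subset B(0,r_2)$, all constants depending only on $n$. From now on I identify $B(0,r_1)\subset\mathbb C^n$ with its preimage, so that $\omega$ is a K\"ahler form on $B(0,r_1)$, with standard complex structure $J_0$. As $B(0,r_1)$ is Stein we may write $\omega=i\partial\bar\partial\phi$ for a smooth $\phi$, unique up to a pluriharmonic function; the point is to pin down this ambiguity so that $\phi$ is uniformly bounded on a smaller ball.

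Next I would use the Ricci flow $g(t)$, $t\in[0,1]$, of Proposition~\ref{prop:Ricflow1}. On the ball $B(p,r_0)$ where, by the proof of Proposition~\ref{prop:chart1}, $g(t)$ is K\"ahler with respect to $J_0$ up to an error that is $O(t^\infty)$ in $C^5$ and $J(t)$ stays close to $J_0$, the equation $\partial_t\omega(t)=-2\,\mathrm{Ric}(\omega(t))$ becomes, to the same order, $\partial_t\omega(t)=2i\partial\bar\partial\log\det g(t)$, the determinant being taken in the $F$-coordinates. Integrating in $t$ gives
\[ \omega(1)=\omega+i\partial\bar\partial\Big(2\int_0^1\log\det g(t)\,dt\Big)+i\partial\bar\partial\eta, \]
where $\eta$ is $C^3$-small: it comes from integrating the error forms and is produced by solving an $i\partial\bar\partial$-equation with a bound. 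Since $g(1)$ is $C^5$-close to the Euclidean metric in suitable coordinates, which after a bounded linear change may be taken $J_0$-holomorphic, there is a K\"ahler potential $\phi_1$ of $\omega(1)$ with $|\phi_1-|z|^2|<\Psi(\epsilon|n)$; setting $\phi:=\phi_1-2\int_0^1\log\det g(t)\,dt-\eta$ gives a K\"ahler potential of $\omega$ for a definite choice of the pluriharmonic ambiguity, and it remains to bound $\int_0^1\log\det g(t)\,dt$. For this, $\partial_t\log\det g(t)=-S(t)$ with $S(t)$ the scalar curvature of $g(t)$. The parabolic maximum principle along the flow (applied to the conformally modified complete metric of Proposition~\ref{prop:Ricflow1}, whose scalar curvature at $t=0$ is $\ge-\Psi(\epsilon|n)$ by He's construction) gives $S(t)\ge-\Psi(\epsilon|n)$ for all $t$, while $|S(t)|\le Ca/t$ by the bound $|\mathrm{Rm}|<a/t$. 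Integrating these backward from $t=1$, and using $|\log\det g(1)|\le C$ since $g(1)$ is $C^5$-close to Euclidean, we obtain $-C\le\log\det g(t)\le C+Ca\log(1/t)$ on $B(p,r_0)$ for $t\in(0,1]$. As $\log(1/t)$ is integrable on $(0,1]$, this yields $\big|\int_0^1\log\det g(t)\,dt\big|\le C$, hence $|\phi|\le|\phi_1|+2C+\|\eta\|_\infty\le C$ on a definite ball; pulling back by $F$ gives the claim.

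The main difficulties are, first, that the flow is only approximately K\"ahler on the fixed ball $B(p,r_0)$, so the $O(t^\infty)$ errors of Proposition~\ref{prop:chart1} must be carried through the entire argument, in particular through the production of $\eta$ and the identification of $\phi_1$; and second, the bookkeeping of the radii $r,r_0,r_1$ and of the distance-distortion constant $D$ of Proposition~\ref{prop:Ricflow1}, which is needed to guarantee that the ball $B_{g(0)}(p,C^{-1})$ on which we want the estimate lies, for all $t\in[0,1]$, inside the region where $g(t)$ stays close to K\"ahler and the curvature estimates hold. It is worth noting that the argument does not — and cannot — bound $\log\det g(0)$ itself, which is not uniformly controlled under a mere Ricci lower bound; it is precisely the time-average of $\log\det g(t)$ along the flow, which is exactly what enters the potential, that is bounded, the potentially unbounded $\log\det g(0)$-contribution being cancelled by the corresponding singular part of $\int_0^1(1-u)S(u)\,du$ near $u=0$.
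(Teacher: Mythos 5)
Your argument is correct in outline, but it takes a genuinely different route from the paper's. The paper does not reuse the Riemannian Ricci flow of Proposition~\ref{prop:Ricflow1} at this stage: instead it pushes the metric forward by the chart of Proposition~\ref{prop:chart1}, extends it to all of $\mathbb{CP}^n$ by taking a regularized maximum of its local potential with $K\log(1+|z|^2)$, and runs the global K\"ahler--Ricci flow of Tian--Zhang on $\mathbb{CP}^n$, applying pseudolocality a second time to this new flow. The payoff is that the flow is \emph{exactly} K\"ahler, so the potential evolves by the exact equation $\dot\phi_t=\log(\eta_t^n/\Omega)$ and the bound $|\mathrm{Ric}(\eta_t)|<C/t$ gives $|\dot\phi_t|<C|\log t|$, which is integrable --- the same integrability endgame as yours. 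Your version avoids the gluing and the second flow by exploiting the $O(t^\infty)$ approximate-K\"ahlerity of $g(t)$ established in the proof of Proposition~\ref{prop:chart1}; the price is that $\mathrm{Ric}(g(t))$ equals $-i\partial\bar\partial\log\det g(t)$ only up to torsion terms (involving $\nabla^iJ_0$, $\nabla^i(J-J_0)$ and the failure of $g(t)$ to be $J_0$-Hermitian), and these must be shown to be $O(t^k)$ in \emph{fixed} coordinates, not just with respect to $g(t)$; this works because the eigenvalue distortion $t^{\pm Ca}$ coming from $|\mathrm{Rm}|<a/t$ is beaten by $C_kt^k$. Two points in your write-up deserve care. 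First, $\omega(1)$ (the $J_0$-$(1,1)$ part of $g(1)$) is not closed, so it does not literally admit $\phi_1$; the clean statement is that $\omega(0)-i\partial\bar\partial\bigl(|z|^2-2\int_0^1\log\det g(t)\,dt\bigr)$ is an exactly closed $(1,1)$-form with small bounded coefficients (being a difference of closed $(1,1)$ currents), and hence has a bounded potential $\eta$ on a smaller coordinate ball. Second, interchanging $i\partial\bar\partial$ with $\int_0^1dt$ requires a limiting argument from $\int_\delta^1$, since $i\partial\bar\partial\log\det g(t)\sim \mathrm{Ric}(g(t))$ is only conditionally integrable in $t$ (its integral telescopes to $\tfrac12(g(0)-g(1))$ rather than converging absolutely). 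Neither is a gap, and your scalar-curvature argument for the two-sided bound $-C\le\log\det g(t)\le C+Ca\log(1/t)$ is sound, but if you write this up you should carry out the torsion bookkeeping explicitly rather than asserting it.
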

\begin{proof}
  In Proposition~\ref{prop:chart1} we have constructed a holomorphic chart on a small ball around $p$, and by choosing $\epsilon$ smaller and scaling, we can assume that the chart
  \[ F : B(p,1) \to \mathbb{C}^n \]
is defined on $B(p,1)$, and $B(0, r_1)\subset \mathbb{C}^n$ is contained in its image. In terms of this chart, we can view our metric $\omega$ as defining a metric on $B(0,r_1)$. We first ``glue'' this metric onto $\mathbb{CP}^n$ in order to run the Ricci flow.

On $B(0,r_1)$ we can write $\omega = i\partial\bar\partial \psi$, where we can assume that $\psi \geq 0$. Consider the function
  \[ f(z) = \log(1 + |z|^2) - \log(1 + r_1^2/4), \]
  on $\mathbb{C}^n$, that is negative in $B(0,r_1/2)$ and positive elsewhere. It follows that for sufficiently large $K$, if we take a regularized maximum
  \[ h(z) = \widetilde{\mathrm{max}}\{ \psi, Kf \} \]
  then $\eta = i\partial\bar\partial h$ is a well defined metric on $\mathbb{C}^n$ extending to a metric on $\mathbb{CP}^n$, such that $\eta = \omega$ in $B(0,r_1/4)$. The K\"ahler class $[\eta] = K[\omega_{FS}]$, where $\omega_{FS}$ is the Fubini-Study metric. Note that since $\nabla F$ is bounded, we have $F(B(p,\delta_1))\subset B(0,r_1/4)$ for suitable $\delta_1 > 0$.

  It follows from Tian-Zhang~\cite{TZ}, that if $K$ is large (not necessarily bounded a priori), then we have a well defined K\"ahler-Ricci flow solution $\eta_t$ for $t\in [0,1]$. Since for the initial metric $B(p,\delta_1)$ is Gromov-Hausdorff close to the Euclidean ball, we can again apply the result of Cavalletti-Mondino~\cite{CM17} and the pseudolocality theorem (alternatively we could apply the version of the pseudolocality theorem proved by Tian-Wang~\cite{TW}). We find that if $\epsilon$ is sufficiently small, then for small $\delta_2, T > 0$ the metric $\eta_T$ on the ball $B_{\eta_T}(p, \delta_2)$ is $\Psi(\epsilon|n,T)$-close to the Euclidean metric in $C^5$ (and it is K\"ahler), therefore $\eta_T = i\partial\bar\partial \phi_T$ for a potential $\phi_T$ satisfying $|\phi_T| < C$. We will fix $T$ below, depending on the distance estimates following from the pseudolocality theorem.

Letting $\Omega$ be a fixed holomorphic volume form on $B_{\eta_T}(p, \delta_2)$, we can find a family of potentials $\phi_t$ for $\eta_t$ on $B_{\eta_T}(p, \delta_2)$ by solving
the equation
\begin{equation}\label{eq:ddtphit}
  \frac{d}{dt} \phi_t = \log \frac{\eta_t^n}{\Omega}.
\end{equation}
By pseudolocality we have the estimate $|\mathrm{Ric}(\eta_t)|_{\eta_t} < C/t$, and so the Ricci flow equation $\partial_t \eta_t = -\mathrm{Ric}(\eta_t)$ implies that the eigenvalues $\lambda_t$ of $\eta_t$ relative to $\eta_T$ satisfy $C^{-1} t < |\lambda_t| < C/t$. Therefore
\[ \left| \log \frac{\eta_t^n}{\Omega}\right| < C |\log t|, \]
and so from Equation~\eqref{eq:ddtphit}, together with the bound for $\phi_T$, we obtain $|\phi_0| < C$. Therefore the metric $\eta$ has a bounded K\"ahler potential on the ball $B_{\eta_T}(p, \delta_2)$. As long as $T$ is chosen sufficiently small, depending on the distance distortion estimates as in Proposition~\ref{prop:Ricflow1}, this ball contains the ball $B_{\eta}(p, \delta_3)$ for suitable $\delta_3$.
\end{proof}

We now prove Theorem~\ref{thm:chart}

\begin{proof}[Proof of Theorem~\ref{thm:chart}]
  Suppose that we have a sequence $\epsilon_i\to 0$, and corresponding balls $B(p_i, \epsilon_i^{-1})$ with metrics $g_i$ as in the statement of the proposition. From Proposition~\ref{prop:potential1}, we can assume that for large $i$, we have holomorphic charts $F_i: B(p_i,2)\to\mathbb{C}^n$, and K\"ahler potentials $\phi_i$ for $g_i$ on $B(p_i, 2)$, with $|\phi_i|<C$. In addition, by assumption, the $B(p_i,2)$ converge to $B_{\mathbb{C}^n}(0,2)$ in the Gromov-Hausdorff sense.

We can now argue along the lines of the proof of Proposition 3.1 in \cite{L2} to show that for sufficiently large $i$, on smaller balls $B(p_i, \delta)$ with $\delta=\delta(n) > 0$ we can find holomorphic charts $z^i_j$, which give a $\Psi(i^{-1})$-Gromov-Hausdorff approximation to the Euclidean ball $B(0,\delta)$. For this, note first that under the holomorphic charts $F_i$ above, we have $B(0, r_1)\subset F_i(B(p_i, 2))$ for some $r_1 > 0$. We can assume that $F_i(p_i)=0$ for all $i$. Let $U_i \subset B(p_i, 2)$ denote the connected component of $F_i^{-1}(B(p_i, r_1/2))$ containing $p_i$. Then $U_i$ is a Stein domain, which implies that we can apply the H\"ormander $L^2$ existence theorem (see Demailly~\cite{Dem} Theorem 5.1) on $U_i$ with the trivial line bundle equipped with metric $e^{-\phi_i}$. Note that the $\phi_i$ are uniformly bounded, and $B(p_i, r_2)\subset U_i$ for a fixed $r_2 >0$ by the Cheng-Yau gradient estimate for $F_i$. As in \cite{L2}, using Cheeger-Colding~\cite{CC} and Cheeger-Colding-Tian~\cite[Section 9]{CCT}, we have harmonic functions $w^i_j$ on $B(p_i, 2)$ which give a $\Psi(i^{-1})$-Gromov-Hausdorff approximation to $B(0,1)$, and in addition satisfy
\[ \dashint_{B(p_i,1)} |\bar\partial w^i_j|^2 < \Psi(i^{-1}). \]
Using the $L^2$-estimate we can perturb the $w^i_j$ on $U_i$ to holomorphic functions $z^i_j$, which still give a $\Psi(i^{-1})$-Gromov-Hausdorff approximation from $B(p_i, r_2)$ to their image in Euclidean space. Just as in \cite[Claim 3.2]{L2}, on a smaller ball $B(p_i, \delta)$ the $z^i_j$ define holomorphic charts for sufficiently large $i$.

Using our charts, we can now view the metrics $\omega_i$ as defining metrics on the Euclidean ball $B(0, \delta/2)$, with uniformly bounded potentials $\phi_i$ (i.e. we identify the functions $z^i_j$ with $z_j$). The identity map on $B(0,\delta/2)$ is then a $\Psi(i^{-1})$-Gromov-Hausdorff approximation from $\omega_i$ to the Euclidean metric $\omega_{Euc}$, while the gradient bound for the holomorphic functions $z_i$ with respect to $\omega_i$ implies that we have a lower bound $\omega_i > C^{-1}\omega_{Euc}$.
The $\phi_i$ satisfy $\Delta_{\omega_i} \phi_i = 2n$, and so using the gradient estimate, we can take a limit $\phi_\infty$ on $B(0, \delta/2)$. By the same argument as the proof of Claim~\ref{claim:pluriharmonic} below, the function $\phi_\infty - r^2$ is pluriharmonic.  It follows that $\widetilde{\phi}_i = \phi_i + (r^2 - \phi_\infty)$ are also K\"ahler potentials for $\omega_i$, and by construction $|\widetilde{\phi}_i - r_i^2| < \Psi(i^{-1})$, where $r_i$ is the $\omega_i$-distance from $p_i$ .
\end{proof}

Using such holomorphic charts, the following proposition allows us to define a complex structure on the almost regular set $\mathcal{R}_\epsilon$ in the limit space of K\"ahler manifolds with Ricci curvature bounded below, for sufficiently small $\epsilon$.

\begin{prop}\label{prop:limitchart} There exists $\epsilon = \epsilon(n)>0$ so that the following holds.
Let $(M_i^n, p_i, \omega_i)$ be a sequence of K\"ahler manifolds (not necessarily complete) so that $\mathrm{Ric}\geq -\epsilon$ and $B(p_i, \frac{2}{\epsilon})\subset\subset M_i$, with $d_{GH}(B(p_i, 2\epsilon^{-1}), B_{\mathbb{C}^n}(0, 2\epsilon^{-1})) < \epsilon$.
Assume that $(M_i^n, p_i)\to (X, p)$ in the pointed Gromov-Hausdorff sense. Let $(z^i_1, .., z^i_n)$ be holomorphic charts on $B(p_i, 10)$ obtained using Theorem~\ref{thm:chart}, and let us assume $z^i_j\to z_j$ on $B(p, 5)$.
Then $(z_1, .., z_n)$ is a homeomorphism from $B(p, 3)$ to the image.
\end{prop}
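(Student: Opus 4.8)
The plan is to prove that the limit map $Z:=(z_1,\dots,z_n)\colon B(p,4)\to\mathbb{C}^n$ is injective; a homeomorphism onto the image then follows, since $Z$ is continuous and $\overline{B(p,3)}\subset\subset B(p,4)$ is compact. Continuity of $Z$ is harmless: each $z_j^i$ is holomorphic, so $\mathrm{Re}\,z_j^i,\mathrm{Im}\,z_j^i$ are harmonic and are uniformly bounded because $F_i=(z_1^i,\dots,z_n^i)$ is a $\Psi(\epsilon|n)$-Gromov--Hausdorff approximation onto a region of bounded diameter; hence by the Cheng--Yau gradient estimate the $F_i$ are uniformly Lipschitz, $Z$ is Lipschitz, and $Z$ is itself a $\Psi(\epsilon|n)$-Gromov--Hausdorff approximation onto its image. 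The real point is injectivity, and the difficulty is that the error $\Psi(\epsilon|n)$ is a \emph{fixed} small number, so the approximation property controls $Z$ only at scales $\gg\Psi(\epsilon|n)$: it cannot, on its own, distinguish points that are closer than $\Psi(\epsilon|n)$.

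To handle small scales I would establish a uniform bi-Lipschitz lower bound at a definite scale. First, by Bishop--Gromov together with the hypothesis that $B(p_i,2\epsilon^{-1})$ is $\epsilon$-close to a Euclidean ball and Colding's volume convergence, every ball $B(x_i,s)$ with $x_i\in B(p_i,4)$ and $s\le s_1$ has almost-Euclidean volume ratio, uniformly; by the $\epsilon$-regularity theorem of Cheeger--Colding (and Anderson's harmonic radius estimate) the harmonic radius at such $x_i$ is $\ge r_h=r_h(n)>0$ and, in harmonic coordinates on $B(x_i,r_h)$, $\|g_i-g_{\mathrm{Euc}}\|_{C^{1,\alpha}}<\Psi(\epsilon|n)$; since $g_i$ is K\"ahler with $\mathrm{Ric}>-\epsilon$, the parallel complex structure $J_i$ then satisfies $\|J_i-J_0\|_{C^\alpha}<\Psi(\epsilon|n)$ for a constant almost-complex structure $J_0$, which after a bounded linear change of coordinates we take to be standard. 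Now on $B(x_i,r_h)$ the map $F_i$ is $J_i$-holomorphic with $|\nabla F_i|\le C$, so $|\bar\partial_{\mathrm{std}}F_i|\le C\|J_i-J_{\mathrm{std}}\|_{C^0}|\nabla F_i|\le C\Psi(\epsilon|n)$ (and likewise in $C^\alpha$); solving $\bar\partial u_i=\bar\partial_{\mathrm{std}}F_i$ with the standard estimates gives $\|u_i\|_{C^{1,\alpha}}\le C\Psi(\epsilon|n)$ on a slightly smaller ball, so $F_i=h_i+u_i$ with $h_i$ genuinely holomorphic. On the other hand $F_i$, being a $\Psi(\epsilon|n)$-approximation from a ball whose metric is $C^{1,\alpha}$-close to Euclidean, is $C\Psi(\epsilon|n)$-close in $C^0$ to a rigid motion $R_i$ of $\mathbb{R}^{2n}$; hence $h_i$ is $C\Psi(\epsilon|n)$-close in $C^0$ to $R_i$, and Cauchy's estimates upgrade this to $\|h_i-\ell_i\|_{C^1}\le C\Psi(\epsilon|n)$ on a smaller ball, where $\ell_i$ is the $\mathbb{C}$-affine map whose linear part is the $\mathbb{C}$-linear part $A_i'$ of $dR_i$. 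Comparing $\ell_i\approx R_i$ forces the antilinear part of $dR_i$ to have norm $\le C\Psi(\epsilon|n)$, so $A_i'$ is $C\Psi(\epsilon|n)$-close to the orthogonal $dR_i$ and $|A_i'v|\ge\frac12|v|$. Combining, one gets for a definite $s_0=s_0(n)>0$
\[ |F_i(a)-F_i(b)|\ \ge\ \tfrac18\,d_{g_i}(a,b)\qquad\text{for all }a,b\in B_{g_i}(x_i,s_0),\ x_i\in B_{g_i}(p_i,4), \]
with constants independent of $i$. Passing to the limit, $|Z(a)-Z(b)|\ge\frac18 d(a,b)$ for $a,b\in B(x,s_0)$, $x\in B(p,4)$; in particular $Z$ is injective on each such ball.

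To globalize, take $q\ne q'\in B(p,4)$. If $d(q,q')<s_0$ then $q'\in B(q,s_0)$ and the estimate above gives $Z(q)\ne Z(q')$. If $d(q,q')\ge s_0$, then, having chosen $\epsilon=\epsilon(n)$ small enough that $\Psi(\epsilon|n)<s_0/2$, the approximation property gives $|Z(q)-Z(q')|\ge d(q,q')-\Psi(\epsilon|n)>s_0/2>0$. Thus $Z$ is injective on $B(p,4)$, hence $Z|_{\overline{B(p,3)}}$ is a continuous injection from a compact set, hence a homeomorphism onto its image, and so is $Z|_{B(p,3)}$. The main obstacle is the bi-Lipschitz step: one needs uniform $C^{1,\alpha}$ control of the metrics and $C^\alpha$ control of the complex structures on balls of a fixed size (this is where volume rigidity and the smallness of $\epsilon$ enter), and one must convert the $C^0$ statement ``$F_i$ is close to a rigid motion'' into the $C^1$ statement ``$F_i$ is close to a $\mathbb{C}$-affine isomorphism'' via the $\bar\partial$-estimate and Cauchy's inequalities, so as to obtain a scale-invariant (multiplicative) lower bound that defeats the fixed additive error $\Psi(\epsilon|n)$. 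It is essential to fix the structural constants $r_h,s_0$ and the elliptic and $\bar\partial$ constants (all depending only on $n$) before choosing $\epsilon$; note that no complex structure on $X$ is needed, as all complex analysis is performed on the $M_i$ and only the final bi-Lipschitz bound is passed to the limit.
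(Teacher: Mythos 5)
There is a genuine gap at the heart of your argument: the step where you invoke ``the $\epsilon$-regularity theorem of Cheeger--Colding (and Anderson's harmonic radius estimate)'' to get a definite harmonic radius $r_h(n)$ with $\|g_i-g_{\mathrm{Euc}}\|_{C^{1,\alpha}}<\Psi(\epsilon|n)$ is not available here. Anderson's harmonic radius bound requires a \emph{two-sided} Ricci bound; under only $\mathrm{Ric}>-\epsilon$ together with almost-Euclidean volume, Cheeger--Colding theory gives Gromov--Hausdorff closeness at every scale (via Bishop--Gromov and volume convergence) and bi-H\"older charts, but no $C^{1,\alpha}$ control of the metric whatsoever. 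This is precisely the difficulty the whole of Section~\ref{sec:regular} is built to circumvent (Ricci flow pseudolocality, Kotschwar-type estimates): if uniform $C^{1,\alpha}$ harmonic coordinates existed, Theorem~\ref{thm:chart} would be immediate. Everything downstream of that step --- the $C^\alpha$ control of $J_i$, the $\bar\partial$-correction, the comparison with a rigid motion, and hence the definite-scale bound $|F_i(a)-F_i(b)|\ge\frac18 d(a,b)$ --- collapses with it. Worse, the bi-Lipschitz conclusion itself is generally false in this setting: with only a lower Ricci bound the volume ratio at small scales can drop by $\Psi(\epsilon|n)$ below the Euclidean value, and the charts are then only bi-H\"older; compare Claim~\ref{cl-11} in Section~\ref{sec:singularities}, which establishes only $c\,r(q)^{\alpha}\le|z(q)|\le Cr(q)$, and the model of a two-dimensional cone of angle $\alpha<2\pi$, where $|z|\sim r^{2\pi/\alpha}\ll r$.

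Your reduction of the problem is correct --- the GH-approximation property handles pairs of points at distance $\ge s_0$, and the issue is separation below the fixed error scale $\Psi(\epsilon|n)$ --- but the mechanism for small scales has to be different. The paper's route (following Proposition 6.1 of \cite{L1}) is: given $x_1,x_2$ with $d(x_1,x_2)=10d$, rescale to scale $d$, where by volume monotonicity each $B(x_k^i,\epsilon_0^{-1}d)$ is again $\epsilon_0 d$-close to Euclidean, so Theorem~\ref{thm:chart} provides charts $(z^i_{k1},\dots,z^i_{kn})$ at that scale; one then uses $\log\sum_j|z^i_{kj}|^2$ as a logarithmic-pole weight in a H\"ormander $L^2$ argument to produce holomorphic functions on $B(p_i,5)$, with bounds uniform in $i$ and in $d$, that separate $x_1^i$ from $x_2^i$; since these functions factor through the global chart with uniform bounds, the coordinates themselves must separate the limits. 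If you want to keep your structure, you would need to replace your bi-Lipschitz claim by a separation estimate of this H\"ormander type (or at least a scale-invariant bi-H\"older lower bound proved by iterating the chart construction across dyadic scales); the harmonic-coordinate shortcut does not exist in this regularity regime.
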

\begin{proof}
The argument is similar to Proposition $6.1$ of \cite{L1}, and the main point is to prove the injectivity. Suppose that $x_1, x_2\in B(p, 5)$, with $d(x_1, x_2) = 10d\neq 0$. We will show that the coordinates $(z_1, ..., z_n)$ separate them.
Consider sequences $M_i\ni x_1^i\to x_1, M_i\ni x_2^i\to x_2$. Then for large $i$, $B(x^i_1, d)\cap B(x^i_2, d)=\emptyset$. Using volume comparison, we see that $B(x^i_1, \epsilon_0^{-1} d)$ is $\epsilon_0d$-Gromov-Hausdorff close to a Euclidean ball if $\epsilon$ is sufficiently small, where $\epsilon_0$ is the parameter from Theorem~\ref{thm:chart}. It follows that we have holomorphic charts $(z^i_{k1}, ..., z^i_{kn})$ around $x_k$ $(k = 1, 2)$, which give Gromov-Hausdorff approximations from each $B(x^i_k, d)$ to the Euclidean ball. As in \cite[Lemma 5.1]{L2}, we can define plurisubharmonic weight functions
\[ \psi_i = C(d) \phi_i +\sum_{k=1,2} \lambda\Big(d^{-2} \sum_{j=1}^n |z^i_{kj}|^2\Big) \log \left( \sum_{j=1}^n |z^i_{kj}|^2\right) \]
for a suitable constant $C(d)$, and cutoff function $\lambda$ supported in $[0,1/2)$, equal to 1 in $[0,1/4]$. These have the property that $e^{-\psi_i}$ is not locally integrable at $x_1^i,x_2^i$.

Let us define the functions
\[ f_i = \lambda\Big( d^{-2} \sum_{j=1}^n |z^i_{1j}|^2\Big), \]
which equal 1 in $B(x^i_1, d/2)$, and zero outside of $B(x^i_1, d)$. We have a uniform bound $\Vert \bar\partial f_i\Vert_{L^2(e^{-\psi_i})} < C$ using the weight functions $e^{-\psi_i}$, and so applying the H\"ormander estimate (as in the proof of Theorem\ref{thm:chart} the ball $B(p_i, 6)$ is contained in a Stein domain), we can solve the equations $\bar\partial h_i = \bar\partial f_i$ on $B(p_i,6)$, with estimates $\Vert h_i\Vert_{L^2(e^{-\psi_i})} < C$. Note that near $x^i_1, x^i_2$ this implies that $\bar\partial h_i=0$, and since $e^{-\psi_i}$ is not locally integrable near these points, we have $h_i(x^i_k)=0$ for $k=1,2$. This shows that the holomorphic functions $f_i - h_i$ separate the points $x^i_1, x^i_2$.
Since the construction is uniform in $i$, we obtain holomorphic functions of the $z_i$ which separate $x_1, x_2$, and so $x_1,x_2$ must be separated by the $z_i$.
If we now let $F = (z_1,\ldots, z_n)$, and $\Omega = F^{-1}(B(0,4))$, we find that $F$ is proper and injective, and therefore it is a homeomorphism.
\end{proof}

Note that these local charts define a holomorphic atlas on $\mathcal{R}_\epsilon$. For this it is enough to note that if $f_i = f_i(z^i_1,\ldots, z^i_n)$ are uniformly bounded holomorphic functions, and $f_i \to f$ under the Gromov-Hausdorff convergence, then $f$ is a holomorphic function of $z_1,\ldots, z_n$.

To conclude this section, let us present two applications of Theorem \ref{thm:chart}.

\begin{prop}\label{prop:scalar}
Let $(M^n_i,\omega_i, p_i)$ be a sequence of complete K\"ahler manifolds with $\mathrm{Ric} > -1$ and $\mathrm{vol}(B(p_i, 1)) > v>0$. Assume that $(M^n_i, p_i)\to (M^n, p)$ in the pointed Gromov-Hausdorff sense, where $M^n$ is a smooth Riemannian manifold. Then the scalar curvatures $S_i$ of $M_i$ converge to the scalar curvature $S$ of $M$ in the measure sense. That is to say, for any points $M_i\ni q_i\to q\in M$, and any $r>0$, we have $\int_{B(q_i, r)}S_i \omega_i^n \to \int_{B(q, r)}S \omega^n$ as $i\to\infty$.
\end{prop}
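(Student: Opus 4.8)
The plan is to prove weak-$*$ convergence of the signed measures $S_i\,\omega_i^n$ to $S\,\omega^n$; the assertion about balls then follows, since $S_i>-2n$ (so the measures are bounded below by a multiple of $\omega_i^n$), $\mathrm{vol}$ is bounded above (Bishop--Gromov), $\omega_i^n\rightharpoonup\omega^n$ by Colding's volume continuity, and the limit measure $S\omega^n$ gives zero mass to the null hypersurface $\partial B(q,r)$, so no mass can concentrate on the limiting sphere. Thus it is enough to show $\int_{M_i}\chi_i\,S_i\,\omega_i^n\to\int_M\chi\,S\,\omega^n$ for each $\chi\in C_c(M)$ and each $\chi_i\in C_c(M_i)$ converging to $\chi$ along the Gromov--Hausdorff approximations. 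By a partition of unity on $M$ we may take $\chi$ supported in a small ball $B(x,\delta)\subset M$; since $M$ is smooth, for $\delta$ small the rescaled ball $\delta^{-1}B(x,\delta)$ is as Gromov--Hausdorff close to the Euclidean unit ball as we wish, so for $i$ large Theorem~\ref{thm:chart} (applied at scale $\delta$, where $\mathrm{Ric}(g_i)>-g_i$ becomes $\mathrm{Ric}>-\delta^2(\cdot)$) gives holomorphic charts $F_i$ on $B(x_i,\delta)$ with K\"ahler potentials $\omega_i=i\partial\bar\partial\phi_i$ and $|\phi_i-(\mathrm{dist})^2|$ small. Passing to a subsequence, $F_i\to F_\infty$ and, by the Cheng--Yau gradient estimate as in the proof of Theorem~\ref{thm:chart}, $\phi_i\to\phi_\infty$ uniformly, where $F_\infty$ is a holomorphic chart on $M$ and $\omega_\infty=i\partial\bar\partial\phi_\infty$ with $\phi_\infty$ smooth. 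In these charts we regard all the metrics as K\"ahler forms on a fixed open set $\Omega\subset\mathbb{C}^n$.

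Next I rewrite the integral. On $\Omega$ we have $S_i\,\omega_i^n=c_n\,\mathrm{Ric}(\omega_i)\wedge\omega_i^{n-1}$ and $\mathrm{Ric}(\omega_i)=\mathrm{Ric}(\omega_\infty)-i\partial\bar\partial h_i$, where $h_i:=\log\bigl(\omega_i^n/\omega_\infty^n\bigr)$ is smooth (the ratio of two smooth positive densities). Hence
\[ \int\chi_i\,S_i\,\omega_i^n=c_n\int\chi_i\,\mathrm{Ric}(\omega_\infty)\wedge\omega_i^{n-1}\;-\;c_n\int h_i\,i\partial\bar\partial\chi_i\wedge\omega_i^{n-1}, \]
where in the second term we integrated by parts twice, using that $\omega_i^{n-1}$ is closed and $\chi_i$ has compact support in the chart. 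Since $\mathrm{Ric}(\omega_\infty)$ is a fixed smooth form, $\chi_i\to\chi$ in $C^2$, the currents $\omega_i^{n-1}=(i\partial\bar\partial\phi_i)^{n-1}$ have uniformly bounded mass on relatively compact subsets (see below) and converge weakly to $(i\partial\bar\partial\phi_\infty)^{n-1}=\omega_\infty^{n-1}$ by the Bedford--Taylor continuity of Monge--Amp\`ere products along a uniformly convergent sequence of bounded plurisubharmonic functions, the first term converges to $c_n\int\chi\,\mathrm{Ric}(\omega_\infty)\wedge\omega_\infty^{n-1}=\int\chi\,S\,\omega^n$. So the proposition reduces to showing that $\int h_i\,i\partial\bar\partial\chi_i\wedge\omega_i^{n-1}\longrightarrow 0$.

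The key fact is that $h_i\to 0$ in $L^1_{loc}(\Omega)$. Here the Ricci lower bound enters: $\mathrm{Ric}(\omega_i)>-\delta^2\omega_i$ at the rescaled scale gives $i\partial\bar\partial\bigl(\delta^2\phi_i+\Gamma-h_i\bigr)\ge 0$, where $\Gamma$ is a fixed smooth local potential for $\mathrm{Ric}(\omega_\infty)$; thus $\psi_i:=\delta^2\phi_i+\Gamma-h_i$ is plurisubharmonic. The positive part $h_i^+\le e^{h_i}=\omega_i^n/\omega_\infty^n$ is bounded in $L^1_{loc}$ because $\int_{\Omega'}\omega_i^n=\mathrm{vol}$ is bounded. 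For the negative part, run the Ricci flow of Proposition~\ref{prop:Ricflow1}: the complex structure is preserved, $\partial_t\log\det g_i(t)=-c\,S_i(t)$, the lower bound $S_i(t)\ge-2n$ is preserved along the flow by the maximum principle for the evolution of scalar curvature, and $\log\det g_i(1)=\Psi(\,\cdot\,|n)$ since $g_i(1)$ is $C^5$-close to Euclidean in these coordinates; integrating gives a uniform lower bound $\log(\omega_i^n/\mu_0)\ge-C$, hence $h_i\ge-C$. Therefore $\{\psi_i\}$ is bounded in $L^1_{loc}$, hence (being plurisubharmonic) locally uniformly bounded above, so a subsequence converges in $L^1_{loc}$ to a plurisubharmonic $\psi_\infty$; consequently $h_i$ converges in $L^1_{loc}$, and since $e^{h_i}=\omega_i^n/\omega_\infty^n$ then converges a.e. while $\omega_i^n\rightharpoonup\omega_\infty^n$ by Colding's volume continuity, the limit must satisfy $e^{h_\infty}\equiv 1$, i.e. $h_i\to 0$ in $L^1_{loc}$.

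It remains to transfer this convergence across the measures $i\partial\bar\partial\chi_i\wedge\omega_i^{n-1}$. Since $\chi_i$ is $C^2$-bounded, $|i\partial\bar\partial\chi_i\wedge\omega_i^{n-1}|\le C\,\omega_{Euc}\wedge\omega_i^{n-1}$ as measures, and the masses $\int_{\Omega'}\omega_{Euc}\wedge\omega_i^{n-1}$ are uniformly bounded: using $\omega_{Euc}=i\partial\bar\partial|z|^2$, $\omega_i=i\partial\bar\partial\phi_i$ and a nested chain of cutoffs, integration by parts trades one factor of $\omega_i$ for a factor of $\omega_{Euc}$ at the cost of a bounded constant (controlled by $\|\phi_i\|_{L^\infty}\le C$), and after $n-1$ steps one is left with $\int\omega_{Euc}^n<\infty$. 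Combining the uniform mass bound, the uniform lower bound $h_i\ge-C$, the bound $h_i^+\le e^{h_i}$ with $\{e^{h_i}\}$ uniformly integrable relative to $\mu_0$, and the $L^1_{loc}$ (hence in-measure) convergence $h_i\to 0$, one concludes $\int|h_i|\,\omega_{Euc}\wedge\omega_i^{n-1}\to 0$, which finishes the argument. I expect this last step to be the main obstacle: passing the convergence $h_i\to 0$ — which holds only in $L^1_{loc}$, not uniformly — through the measures $\omega_{Euc}\wedge\omega_i^{n-1}$, whose densities need not be uniformly bounded, will require an Egorov-type argument together with some control on the concentration of $\omega_i^{n-1}$ (e.g. higher integrability of the volume density), and it is precisely here that the Ricci-flow regularization behind Theorem~\ref{thm:chart} — which delivers the uniform lower bound on $\det g_i$ — is indispensable rather than merely convenient.
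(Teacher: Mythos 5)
Your reduction is sound as far as it goes, and after one more observation it lands exactly on the paper's key lemma: since the coordinate functions are harmonic with bounded gradient (Cheng--Yau), the inverse metric $g_i^{j\bar k}=\langle dz_{ij},d\bar z_{ik}\rangle$ is uniformly bounded, hence $\omega_{Euc}\le C\omega_i$ and your measure $\omega_{Euc}\wedge\omega_i^{n-1}$ is dominated by $C\,\omega_i^n$. (This also gives the pointwise bound $h_i\ge -C$ directly, without any Ricci-flow/scalar-curvature argument, since $h_i=\log|s|^2-\log|s_i|^2$ with $|s_i|^2=|dz_{i1}\wedge\cdots\wedge dz_{in}|^2_{\omega_i}\le C$ by Hadamard's inequality and $|s|^2$ bounded below on the smooth limit.) So your final step is precisely the statement $\int_{B(p_i,4)}\bigl|\log|s_i|^2-\log|s|^2\bigr|\,\omega_i^n\to 0$, which is Lemma~\ref{lm1} of the paper and is the heart of the proof; this is where the genuine gap lies. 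You have $h_i\to 0$ only in $L^1_{loc}$ with respect to Lebesgue measure, but you must integrate $|h_i|$ against $\omega_i^n=e^{h_i}\omega_\infty^n$, whose density is not uniformly bounded above (near a forming conical singularity it blows up like $|z|^{2\beta-2}$). What is needed is uniform integrability of $h_i^+e^{h_i}$ with respect to Lebesgue measure, an $L\log L$-type non-concentration estimate for the volume densities: Egorov does not supply it, Bishop--Gromov coverings only control $e^{h_i}$ itself, and the ``higher integrability of the volume density'' you invoke is exactly what is unavailable under a mere Ricci lower bound.

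The paper closes this gap by exploiting the plurisubharmonicity of $v_i=\log|s_i|^2+2\pi\epsilon u_i$ (Poincar\'e--Lelong plus the Ricci lower bound): Green's function estimates give a uniform $L^1(\omega_i^n)$ bound on $v_i$ (Claim~\ref{cl2}), and mollifying by the heat flow on the varying spaces yields the almost-monotonicity $\frac{dv_{it}}{dt}\ge-\Psi(t)$ and, by comparison with the weak limit, the non-concentration estimate \eqref{-11}, $\lim_{\lambda\to\infty}\liminf_i\int_{\{v_i\le-\lambda\}}v_i\,\omega_i^n=0$ --- exactly the missing uniform integrability. Note also that the paper identifies the limiting density not through volume convergence plus a.e.\ convergence (your conclusion $e^{h_\infty}\equiv 1$ itself needs a non-concentration input, since a.e.\ convergence plus weak convergence of measures only yields $e^{h_\infty}\le 1$), but through the Cheeger--Colding $L^2$ Hessian estimates, which give $\int\bigl|\,|s_i|^2-|s|^2\bigr|\to 0$ directly (Claim~\ref{cl1}). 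In summary, your decomposition $\mathrm{Ric}(\omega_i)=\mathrm{Ric}(\omega_\infty)-i\partial\bar\partial h_i$ followed by integration by parts is an equivalent repackaging of the paper's pairing of $\log|s_i|^2$ with $\Delta_{\omega_i}h\,\omega_i^n$, but the substantive analytic content --- the $L^1$ control of the log-densities against the varying measures $\omega_i^n$ --- is not established by your sketch.
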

\begin{remark}
It is clear from the proof that this proposition is local in nature. That is to say, the completeness of the K\"ahler metrics is not necessary, as long as $B(q_i, r)$ is relatively compact in $M_i$.
\end{remark}
\begin{proof}
It suffices to prove that there exists a subsequence of $M_i$ so that the proposition is true,
and we only need to prove the result locally near $p$. By suitable scaling, we may assume that $d_{GH}(B(p, \frac{1}{\epsilon^2}), B_{\mathbb{C}^n}(0, \frac{1}{\epsilon^2}))<\epsilon^2$ and $\mathrm{Ric}(M_i)\geq -\epsilon^2$. Here $\epsilon=\epsilon(n)$ is the small constant in Theorem \ref{thm:chart}.
Thus we have holomorphic charts $(z^i_1, ..., z^i_n)$ on $B(p_i, 10)$ so that the coordinate maps $(z^i_1, .., z^i_n): B(p_i, 10)\to\mathbb{C}^n$ give $\Psi(\epsilon|n)$-Gromov-Hausdorff approximations to their images. Let us assume that the holomorphic charts $(z^i_1, ..., z^i_n)$ converge to a chart $(z_1, ..., z_n)$ on $B(p, 8)$ as in Proposition~\ref{prop:limitchart}. This defines a complex structure $J$ on the ball $B(p,8)$. Note that a priori $g$ is just a Riemannian metric on $M$, however we have the following.

\begin{claim}
The metric $g$ on $M$ is compatible with the complex structure $J$ on $B(p, 8)$. That is to say, $g$ is a K\"ahler metric with respect to $J$.
\end{claim}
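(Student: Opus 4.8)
The plan is to realize $g$ as a K\"ahler metric with a global potential on the chart, obtained as the limit of the local K\"ahler potentials of the $\omega_i$. The statement is local, so after relabeling we may assume we are working near $p$ on a fixed ball $B(p,1/2)$. By the rescaling made at the start of the proof, Theorem~\ref{thm:chart} (together with Propositions~\ref{prop:chart1} and~\ref{prop:potential1}) provides, on $B(p_i,1)$, a uniformly bounded local K\"ahler potential for $\omega_i$: we have $\omega_i=i\partial\bar\partial\phi_i$, $\Delta_{g_i}\phi_i=2n$ and $|\phi_i|\le C$. By the Cheng--Yau gradient estimate $|\nabla_{g_i}\phi_i|\le C$ on $B(p_i,1/2)$, so after passing to a subsequence the $\phi_i$ converge uniformly (working in the common chart, hence also in $L^1_{\mathrm{loc}}$) to a Lipschitz function $\phi_\infty$. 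Since $\Delta_{g_i}\phi_i=2n$, the Cheeger--Colding stability of solutions of $\Delta u=f$ under non-collapsed Gromov--Hausdorff convergence gives $\Delta_g\phi_\infty=2n$ weakly, hence $\phi_\infty$ is smooth by elliptic regularity. In the holomorphic charts the $\phi_i$ are uniformly bounded strictly plurisubharmonic functions converging in $L^1_{\mathrm{loc}}$, so $\phi_\infty$ is plurisubharmonic with respect to $J$ and $\eta_0:=i\partial\bar\partial\phi_\infty$ is a smooth, closed, non-negative real $(1,1)$-form. It is in fact strictly positive: by the Bedford--Taylor continuity of the complex Monge--Amp\`ere operator under uniform convergence of bounded plurisubharmonic functions, $\eta_0^n=\lim(i\partial\bar\partial\phi_i)^n=\lim\omega_i^n=n!\lim dV_{g_i}=n!\,dV_g$, using Colding's volume convergence; so $\eta_0$ is the K\"ahler form of a smooth K\"ahler metric $\widetilde g$ on the chart.

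The claim then reduces to the identity $g=\widetilde g$, equivalently $g(J\cdot,\cdot)=\eta_0$ as $2$-tensors: since $\eta_0$ is closed of type $(1,1)$, this identity forces $g$ to be $J$-Hermitian with closed associated form, hence --- $J$ being integrable --- K\"ahler. As $g$ and $\widetilde g$ are both smooth, it is enough to check that $\omega_g:=g(J\cdot,\cdot)$ and $\eta_0$ pair identically against every smooth compactly supported real $(2n-2)$-form $\chi$ on the chart. Transplanting $\chi$ to a form $\chi_i$ on $M_i$ via the holomorphic charts and integrating by parts twice gives the exact identity $\int_{M_i}\omega_i\wedge\chi_i=\pm\int_{M_i}\phi_i\,(i\partial\bar\partial\chi_i)$, whose right-hand side is under control: $i\partial\bar\partial\chi_i$ is a top-degree form whose coefficient in the chart converges to that of $i\partial\bar\partial\chi$, and $\phi_i\to\phi_\infty$ uniformly, so it tends to $\pm\int\phi_\infty\,(i\partial\bar\partial\chi)=\int\eta_0\wedge\chi$. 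So the whole matter comes down to showing $\int_{M_i}\omega_i\wedge\chi_i\to\int\omega_g\wedge\chi$, that is, that the K\"ahler forms $\omega_i$ converge weakly, in the chart, to the $2$-form $\omega_g$ of the limit metric.

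I expect this last weak convergence to be the main obstacle, and it is precisely where having only a lower Ricci bound is felt: there is no Anderson-type $C^{1,\alpha}$ harmonic-radius control, so the $g_i$ need not converge to $g$ in $C^0$ and one cannot pass to the limit directly in the metric tensor. The potentials again provide the way around: since $\omega_i=i\partial\bar\partial\phi_i$ with $\phi_i\to\phi_\infty$ in $L^1_{\mathrm{loc}}$, we know for free that $\omega_i\to\eta_0$ as currents in the chart, and the point is to identify this current limit with $\omega_g$. For this one would combine (i) the volume identity $\eta_0^n=n!\,dV_g$, which pins down the mass of the limiting current, with (ii) the Ricci lower bound, invoked through Cheeger--Colding to exclude concentration of the trace measures $\mathrm{tr}\,\omega_i$ and --- using also that the coordinate functions $z_j$ are harmonic for $g$, $g_i$ and $\widetilde g$ alike --- to force $d_{g_i}$, which converges to $d_g$ by Gromov--Hausdorff convergence, also to converge to $d_{\widetilde g}$; as $g$ and $\widetilde g$ are smooth and have the same distance function, $g=\widetilde g$. (That $g$ is $J$-Hermitian emerges along the way, or can be obtained independently from the fact that every tangent cone of the smooth manifold $M$ is flat $\mathbb{C}^n$, making $g$ and $J$ pointwise compatible.) The genuinely delicate part is step (ii): upgrading the soft, current-level convergence $\omega_i\to\eta_0$ into an identification with the smooth limit metric $g$ itself.
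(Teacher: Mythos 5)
Your reduction is the same as the paper's: both arguments come down to showing that the smooth Riemannian limit $g$ coincides with the K\"ahler metric $\eta_0=i\partial\bar\partial\phi_\infty$ built from the limit of the local potentials of the $\omega_i$. The problem is that this identification --- which you yourself flag as ``the genuinely delicate part'' --- is the entire content of the claim, and your step (ii) does not establish it. The volume identity $\eta_0^n=n!\,dV_g$ only matches volume forms, and two distinct metrics can share a volume form; ``excluding concentration of $\mathrm{tr}\,\omega_i$'' and ``forcing $d_{g_i}\to d_{\widetilde g}$'' are restatements of the desired conclusion with no mechanism supplied; and the observation that the $z_j$ are harmonic for $g$, $g_i$ and $\widetilde g$ alike does not by itself relate $d_{\widetilde g}$ to $\lim d_{g_i}=d_g$. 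As written, the argument stops exactly where the work begins.

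The ingredient you are missing is the refined conclusion of Theorem~\ref{thm:chart}: the potential can be chosen so that $|\phi_i-r_i^2|<\Psi(\epsilon|n)$, where $r_i$ is the $g_i$-distance to the base point --- and this estimate is scale-invariant, so it can be reapplied after rescaling. The paper identifies $\eta_0$ with $\omega_g$ pointwise by blowing up at an arbitrary point $q$ of the smooth limit: rescaling by $1/\tau$ and invoking Theorem~\ref{thm:chart} at that scale produces potentials $u_{\tau i}$ close to $d_{g_{\tau i}}(\cdot,q)^2$, hence a limit potential $u_\tau$ close to $d_{g_\tau}(\cdot,q)^2$, which as $\tau\to0$ converges to the flat quadratic function of $g(q)$ on the tangent space; on the other hand, since $\phi_\infty$ is smooth, $i\partial\bar\partial u_\tau$ converges to the constant form $i\partial\bar\partial\phi_\infty(q)$. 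Equating the two blow-up limits at every $q$ gives $\eta_0=\omega_g$, hence the compatibility and closedness you want. (An alternative rigorous route, closer in spirit to your remark about harmonic coordinates, is the Cheeger--Colding $L^2$ convergence of inner products $\langle dz_{ij},dz_{ik}\rangle$ and $\langle dz_{ij},d\bar z_{ik}\rangle$ of gradients of harmonic functions --- this is exactly what the paper's Claim~\ref{cl1} uses --- but you would have to actually carry that computation out; the distance-function argument you sketch is not it.) Without the $|\phi-r^2|<\Psi$ estimate applied at all scales, or some equivalent device, your proof does not close.
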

\begin{proof}
Note that the functions $z^i_j$ are all holomorphic, hence harmonic, and so $z_1, .., z_n$ are all complex harmonic on $B(p, 8)$. Therefore, they are all smooth with respect to the Riemannian metric $g$. In particular, this shows that the complex structure $J$ is smooth with respect to $g$. In addition, it follows that the composition of chart maps $(z_j)^{-1}\circ (z^i_j)$ gives a holomorphic $\Psi(i^{-1})$-Gromov-Hausdorff approximation from $B(p_i,7)$ to its image in $B(p,8)$.

Let $u_i$ be such that $\sqrt{-1}\partial\overline\partial u_i =\omega_i$ on $B(p_i, 10)$, given by Theorem~\ref{thm:chart}. As $\Delta u_i = 2n$, $\nabla u_i$ is uniformly bounded on $B(p_i, 9.5)$, and we can assume that $u_i$ converges uniformly under the Gromov-Hausdorff convergence to a smooth function $u$ on $B(p, 9)$, satisfying $\Delta u = 2n$. Using our charts, the $u_i$ can be viewed as plurisubharmonic functions on $B(p,9)$ converging uniformly to $u$, and so $\omega=\sqrt{-1}\partial\overline\partial u$ is a closed positive $(1, 1)$ current with smooth coefficients. To prove the claim, it suffices to prove that the metric $g$ is the same as the K\"ahler metric $\omega$. Note that $\omega$ is a well defined form on $M$, independent of the choice of bounded K\"ahler potentials $u_i$ for $\omega_i$. This follows since if above we choose different potentials $u_i'$ converging to $u'$, then $v_i = u_i - u_i'$ are pluriharmonic, and so is their uniform limit. So $\sqrt{-1}\partial\bar\partial u' = \sqrt{-1}\partial\bar\partial u$.

To compare these two smooth metrics, we can blow up a point $p$ on $M$ and compare the metrics on the tangent space.
Let $\tau > 0$ be a small number. Let us rescale the distance on $M_i$ and $M$ by $\frac{1}{\tau}$. Let the rescaled manifolds be $(M^\tau_i, p_{\tau, i})$ and $(M^\tau, p_{\tau})$, and the metrics be $\omega_{\tau, i}$ and $\omega_\tau$, $g_\tau$.
Then by applying Theorem~\ref{thm:chart} again, on $B(p_{\tau, i}, 1)$ we have $\omega_{\tau, i} = \sqrt{-1}\partial\overline\partial u_{\tau, i}$. Let us say $u_{\tau, i}\to u_\tau$ on $B(p_\tau, 1)$. Notice that the limit of $g_\tau$ as $\tau\to 0$ is the Euclidean metric, and by Theorem~\ref{thm:chart} the limit potentials $u_\tau$ approach the distance squared $|z|^2$ from the origin as $\tau\to 0$. It follows that the limit of $\sqrt{-1}\partial\overline\partial u_\tau$ as $\tau\to 0$ is the K\"ahler form associated to the Euclidean metric. This implies our claim.
\end{proof}

From now on, for a function $u$ on $B(p, 7)$, we may also think it is defined on $B(p_i, 6)$ by lifting via the coordinate map.

\begin{claim}\label{cl1}
$\int_{B(p_i, 5)}|\langle dz^i_{k}, d\overline{z^i_j}\rangle -\langle dz_k, d\overline{z_j}\rangle|^2<\Psi(\frac{1}{i})$. \end{claim}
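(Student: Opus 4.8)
The claim says that the Hermitian metric coefficients $\langle dz_{ik},d\overline{z_{ij}}\rangle$, read off in the holomorphic chart, converge in $L^2\big(B(p_i,5),\omega_i^n\big)$ to the coefficients of the limit metric; by the previous Claim the latter is the smooth K\"ahler metric $\omega=i\partial\bar\partial u_\infty$, with $u_i\to u_\infty$ and $z^i_j\to z_j$ locally uniformly ($u_i$ being the potentials from Theorem~\ref{thm:chart}), and $\mathrm{Ric}(\omega_i)\ge-\epsilon^2\omega_i$ for the fixed small $\epsilon=\epsilon(n)$. I would deduce this from two inputs: weak convergence of the relevant measures, via pluripotential theory, and an upper bound on the limit coming from the Bochner formula. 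The first step is to reduce to gradients of harmonic functions. Each $z_{ik}$ is holomorphic, hence $\omega_i$-harmonic, and the K\"ahler condition gives the pointwise identity $\langle dz_{ik},d\overline{z_{ij}}\rangle\,\omega_i^n=c_n\, i\partial\bar\partial\big(z_{ik}\bar z_{ij}\big)\wedge\omega_i^{n-1}$, and likewise on $M$. By complex polarization it suffices to treat $\langle dw_i,d\bar w_i\rangle$ for $w_i$ ranging over the finite list $z_{ik}+\iota z_{ij}$ ($\iota^4=1$); writing $w_i=\psi_i+\sqrt{-1}\,\chi_i$ with $\psi_i,\chi_i$ real and $\omega_i$-harmonic, this is a universal linear combination of $|\nabla\psi_i|^2$ and $|\nabla\chi_i|^2$. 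Two properties of such a real harmonic $\psi_i$ are used: by the Cheng--Yau gradient estimate (valid under $\mathrm{Ric}(\omega_i)\ge-\epsilon^2\omega_i$ alone), and since the coordinate functions are $\Psi(\epsilon|n)$-Gromov--Hausdorff approximations hence uniformly bounded on $B(p_i,10)$, one has $|\nabla\psi_i|\le C(n)$ on $B(p_i,5)$; and by the Bochner formula, $\Delta_{\omega_i}|\nabla\psi_i|^2=2|\nabla^2\psi_i|^2+2\mathrm{Ric}(\nabla\psi_i,\nabla\psi_i)\ge-2\epsilon^2|\nabla\psi_i|^2\ge-C(n)\epsilon^2$, so $v_i:=|\nabla\psi_i|^2$ is ``almost subharmonic''.

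For the weak convergence: since $\psi_i$ is the real part of a holomorphic function, $\psi_i^2=\tfrac12|w_i|^2+(\text{pluriharmonic})$, hence $\psi_i^2$ is plurisubharmonic, uniformly bounded, and converges locally uniformly to $\psi^2$. By Proposition~\ref{prop:potential1} the potentials $u_i$ are uniformly bounded plurisubharmonic functions converging locally uniformly to $u_\infty$. The Bedford--Taylor continuity theorem for mixed Monge--Amp\`ere operators therefore applies to $|\nabla\psi_i|^2\,\omega_i^n=\tfrac12\Delta_{\omega_i}(\psi_i^2)\,\omega_i^n=c_n\, i\partial\bar\partial(\psi_i^2)\wedge(i\partial\bar\partial u_i)^{n-1}$ and gives the weak convergence of measures $v_i\,\omega_i^n\to v\,g^n$ (and $\omega_i^n\to g^n$), where $v=|\nabla\psi|^2_g$ is smooth; everything is transported to $M$ by the Gromov--Hausdorff approximations given by the charts, which is harmless since $g$ is smooth near $\overline{B(p,5)}$.

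The crux is upgrading this to strong $L^2$ convergence, and here the almost-subharmonicity enters. From $\Delta_{\omega_i}v_i\ge-C(n)\epsilon^2$ and $\mathrm{Ric}(\omega_i)\ge-\epsilon^2\omega_i$, the mean value inequality for almost-subharmonic functions gives $v_i(x)\le\big(1+\Psi(\epsilon^2 r^2|n)\big)\,\mathrm{vol}_{\omega_i}(B(x,r))^{-1}\int_{B(x,r)}v_i\,\omega_i^n+C(n)\epsilon^2 r^2$ for $B(x,2r)\subset B(p_i,6)$. Letting $i\to\infty$ for fixed $r$ (using the weak convergence above and $\mathrm{vol}_{\omega_i}(B(x_i,r))\to\mathrm{vol}_g(B(x,r))$) and then $r\to0$ (the error terms, of size $\Psi(\epsilon^2r^2|n)$ and $\epsilon^2r^2$, vanish, and $\mathrm{vol}_g(B(x,r))^{-1}\int_{B(x,r)}v\,g^n\to v(x)$ by continuity of $v$), one obtains $\limsup_i v_i(x_i)\le v(x)$ for all $x_i\to x\in B(p,5)$; a compactness argument then shows that for every $\eta>0$ one has $v_i\le v+\eta$ on $B(p_i,5)$ once $i$ is large. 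Combining this with $\int_{B(p_i,5)}v_i\,\omega_i^n\to\int_{B(p,5)}v\,g^n$, the elementary estimate $|v-v_i|\le(v-v_i)+2(v_i-v)_+$ forces $\int_{B(p_i,5)}|v_i-v|\,\omega_i^n\to0$, which the uniform bound $v_i\le C(n)$ upgrades to $\int_{B(p_i,5)}|v_i-v|^2\,\omega_i^n\to0$. Applying this to each $w_i$ and recombining by polarization proves the claim. (Alternatively, this $L^2$ convergence of the gradient pairings is an instance of the strong $H^{1,2}$-convergence of harmonic functions under non-collapsed Gromov--Hausdorff limits with Ricci bounded below.) The hardest step is this last one: everything rests on having expressed the metric coefficients through almost-subharmonic functions (Bochner, via the Ricci lower bound) together with the accompanying weak convergence, whose proof in turn relies on the uniform bound on the K\"ahler potentials from Proposition~\ref{prop:potential1}.
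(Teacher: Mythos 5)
Your proof is correct, but it takes a genuinely different route from the paper's. The paper handles Claim~\ref{cl1} in a few lines: by a covering argument it reduces to showing $\lim_{\rho\to0}\lim_{i\to\infty}\dashint_{B(q_i,\rho)}|\langle dz_{ik},d\overline{z_{ij}}\rangle-\langle dz_k,d\overline{z_j}\rangle|^2=0$; after recentering and rescaling by $1/\rho$, the smoothness and positive definiteness of the limit metric make the rescaled coordinates almost-linear harmonic functions on an almost-Euclidean ball, and the conclusion is then exactly the Cheeger--Colding $L^2$ estimate for harmonic approximations of splitting functions. In other words, the ``alternative'' you mention in your final parenthesis (strong $H^{1,2}$ convergence of harmonic functions under non-collapsed limits) is essentially the paper's actual proof. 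Your main argument --- polarize down to $|\nabla\psi_i|^2$ for harmonic $\psi_i$, obtain weak convergence of the measures $|\nabla\psi_i|^2\,\omega_i^n$ from Bedford--Taylor applied to $i\partial\bar\partial(\psi_i^2)\wedge(i\partial\bar\partial u_i)^{n-1}$ (using that $\psi_i^2$ is plurisubharmonic and that the potentials from Theorem~\ref{thm:chart} are uniformly bounded), then upgrade to $L^2$ via the Bochner mean-value bound $\limsup_i v_i(x_i)\le v(x)$ --- is sound, and it is in fact closer in spirit to the paper's own proof of Lemma~\ref{lm1} (weak convergence plus one-sided control yields strong convergence) than to the paper's proof of this claim. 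The Riemannian route is shorter given the Cheeger--Colding machinery and does not use the complex structure at all; yours trades that machinery for pluripotential theory and the bounded K\"ahler potentials, and avoids the rescaling and covering step. Two points you should make explicit in a write-up: the domain bookkeeping needed to convert weak convergence of measures into convergence of integrals over the metric balls $B(p_i,5)$ and $B(x_i,r)$ (choose radii whose spheres are null sets for the limit measure), and the compactness argument promoting the pointwise statement $\limsup_i v_i(x_i)\le v(x)$ to the uniform bound $v_i\le v+\eta$ on $B(p_i,5)$ for large $i$, which uses the continuity of $v$.
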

\begin{proof}
 Pick $q\in B(p, 6)$ and $M_i\ni q_i\to q$.
From a standard covering argument, it suffices to prove that \begin{equation}\label{-1}\lim\limits_{\rho\to 0}\lim\limits_{i\to\infty}\dashint_{B(q_i, \rho)}|\langle dz^i_{k}, d\overline{z^i_{j}}\rangle -\langle dz_k, d\overline{z_j}\rangle|^2=0.\end{equation}
By subtracting constants, we may assume $z^i_{j}(q_i) = z_j(q) = 0$ for all $i, j$, and in addition, applying a linear transformation we can assume that $\langle dz_k, d\bar z_j\rangle (q) = \delta_{kj}$.
Let us rescale the distance on $(M_i, q_i)$ and $(M, q)$ by $\frac{1}{\rho}$. We also rescale $z^i_{j}, z_j$ by $\frac{1}{\rho}$. As $M$ is a smooth manifold, $\langle dz_k, d\overline{z_j}\rangle$ is a smooth function and so after scaling we have $\langle dz_k, d\overline{z_j}\rangle = \delta_{jk} + \Psi(\rho)$ on $B(q,1)$. It follows that for sufficiently large $i$, the $z^i_j$ define a $\Psi(\rho)$-Gromov-Hausdorff approximation to the Euclidean ball, and so by \cite{CC}, we have
\begin{equation}\label{eq:CCestimate}
  \dashint_{B(q_i,1)} | \langle dz^i_k, d\bar z^i_j\rangle - \delta_{jk}|^2 < \Psi(\rho).
\end{equation}
The required equality \eqref{-1} follows from this.
\end{proof}

Set $s_i = dz^i_{1}\wedge dz^i_{2}\wedge\ldots\wedge dz^i_{n}$ and $s = dz_{1}\wedge dz_{2}\wedge\ldots\wedge dz_{n}$. Then Claim \ref{cl1} implies that \begin{equation}\label{-2}\int_{B(p_i, 5)}\left||s|^2-|s_i|^2\right|<\Psi(\frac{1}{i}).\end{equation}
\begin{lemma}\label{lm1}
$\lim\limits_{i\to\infty}\int_{B(p_i, 4)}\left|\log |s_i|^2-\log |s|^2\right|\omega_i^n = 0$.
\end{lemma}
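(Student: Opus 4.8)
The plan is to reduce the statement to a uniform integrability bound for $v_i:=\log|s_i|^2$ against the measures $\omega_i^n$, which I will extract from the lower bound on the scalar curvature via the weak Harnack inequality.

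First I would pin down the two sides. On the limit, $(z_1,\dots,z_n)$ is, as established above, a holomorphic coordinate chart on $B(p,8)$ for the smooth complex structure $J$, and $\omega=\sqrt{-1}\partial\bar\partial u$ is a smooth K\"ahler form there; hence $|s|^2=\det(\omega^{j\bar k})$ in these coordinates is smooth and strictly positive, so $c_0\le|s|^2\le C_0$ on $\overline{B(p,5)}$ for some $0<c_0\le C_0$, and, pulled back to $B(p_i,5)$ via the charts, $\log|s|^2$ is uniformly bounded. On $M_i$, the chart $(z_{i1},\dots,z_{in})$ from Theorem~\ref{thm:chart} is a nondegenerate coordinate system, so $|s_i|^2=\det(\omega_i^{j\bar k})$ is smooth and strictly positive, $v_i$ is a genuine smooth function with $\sqrt{-1}\partial\bar\partial v_i=\mathrm{Ric}(\omega_i)$, and tracing with respect to $\omega_i$ and using $\mathrm{Ric}(\omega_i)>-\omega_i$ gives $\Delta_{\omega_i}v_i\ge-C(n)$; moreover the Cheng--Yau gradient estimate applied to the harmonic functions $z_{ij}$ bounds $|s_i|^2$ from above on $B(p_i,8)$, so $v_i\le C$ there.

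Next I would prove a uniform bound $\|v_i\|_{L^q(B(p_i,4),\omega_i^n)}\le C$ for some fixed $q=q(n)>1$. By \eqref{-2} and $|s|^2\ge c_0$ we have $\int_{B(p_i,2)}|s_i|^2\,\omega_i^n\ge c_0\,\mathrm{vol}(B(p_i,2))-\Psi(1/i)$, and since $\mathrm{vol}(B(p_i,2))$ stays bounded below, $\sup_{B(p_i,2)}|s_i|^2\ge c_0/3$ for large $i$, i.e. $\inf_{B(p_i,2)}(-v_i)\le-\log(c_0/3)$. Now $-v_i+C\ge0$ is a nonnegative supersolution of $\Delta u\le C(n)$ on $B(p_i,8)$, and since these balls are noncollapsed with $\mathrm{Ric}>-1$ they satisfy volume doubling and a scale-invariant Poincar\'e inequality; the De Giorgi--Nash--Moser weak Harnack inequality therefore gives
\[ \Big(\dashint_{B(p_i,4)}(-v_i)^q\,\omega_i^n\Big)^{1/q}\le C\big(\inf_{B(p_i,2)}(-v_i)+C\big)\le C \]
for some $q>1$, which together with $v_i\le C$ is the claimed bound.

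Finally I would assemble the estimate. Put $f_i=|s_i|^2/|s|^2$; by \eqref{-2} and $|s|^2\ge c_0$, $\|f_i-1\|_{L^1(B(p_i,4),\omega_i^n)}\to0$, hence $\omega_i^n(\{f_i<1/2\}\cap B(p_i,4))\to0$. On $\{f_i\ge1/2\}$ one has $|\log f_i|\le2|f_i-1|$, so $\int_{\{f_i\ge1/2\}}|\log f_i|\,\omega_i^n\le2\|f_i-1\|_{L^1}\to0$. On $\{f_i<1/2\}$ one has $|\log f_i|\le|v_i|+|\log|s|^2|\le|v_i|+C$; since this set has $\omega_i^n$-measure tending to $0$, H\"older's inequality with the uniform $L^q$ bound gives $\int_{\{f_i<1/2\}}|v_i|\,\omega_i^n\le\|v_i\|_{L^q}\,\omega_i^n(\{f_i<1/2\})^{1-1/q}\to0$, and $\int_{\{f_i<1/2\}}C\,\omega_i^n\to0$ as well, so $\int_{B(p_i,4)}|\log|s_i|^2-\log|s|^2|\,\omega_i^n\to0$. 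The main difficulty is precisely this uniform integrability: an $L^1$ bound on $|s_i|^2-|s|^2$ gives no control over $\log|s_i|^2$ on the small region where $|s_i|^2$ is tiny, and it is there that one must use that $v_i$ is essentially a supersolution — equivalently, that the scalar curvature is bounded below — which relies on the chart being an honest coordinate system, so that $|s_i|^2$ does not vanish. (If one preferred to avoid this nondegeneracy, the same uniform integrability follows from pluripotential theory: $v_i$ differs by a bounded function from the plurisubharmonic $v_i+\phi_i$, where $\phi_i$ is the bounded potential of Theorem~\ref{thm:chart}, $\omega_i^n$ is the Monge--Amp\`ere measure of $\phi_i$, and a Chern--Levine--Nirenberg estimate applied to the truncations $\max(v_i+\phi_i,-K)$ does the job.)
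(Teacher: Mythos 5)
Your argument is correct, but it takes a genuinely different route from the paper's. The paper first proves a uniform $L^1$ bound on $v_i=\log|s_i|^2+2\pi\epsilon u_i$ via a Green's function representation (Claim~\ref{cl2}), and then runs H\"ormander's classical argument for sequences of plurisubharmonic functions, replacing convolution by heat-kernel mollification on the varying manifolds $M_i$: the almost-monotonicity $\frac{d}{dt}v_{it}\geq-\Psi(t)$, the convergence $H_i\to H$ of heat kernels, and Colding's volume convergence combine to give \eqref{eq:b1} and hence the non-concentration statement \eqref{-11}, which together with \eqref{-2} yields the lemma. You instead upgrade $L^1$ to $L^q$ for a fixed $q>1$: since $\Delta_{\omega_i}\log|s_i|^2=2\pi S_i\geq -C(n)$ and $v_i\leq C$, the function $-v_i+C$ is a nonnegative supersolution, its infimum on $B(p_i,2)$ is pinned by \eqref{-2} and $|s|^2\geq c_0$, and the weak Harnack inequality gives a uniform $L^q$ bound; H\"older on the small set $\{|s_i|^2<\tfrac12|s|^2\}$ then finishes. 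Both proofs rest on the same two inputs --- the $L^1$ closeness \eqref{-2} plus positivity of $|s|^2$, and the almost-(pluri)subharmonicity of $\log|s_i|^2$ coming from the Ricci lower bound --- but your route avoids the measure-theoretic limit $w$, the heat kernel convergence, and the volume convergence theorem, and it delivers a stronger quantitative conclusion (uniform $L^q$ rather than bare uniform integrability). The one point you should nail down with a reference is that the weak Harnack inequality holds with some exponent $q>1$ (rather than merely some $q>0$) uniformly on these balls; this is standard under volume doubling and the Poincar\'e inequality, both of which follow from $\mathrm{Ric}>-1$ and noncollapsing (the Euclidean range $1\leq q<\nu/(\nu-2)$ with $\nu=2n$ carries over, e.g.\ Saloff-Coste), but it is the load-bearing step of your argument. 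Note also that the paper's intermediate estimate \eqref{eq:b1} is reused in the Appendix, so if one replaced the paper's proof by yours one would need to recover that statement separately (your $L^q$ bound does imply it).
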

\begin{proof}
The Poincar\'e-Lelong equation says \begin{equation}\label{-237}\frac{\sqrt{-1}}{2\pi}\partial\overline\partial\log|s_i|^2 =\mathrm{Ric}(M_i)\geq -\epsilon\omega_i.\end{equation}
According to Theorem \ref{thm:chart}, we may assume that on $B(p_i, 10)$, $\omega_i = \sqrt{-1}\partial\overline\partial u_i$ and $u_i\to u$ uniformly. Let us say $|u_i|\leq 1000$ for all $i$.
By \eqref{-237}, $\log|s_i|^2+2\pi\epsilon u_i$ is plurisubharmonic on $B(p_i, 10)$. Set
\begin{equation}\label{eq:vi}
  \begin{aligned}
    v_i&=\log |s_i|^2+2\pi\epsilon u_i, \\
    v&=\log |s|^2+2\pi\epsilon u.
  \end{aligned}
\end{equation}
We need to show that $v_i$ converges to $v$ in $L^1$.
If we were working on the same space, then the lemma would follow from the standard theory of plurisubharmonic functions since by (\ref{-2}), $\log |s_i|^2$ cannot go uniformly to $-\infty$.

\begin{claim}\label{cl2}
There exists a constant $C$, independent of $i$, so that $\int_{B(p_i, 9)}|v_i| \leq C$.
\end{claim}
\begin{proof}The argument will be similar to Proposition $2.7$ of \cite{L4}. In the proof $C$ will be a large constant independent of $i$. The value might change from line to line.
Since $v_i$ has an upper bound independent of $i$, it suffices to prove $\int_{B(p_i, 7)}v_i$ is uniformly bounded from below.
Let $G_i(x, y)$ be the Green's functions on $B(p_i, 10)$. Set \begin{equation}\label{-3}F_i(x) = v_i(x) + \int_{B(p_i, 10)}G_i(x, y)\Delta v_i(y)dy.\end{equation} Then $F_i$ is harmonic, with the same boundary values as $v_i$. By the maximum principle, $F_i\leq \sup\limits_{B(p_i, 10)}v_i$. Let us say $F_i\leq C$. From (\ref{-2}), we can find a point $x_i\in B(p_i, 1)$ so that $v(x_i)\geq -C$ for all $i$. Since $v_i$ is subharmonic, $F_i(x_i)\geq -C$. Then by the Cheng-Yau gradient estimate \cite{CY}, we have $|F_i|\leq C$ on $B(p_i, 9)$. Inserting $x = x_i$ in (\ref{-3}), we also obtain that $\int_{B(p_i, 9)}\Delta v_i(y) dy\leq C$, using the lower bound for the Green's function. By changing the radius $10$ to $11$ in (\ref{-3}), we may assume that $\int_{B(p_i, 10)}\Delta v_i(y) dy\leq C$.

By integrating (\ref{-3}), we find
\begin{equation}\label{-4}\int_{B(p_i, 9)}v_i(x) = \int_{B(p_i, 9)}F_i(x) - \int_{B(p_i, 10)}\left(\int_{B(p_i, 9)}G_i(x, y)dx\right)\Delta v_i(y)dy\geq -C,
\end{equation}
using the upper bound for the Green's function.
\end{proof}

To complete the proof of Lemma~\ref{lm1} we will use the argument in H\"ormander~\cite{Ho}, Theorem $4.1.9$ on page $94$.
Because of Claim \ref{cl2}, by passing to a subsequence, we may assume that $v_i$ converges weakly as a measure to $w$ on $B(p, 7)$.
By this we mean that for any smooth function $u$ with compact support on $B(p, 9)$, $\int_{M_i}uv_i\to \int_M uw$. We emphasize that a priori, $w$ is merely a measure.
In H\"ormander's proof, the convolution is used to mollify the functions. Since $M_i$ is not necessarily Euclidean, we consider the heat flow.
More precisely, let $\phi$ be a smooth cut-off function on $M$ so that $\phi = 1$ on $B(p, 8)$ and $\phi = 0$ outside $B(p, 9)$. Recall that we use the charts to identify the different balls $B(p_i, 9)$, and by the gradient estimate we have a uniform lower bound for $\omega_i$ in the charts. It follows that we can assume $|\phi|, |\nabla\phi|, |\Delta\phi|<C$ with respect to the metric $\omega$ on $M$ as well as with respect to the metrics $\omega_i$.
Define
\begin{equation}
  \begin{aligned} \label{-5}v_{it}(x) &= \int_{M_i} H_i(x, y, t)\phi(y)v_i(y)dy, \\
    w_t(x) &= \int_M H(x, y, t)\phi(y)w(y)dy,
  \end{aligned}
\end{equation}
where $H_i, H$ are heat kernels on $M_i$ and $M$ respectively. Note that for any $t>0$, $w_t$ is a function.
 \begin{equation}\begin{aligned}\frac{dv_{it}}{dt} = &\int_{M_i}H_i(x, y, t)v_i(y)\Delta\phi(y)dy+ \int_{M_i}H_i(x, y, t)\phi(y)\Delta v_i(y)dy\\&+ 2\int_{M_i}H_i(x, y, t)\langle\nabla v_i(y), \nabla\phi(y)\rangle dy.\end{aligned}\end{equation}
Notice that $\Delta\phi$ has support outside of $B(p_i, 7)$.  As $v_i(y)$ has a uniform $L^1$ bound, according to Li-Yau's heat kernel estimate \cite{LY}, on $B(p_i, 6)$, the first term will be bounded by $\Psi(t)$. The second term is nonnegative, since $v_i$ is subharmonic. For the last term, we can do integration by parts to transform derivatives to the heat kernel and $\phi$. By the estimate for the derivative of the heat kernel \cite{Ko}, we find that on $B(p_i, 6)$, the last term is bounded by $\Psi(t)$. Therefore, we find that on $B(p_i, 6)$, \begin{equation}\label{-6}\frac{dv_{it}}{dt}\geq -\Psi(t).\end{equation}

Let $\eta$ be a nonnegative smooth function so that $\eta = 1$ on $B(p, 4)$, $\eta = 0$ outside $B(p, 5)$. According to the assumption,
\begin{equation}\label{-7}\int_{M_i}v_i\eta\to \int_Mw\eta.\end{equation}
It is clear from (\ref{-5}) that
\begin{equation}\label{-8}\left|\int_M(w\eta-w_t\eta)\right|\leq \Psi(t).\end{equation}
For each fixed $t>0$, we have $H_i(x, y, t)\to H(x, y, t)$. Therefore, $v_{it}\to w_t$ uniformly on each compact set. By the almost monotonicity (\ref{-6}), given any $\delta>0$, there exists $a>0$ sufficiently small so that on $B(p_i, 6)$, \begin{equation}\label{-9}w_a-v_i+\delta>0\end{equation} for all sufficiently large $i$. Note that by the volume convergence theorem of Colding \cite{C}, we have \begin{equation}\label{-10}\int_{M_i}w_a\eta \to \int_Mw_a\eta.\end{equation}
Putting (\ref{-7})-(\ref{-10}) together, we find that for sufficiently large $i$, \begin{equation}\label{eq:b1}
  \int_{M_i}|w_a-v_i+\delta|\eta\leq \Psi(a, \delta).
\end{equation}
Then we obtain that \begin{equation}\label{-11}\lim\limits_{\lambda\to+\infty}\liminf\limits_{i\to\infty}\int_{E_i^\lambda}v_i = 0,\end{equation} where $E_i^\lambda = \{x\in B(p_i, 4)|v_i(x)\leq -\lambda\}$.  Notice that on $B(p, 4)$, $\log |s|^2$ is bounded from below. Recall that $v_i, v$ are defined in \eqref{eq:vi}. Lemma \ref{lm1} follows by putting (\ref{-2}) and (\ref{-11}) together.
\end{proof}

Let $h$ be a smooth function of compact support on $B(p, 4)$. Note that by the Poincar\'e-Lelong equation, the scalar curvature is given by
\begin{equation}\label{-12}S_i = \frac{1}{2\pi}\Delta\log |s_i|^2.\end{equation} Therefore
\begin{equation}\label{-13}\begin{aligned}2\pi \left(\int_{M_i}hS_i -\int_MhS\right) &= \int_{M_i}\log |s_i|^2\Delta h - \int_M\log |s|^2\Delta h\\&=\int_{M_i}(\log |s_i|^2-\log |s|^2)\Delta h+\int_{M_i}\log |s|^2(\Delta_{\omega_i}h-\Delta_\omega h)\\&+(\int_{M_i}\log |s|^2\Delta_\omega h-\int_M\log |s|^2\Delta h).\end{aligned}\end{equation} As we noticed before, $\Delta h$ with respect to $\omega_i$ is uniformly bounded. Therefore, according to Lemma \ref{lm1}, the first term approaches zero as $i\to\infty$. Note that by Claim \ref{cl1},  $\int_{M_i}|\Delta_{\omega}h-\Delta_{\omega_i}h|\to 0$. Therefore, the second term converges to zero. Finally, the last term converges to zero by the volume convergence theorem of Colding \cite{C}. We obtained the following.
\begin{lemma}\label{lm2}
Let $h$ be a smooth function of compact support on $B(p, 4)$. Then $\lim\limits_{i\to\infty}\int_{M_i}hS_i = \int_MhS$.
\end{lemma}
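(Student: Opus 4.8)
The proof is essentially immediate from the decomposition already set up in \eqref{-13}: using the Poincar\'e--Lelong equation \eqref{-12} to write $S_i = \tfrac{1}{2\pi}\Delta_{\omega_i}\log|s_i|^2$ and $S = \tfrac{1}{2\pi}\Delta_\omega\log|s|^2$, and integrating by parts against $h$, it suffices to show that each of the three terms on the right-hand side of \eqref{-13} tends to zero as $i\to\infty$. I would verify these term by term.

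For the first term, $\int_{M_i}(\log|s_i|^2 - \log|s|^2)\,\Delta_{\omega_i}h$, the key point is that $\Delta_{\omega_i}h$ is uniformly bounded on $\mathrm{supp}\,h$: indeed $h$ is smooth in the fixed holomorphic chart supplied by Theorem~\ref{thm:chart}, and the coordinate functions are harmonic with uniformly bounded gradient, so the bound transfers to the $\omega_i$-Laplacian. Hence this term is dominated by $\int_{\mathrm{supp}\,h}\bigl|\log|s_i|^2 - \log|s|^2\bigr|\,\omega_i^n$, which goes to zero by Lemma~\ref{lm1}. For the second term, $\int_{M_i}\log|s|^2\,(\Delta_{\omega_i}h - \Delta_\omega h)$, I would use that $\log|s|^2$ is a fixed bounded function on a neighborhood of $\mathrm{supp}\,h$ (it is bounded above since $|s|^2$ is smooth, and bounded below since $(z_1,\dots,z_n)$ is a genuine harmonic chart there, as established above, so $|s|^2$ is bounded away from $0$), together with the estimate $\int_{M_i}|\Delta_{\omega_i}h - \Delta_\omega h|\to 0$, which follows from Claim~\ref{cl1} (the $L^2$-convergence of the metric coefficients $\langle dz_{ik}, d\overline{z_{ij}}\rangle$ to those of the limit). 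The third term, $\int_{M_i}\log|s|^2\,\Delta_\omega h - \int_M\log|s|^2\,\Delta_\omega h$, is the difference of the integrals of one fixed bounded continuous function over $(M_i,\omega_i)$ and over $(M,\omega)$, which vanishes in the limit by the volume convergence theorem of Colding~\cite{C}.

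The only genuinely nontrivial input is the uniform-in-$i$ control of $\log|s_i|^2$ needed to handle the first term, since a priori these subharmonic functions could drift to $-\infty$ along the sequence; this is exactly what Lemma~\ref{lm1} (via the $L^1$-bound of Claim~\ref{cl2} and the heat-flow mollification argument replacing convolution in H\"ormander's proof) supplies. Granting Lemma~\ref{lm1} and Claim~\ref{cl1}, the remaining steps are routine bookkeeping of the three error terms, and the lemma follows.
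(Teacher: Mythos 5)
Your proposal is correct and follows essentially the same route as the paper: the decomposition \eqref{-13} via the Poincar\'e--Lelong identity, with the first term handled by Lemma~\ref{lm1} together with the uniform bound on $\Delta_{\omega_i}h$, the second by the $L^2$ metric-coefficient convergence of Claim~\ref{cl1}, and the third by Colding's volume convergence theorem. Your added remarks on why $\log|s|^2$ is bounded near $\mathrm{supp}\,h$ are consistent with what the paper uses implicitly.
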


Note that $S_i + 2n\geq 0$ for all $i$.
For any $r_1<r<r_2$, we can find smooth functions $f$ and $g$ so that $0\leq f, g\leq 1$; $f = 1$ on $B(p, r_1)$, $f$ has compact support on $B(p, r)$; $g=1$ on $B(p, r)$, $g$ has compact support on $B(p, r_2)$. For sufficiently large $i$ we have \begin{equation}\label{-14}\int_{M_i}f(S_i+2n)\leq \int_{B(p_i, r)}(S_i+2n)\leq \int_{M_i}g(S_i+2n).\end{equation}
We can apply Lemma \ref{lm2}. Letting $i\to\infty$ and $r_1, r_2\to r$, we obtain the proof of
Proposition \ref{prop:scalar}.
\end{proof}

Corollary~\ref{cor:Sintsmall} is immediate from Proposition~\ref{prop:scalar}. We will prove Proposition~\ref{prop:Sintbdd} after Proposition~\ref{prop:cone2}.

Finally we prove Proposition~\ref{prop:Cn}. We state it again here for convenience.
\begin{prop}\label{prop:gapthm}
There exists $\epsilon(n)>0$ so that if $M^n$ is a complete noncompact K\"ahler manifold with $\mathrm{Ric}\geq 0$ and $\lim\limits_{r\to\infty} r^{-2n}\mathrm{vol}(B(p, r))\geq \omega_{2n}-\epsilon$, then $M$ is biholomophic to $\mathbb{C}^n$.
\end{prop}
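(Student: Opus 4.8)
The plan is to combine Theorem~\ref{thm:chart} with a standard blow-down/dimension-reduction argument to produce enough holomorphic functions on $M$ to realize it as $\mathbb{C}^n$. First I would exploit the volume rigidity of Cheeger-Colding: since $\mathrm{Ric}\geq 0$ and the volume ratio is monotone non-increasing, the hypothesis $\lim_{r\to\infty} r^{-2n}\mathrm{vol}(B(p,r))\geq \omega_{2n}-\epsilon$ forces $r^{-2n}\mathrm{vol}(B(p,r)) \geq \omega_{2n}-\epsilon$ for \emph{all} $r>0$. Thus every rescaled ball $(M, p, \lambda^{-2}g)$ has almost-Euclidean volume at every scale, and in particular (choosing $\epsilon$ small relative to the constant in Theorem~\ref{thm:chart}, using volume convergence to get Gromov-Hausdorff closeness to $B_{\mathbb{C}^n}(0,\epsilon^{-1})$ on large balls) Theorem~\ref{thm:chart} applies at every point and every scale. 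The tangent cone at infinity is then isometric to $\mathbb{C}^n$, and all tangent cones are $\mathbb{R}^{2n}$, so $M$ is a smooth manifold diffeomorphic to $\mathbb{R}^{2n}$ with a global holomorphic chart structure near each point.

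The key step is to upgrade local holomorphic charts to \emph{global} holomorphic functions that embed $M$ properly into $\mathbb{C}^n$. The plan is the approach of \cite{L2}: using the almost-Euclidean geometry, on each ball $B(p, R)$ Theorem~\ref{thm:chart} (after rescaling) gives holomorphic coordinates $(z_1^R,\dots,z_n^R)$ that are $\Psi$-Gromov-Hausdorff approximations, together with a good K\"ahler potential comparable to $r^2$. I would then solve a $\bar\partial$-problem with a weight of the form $(2n+\delta)\log(1+|z^R|^2/R^2)$-type (or $r^2$-type plurisubharmonic weight built from the potential) to correct the local coordinates $z_j^R$ to genuinely global holomorphic functions $f_j$ on $M$, controlling the $L^2$ norm of $f_j$ against $R^2$ so that they have polynomial growth of degree one. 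Hörmander's $L^2$-estimate applies because the weight can be made strictly plurisubharmonic with the Ricci term helping (here $\mathrm{Ric}\geq 0$ is used). One gets $n$ holomorphic functions $f_1,\dots,f_n$ of linear growth whose differentials are close to those of the local Euclidean coordinates at all scales, hence $F=(f_1,\dots,f_n):M\to\mathbb{C}^n$ is a proper holomorphic map.

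It remains to show $F$ is a biholomorphism. Properness plus the scale-invariant closeness of $F$ to a Gromov-Hausdorff approximation shows $F$ is proper of topological degree one (the preimage of a generic point is a single point by the injectivity statement, argued exactly as in Proposition~\ref{prop:limitchart}/Proposition~6.1 of \cite{L1}, using the logarithmic poles trick to separate points at every scale). A proper degree-one holomorphic map between complex manifolds with $M$ normal (smooth) is a biholomorphism onto its image, and the image is all of $\mathbb{C}^n$ since $F$ is proper and $\mathbb{C}^n$ is connected. This gives $M\cong\mathbb{C}^n$.

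\textbf{Main obstacle.} The hard part is the $L^2$-construction of global holomorphic functions with sharp linear growth and the verification that $F$ is \emph{injective} (not merely a local biholomorphism with image of full measure): the injectivity requires running the point-separation construction uniformly across all scales simultaneously, which is where the scale-invariant almost-Euclidean hypothesis and the uniform constants in Theorem~\ref{thm:chart} are essential. Controlling the weight so that Hörmander's estimate yields functions of growth \emph{exactly} linear — not merely some polynomial rate — so that the count ``$n$ functions'' is correct and $F$ is non-degenerate, is the delicate point; this is precisely the content of Proposition~3.1 and its consequences in \cite{L2}, adapted here to the Ricci-lower-bound setting via Theorem~\ref{thm:chart} in place of the bisectional curvature assumption.
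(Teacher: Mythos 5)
Your overall strategy coincides with the paper's: reduce to Theorem 4.1 of \cite{L2}, with Theorem~\ref{thm:chart} supplying the holomorphic charts and K\"ahler potentials that nonnegative bisectional curvature provided there, and your opening observation (Bishop--Gromov monotonicity plus volume convergence forces almost-Euclidean Gromov--Hausdorff closeness at every point and every scale, so Theorem~\ref{thm:chart} applies after any rescaling) is exactly the reason the transfer works. However, the two points you flag as ``delicate'' and defer are precisely where specific ideas must be inserted, and as written the proposal does not close them. The first is the sharp growth control: the construction in \cite{L2} of proper holomorphic functions of exactly linear growth rests on a three circle theorem (convexity of $\log\sup_{B(p,r)}|f|$ in $\log r$), which in \cite{L2} is proved from the bisectional curvature lower bound and is false under a bare Ricci lower bound; Theorem~\ref{thm:chart} is not a substitute for it. The paper's fix is to invoke the three circle theorem of Donaldson--Sun \cite{DS2}, Lemma 4.1, which holds on balls Gromov--Hausdorff close to metric cones --- applicable here at all scales by the almost-Euclidean hypothesis. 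Without this ingredient the step ``the $f_j$ have exactly linear growth'' is a genuine gap.

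The second missing idea is non-degeneracy. You propose to prove injectivity by point separation and then apply a degree-one criterion, but before any of that one must rule out that $F=(v_1,\dots,v_n)$ is degenerate, i.e.\ that $dv_1\wedge\cdots\wedge dv_n$ vanishes; if it does, fibers are positive-dimensional and no separation or degree argument applies. The paper's resolution: this $n$-form is a limit of nowhere-vanishing $n$-forms coming from the charts of Theorem~\ref{thm:chart}, so by a Hurwitz-type argument it is either nowhere zero or identically zero; if identically zero, the proper map $F$ would have compact fibers of positive dimension, which is impossible because Theorem~\ref{thm:chart} yields bounded (strictly plurisubharmonic) K\"ahler potentials on arbitrarily large balls, so $M$ carries no compact positive-dimensional subvarieties. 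Once the Jacobian is nowhere zero, $F$ is a proper local biholomorphism onto $\mathbb{C}^n$, hence a covering map, hence a biholomorphism by simple connectivity of $\mathbb{C}^n$ --- a cleaner route to the conclusion than the generic-fiber degree-one argument you sketch.
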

The argument is very similar to \cite[Theorem 4.1]{L2}, with the three circle theorem replaced by the three annulus type result from Donaldson-Sun~\cite[Proposition 3.7]{DS2}. In our setting, the relevant statement is the following.
\begin{lemma}\label{lem:3annulus}
  Given $\delta\in (0,1)$ there exists $\epsilon > 0$ with the following property. Suppose that $\lim\limits_{r\to\infty} r^{-2n}\mathrm{vol}(B(p, r))\geq \omega_{2n}-\epsilon$. Then for any $r > 0$, and any holomorphic function $f$ on $B(p,2r)$ we have that
  \[
      \dashint_{B(p,r)} |f|^2 \geq 2^{2(1-\delta)} \dashint_{B(p,r/2)} |f|^2 \,\text{ implies }\,
      \dashint_{B(p,2r)} |f|^2 \geq 2^{2(1-\delta)} \dashint_{B(p,r)} |f|^2,
    \]
    and
    \[
      \dashint_{B(p,r)} |f|^2 \geq 2^{2(1+\delta)} \dashint_{B(p,r/2)} |f|^2 \,\text{ implies }\,
      \dashint_{B(p,2r)} |f|^2 \geq 2^{2(1+\delta)} \dashint_{B(p,r)} |f|^2.
     \]
   \end{lemma}
   \begin{proof}
     By the volume monotonicity we have $r^{-2n}\mathrm{vol}(B(p,r))\geq \omega_{2n} - \epsilon$ for all $r$. After rescaling, we can assume that in the statement of the Lemma we have $r=1$. We can then argue by contradiction, just as in \cite[Proposition 3.7]{DS2} (see also Ding~\cite[Theorem 0.7]{Ding}). If the first conclusion were to fail, then we could extract a limiting harmonic function $f$ on the Euclidean ball $B_{\mathbb{C}^n}(0,2)$, such that
   \[ \dashint_{B(0,1)} |f|^2 \geq 2^{2(1-\delta)} \dashint_{B(0,1/2)} |f|^2, \,\text{ but }\,
     \dashint_{B(0,2)} |f|^2 \leq 2^{2(1-\delta)} \dashint_{B(0,1)} |f|^2. \]
   Such an $f$ would have to be homogeneous of degree $1-\delta$, but there is no such harmonic function on Euclidean space. The other conclusion follows similarly.
 \end{proof}

 \begin{proof}[Proof of Proposition~\ref{prop:gapthm}]
   First note that once $\epsilon$ is sufficiently small, we can apply Theorem~\ref{thm:chart} to arbitrary balls in $M$. In particular for any $R$ we obtain holomorphic functions $z^R_1,\ldots, z^R_n$ which provide a $\Psi(\epsilon)$-Gromov-Hausdorff approximation from $B(p, 2R)$ to $B_{\mathbb{C}^n}(0,2R)$. We can assume $z^R_i(p)=0$.  Once $\epsilon$ is sufficiently small, we have
   \[ \dashint_{B(p, 2R)} |z^R_i|^2 \leq 2^3 \dashint_{B(p,R)} |z^R_i|^2, \]
   and so iterating Lemma~\ref{lem:3annulus} we have
   \[ \dashint_{B(p,2^k)} |z^R_i|^2 \leq C2^{3k} \dashint_{B(p,1/2)} |z^R_i|^2, \]
   whenever $2^k < R$. Let us define $u^R_i$ in the span of the $z^R_i$, so that
   \[ \dashint_{B(p,1)} u^R_i \bar u^R_j = \delta_{ij}. \]
   The estimate above implies that we can extract limit holomorphic functions $u_1,\ldots, u_n$ on $M$ as $R\to\infty$, that are $L^2$-orthonormal on $B(p,1)$, and
   \[ \dashint_{B(p, R)} |u_i|^2 \leq C R^{3/2} \]
   for all $R > 1$.
We will show that $(u_1,\ldots, u_n)$ provides a biholomorphism from $M$ to $\mathbb{C}^n$ if $\epsilon$ is sufficiently small.

We next prove the properness of the map given by the $u_i$. For $R > 1$ let us choose a new basis $v^R_i$ for the span of the $u_i$, so that
\[ \dashint_{B(p,R)} v^R_i \overline{v^R_j} = c^R_i \delta_{ij}, \text{ and } \dashint_{B(p,1)} v^R_i \overline{v^R_j} = \delta_{ij}, \]
for some constants $c^R_i$. We then have $\sum |v^R_i|^2 = \sum |u_i|^2$. Let $\lambda^R_i = \sup_{B(p, R)} |v^R_i|$, and define $w^R_i = v^R_i / \lambda^R_i$. Then just as in \cite[Claim 4.2]{L2}, an argument by contradiction shows that if $\epsilon$ is sufficiently small, then the $Rw^R_i$ give an $\frac{R}{100n}$-Gromov-Hausdorff approximation from $B(p,R)$ to the Euclidean ball $B_{\mathbb{C}^n}(0,R)$. In particular $\sum |w^R_i|^2 > 1/2$ on $\partial B(p,R)$. At the same time, using Lemma~\ref{lem:3annulus} with $\delta=1/2$, and the fact that $v^R_i(p)=0$ (so that $v^R_i$ has at least linear growth on small scales), we have
\[ \dashint_{B(p, R)} |v^R_i|^2 \geq C^{-1} R \dashint_{B(p,1)} |v^R_i|^2 = C^{-1}R. \]
It follows that $\lambda^R_i \geq C^{-1/2}R^{1/2}$. Therefore on $\partial B(p,R)$ we have
\[ \sum |u_i|^2 = \sum |v_i^R|^2 \geq C^{-1/2}R^{1/2} \sum |w_i^R|^2 \geq \frac{1}{2} C^{-1/2}R^{1/2}. \]
This implies that the map given by $(u_1,\ldots, u_n) : M \to \mathbb{C}^n$ is proper.

We assert that the holomorphic $n$-form $du_1\wedge \ldots \wedge du_n$ cannot vanish at any point. Note that on each ball $B(p,R)$ the functions $u_i$ are obtained as a limit of $u_i^R$ which satisfy that $du_1^R\wedge\ldots \wedge du_n^R$ is nowhere vanishing. Therefore if $du_1\wedge \ldots \wedge du_n$ were to vanish at a point, then it would have to be identically zero. In this case the image of $(u_1,\ldots, u_n)$ would have dimension at most $n-1$ in $\mathbb{C}^n$, so the preimage of a point would be a compact subvariety in $M$ of dimension at least $1$. This contradicts that by Theorem~\ref{thm:chart} we can find holomorphic charts on arbitrarily large balls in $M$. Since $\mathbb{C}^n$ is simply connected, it follows that $(u_1,\ldots, u_n)$ must be a biholomorphism.
\end{proof}

\section{Constructing projective embeddings}\label{sec:partialc0}
Suppose that
$(M_i^n, \omega_i, L_i)$ is a sequence of compact polarized K\"ahler manifolds with $\mathrm{Ric}(M_i) > -1, \mathrm{diam}(M_i) < d, \mathrm{vol}(M_i) > v$ and such that the curvature of $L_i$ is the K\"ahler metric $\omega_i$. Let us assume that the sequence $M_i$ converges in the Gromov-Hausdorff sense to the limit $X$. Our goal is to show that $X$ is homeomorphic to a normal projective variety, following Donaldson-Sun~\cite{DS1}.
As in \cite{DS1}, the main step is the construction of holomorphic sections of suitable powers of $L_i$, uniformly in $i$. We can follow the argument in \cite{DS1} fairly closely, the main new difficulty being that in our setting we do not have smooth convergence of the metrics on the regular set. To overcome this we will use the existence of good holomorphic charts on the regular set (or rather the set $\mathcal{R}_\epsilon$ for small $\epsilon$) provided by Theorem~\ref{thm:chart}. The argument is simplified by using the recent estimate of Cheeger-Jiang-Naber~\cite{CJN} on the codimension 2 Minkowski content of the singular set, but see the Appendix for a proof which avoids this.

We prove the following.
\begin{prop}\label{prop:DS1}
  Given $\nu, \zeta > 0$ there are $K, \epsilon, C > 0$ with the following property. Let $(M^n,L,\omega)$ be a polarized K\"ahler manifold such that $\mathrm{Ric}(\omega) > -\epsilon\omega$, and $\mathrm{vol}(B(q,1)) > \nu$ for all $q\in M$. Suppose that $d_{GH}(B(p,\epsilon^{-1}), B(o, \epsilon^{-1})) < \epsilon$ for a metric cone $(V,o)$. Then $L^m$ admits a holomorphic section $s$ over $M$ for some $m < K$, such that $\Vert s\Vert_{L^2} \leq C$,
and  $| |s(x)| - e^{-md(x,p)^2/2}| < \zeta$ for $x\in B(p, 1)$.
\end{prop}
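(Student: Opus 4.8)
The plan is to follow the Donaldson--Sun construction of a ``peak section'' near the point $p$, with the crucial modification that where \cite{DS1} uses smooth convergence of the metrics on the regular set, we instead invoke the holomorphic charts and bounded K\"ahler potentials on almost-Euclidean balls provided by Theorem~\ref{thm:chart}. First I would set up the model: on the metric cone $(V,o)$, on its regular part there is a natural local holomorphic structure and a section of the trivial (or appropriate) bundle with Gaussian profile $e^{-m d(\cdot,o)^2/2}$, coming from the K\"ahler potential $\frac{1}{2}r^2$ of the cone metric on the smooth locus. Since $B(p,\epsilon^{-1})$ is $\epsilon$-close to $B(o,\epsilon^{-1})$, I would fix a large radius $\epsilon^{-1/2}$, cover the relevant region of $M$ by balls that are almost Euclidean (using volume comparison and the cone structure to guarantee this on most of the ball, away from an arbitrarily small neighborhood of the singular set), and on each apply Theorem~\ref{thm:chart} to get holomorphic charts together with local K\"ahler potentials $\phi$ with $|\phi - r^2|$ small.

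The second step is to build an approximately holomorphic global section. Using the local potentials, I would define a local trivialization of $L^m$ in which the Hermitian metric is $e^{-m\phi}$, and take the local model section to be the constant $1$; multiplying by a cutoff function supported near $p$ (of scale $O(1)$, since $m$ is bounded by $K$) produces a global smooth section $\tilde s$ of $L^m$ whose pointwise norm is $e^{-m\phi/2}$ near $p$, hence within $\Psi(\epsilon)$ of $e^{-m d(\cdot,p)^2/2}$, and which decays to $0$ away from $p$. The key estimates are: $\|\bar\partial \tilde s\|_{L^2}$ is small (the cutoff is placed where $e^{-md^2/2}$ is already tiny, and where the chart is genuinely holomorphic $\bar\partial$ of the model section vanishes), and the $L^2$ norm of $\tilde s$ is bounded above by a constant $C$ (integrate the Gaussian against the almost-Euclidean volume; the contribution from the small singular neighborhood is negligible because the integrand is bounded and the Minkowski content / volume of that neighborhood is small by \cite{CJN} or by volume comparison). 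One subtlety: the curvature assumption $\mathrm{Ric}(\omega) > -\epsilon\omega$ gives a Bochner/Kodaira term $\mathrm{Ric} + m\omega > (m-\epsilon)\omega > 0$, so the $\bar\partial$-equation is solvable with good constants.

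The third step is to correct $\tilde s$ to a genuine holomorphic section. Apply H\"ormander's $L^2$-estimate for $\bar\partial$ on $(M,\omega)$ with weight coming from the metric on $L^m$ (using $m$ bounded and $\mathrm{Ric}(\omega)+m\omega \geq (m - \epsilon)\omega$ to get a uniform lower bound on the Bochner curvature term): there is $u$ with $\bar\partial u = \bar\partial \tilde s$ and $\|u\|_{L^2} \leq C\|\bar\partial\tilde s\|_{L^2} = \Psi(\epsilon)$, small. Set $s = \tilde s - u$; then $s$ is holomorphic, $\|s\|_{L^2} \leq C$, and I need $\||s(x)| - e^{-m d(x,p)^2/2}| < \zeta$ pointwise. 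The pointwise control on $s$ versus $\tilde s$ follows from the pointwise control on $u$: since $s - \tilde s = -u$ is holomorphic in a neighborhood of any point and has small $L^2$ norm, the sub-mean-value inequality for $|u|^2$ (holomorphic, so plurisubharmonic, and we have local K\"ahler potentials to control the geometry on definite-size balls via Theorem~\ref{thm:chart}) gives $|u(x)| < \zeta/2$ everywhere; combined with $||\tilde s(x)| - e^{-m d(x,p)^2/2}| < \zeta/2$ this finishes the estimate.

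\textbf{Main obstacle.} I expect the hardest part to be making the pointwise Gaussian profile estimate uniform and honest near and across the singular set, where we have no smooth metric convergence: one must show both that $\tilde s$ can be defined consistently (patching local models on overlapping almost-Euclidean charts, where the transition involves $i\partial\bar\partial$ of a pluriharmonic function, so the gluing error in the potential is controlled) and that the $L^2\to L^\infty$ passage at the end does not degrade near singular points. This is precisely where Theorem~\ref{thm:chart}, giving charts \emph{and} bounded potentials $|\phi - r^2| < \Psi(\epsilon)$ on unit balls, is doing the essential work that smooth convergence does in \cite{DS1}; the role of \cite{CJN} is only to bound the measure of the region where no such chart exists, so that it contributes negligibly to all $L^2$ integrals.
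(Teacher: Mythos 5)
Your overall architecture (local Gaussian models in the charts of Theorem~\ref{thm:chart}, cutoff, H\"ormander correction, $L^2\to L^\infty$ via sub-mean-value) matches the paper's, but there is a genuine gap at the gluing step, and it is precisely the step that forces the power $m$ to appear in the statement. On each chart $U^j$ you take the section with norm $e^{-m\phi_j/2}$; after using the pluriharmonicity of $\phi_j - r^2$ to normalize the weight to $e^{-mr^2}$, the remaining freedom in the local section is a unimodular holomorphic function, i.e.\ a phase. Controlling ``the gluing error in the potential'' only controls the \emph{norms} on overlaps, not these phases. The regular part $U = B(o,R)\setminus\Sigma_\rho$ is in general not simply connected, so the flat Hermitian line bundle $(L, e^{r^2}h)$ can have nontrivial holonomy: parallel unit sections $s_j'$ on overlapping charts differ by constants $e^{\sqrt{-1}\theta_{jk}}$ that need not be close to $1$, and then no choice of cutoff makes $\bar\partial\tilde s$ small while keeping $|\tilde s|\approx e^{-mr^2/2}$. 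The paper resolves this by replacing $L$ with $L^m$ for a sufficiently divisible $m < K$ chosen (by simultaneous Diophantine approximation) so that all $m\theta_{jk}$ are close to multiples of $2\pi$; your proposal never explains where $m$ comes from, and with $m$ fixed in advance the construction fails.

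A secondary issue: the cutoff near the singular set $\Sigma$ passes through the region where the Gaussian is of order $1$, so it is not enough that the \emph{volume} of a neighborhood of $\Sigma$ is small — you need $\int|\nabla\psi^3|^2 e^{-r^2}$ small, i.e.\ a capacity-type estimate. This is exactly where the paper uses the Cheeger--Jiang--Naber bound $\mathrm{vol}(\Sigma_r\cap B(p,R))\leq Cr^2$ together with the logarithmic cutoff of Li--Tian (or, in the Appendix, the alternative argument via the integral of scalar curvature). Your remark that \cite{CJN} only bounds ``the measure of the region where no chart exists'' undersells what is needed. Finally, note the paper runs the whole argument by contradiction along a sequence $(M_i,p_i)\to(V,o)$, which is what lets it take limits of charts and of the line bundles to set up the flat limit bundle on $U$; a direct quantitative version of your argument would have to reproduce all of this uniformly.
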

Note that if $(M^n,  \omega)$ satisfies $\mathrm{Ric} > -1, \mathrm{diam}(M) < d, \mathrm{vol}(M) > v$, then by volume comparison, there is a $\nu > 0$, depending on $d, v$ such that $\mathrm{vol}(B(p,r)) > \nu r^{2n}$ for all $r \leq 1$. Thus scaled up copies of $M$ will satisfy the local non-collapsing assumption in the statement of the proposition.

\begin{proof}
  We will argue by contradiction, so suppose that the sequence $(M_i^n, p_i, L_i, \omega_i)$ satisfying the assumptions converges in the pointed Gromov-Hausdorff sense to $(V, o)$. We will show that there is an $m > 0$ such that for sufficiently large $i$,  $L_i^m$ admits a suitable section for the points $p_i$.

From Theorem~\ref{thm:chart}, volume comparison, and Cheeger-Colding theory, we have an $\epsilon = \epsilon(n) > 0$ such that if $q\in V$ satisfies $r^{-2n}\mathrm{vol}(B(q,r)) > \omega_{2n} - \epsilon$ for some $r < \epsilon$, then we have a holomorphic chart on $B(q,\epsilon r)$. In terms of this we define the singular set $\Sigma$ of $V$ as points $q$ which satisfy $\lim\limits_{r\to 0}r^{-2n} \mathrm{vol}(B(q, r))\leq\omega_{2n}-\epsilon$. Note that $\Sigma$ is a closed set.
Let $\Sigma_\rho$ be the $\rho$-neighborhood of $\Sigma$ and set $U = B(o, R)\backslash \Sigma_\rho$. Here $R$ is sufficiently large and $\rho$ if sufficiently small, to be chosen later, depending on the parameters $n, \nu$.

We can cover $U$ by a finite number of small geodesic balls $U'^j$ ($j = 1, .., N$, $N$ depends on $R, \rho$) with center $q_j$, such that  the balls $U^j=\epsilon U'^j$ with the same centers, but radius scaled by $\epsilon$, still cover $U$. The radii of the $U'^j$ can be chosen to be a fixed small constant, depending on $R,\rho$.

Let $U_i$ be the lift of $U$ back to $M_i$, under the Gromov-Hausdorff approximation. By Theorem~\ref{thm:chart} and our choice of $\epsilon$, for sufficiently large $i$, we have uniform holomorphic charts $(U^j_i, (z^j_{i1}, ..., z^j_{in}))$ covering $U_i$. Moreover, by Proposition~\ref{prop:limitchart} the holomorphic charts on $U^j_i$ converge to charts on $U^j$, with holomorphic transition functions. Thus $U$ admits a holomorphic structure.

From Theorem~\ref{thm:chart}, on each $U^j_i$, there is a function $\rho_{ij}$ so that $\sqrt{-1}\partial\overline\partial\rho_{ij} = \omega_i$ and $\rho_{ij}$ is very close to the distance squared from the center of $U^j_i$.
In particular these $\rho_{ij}$ are uniformly bounded, independent of $i$. Observe $\Delta \rho_{ij} = 2n$, so Yau's estimate gives us uniform gradient bound on a smaller interior domain. Therefore, by passing to a subsequence, we may assume $\rho_{ij}\to \rho_j$ uniformly.

\begin{claim} \label{claim:pluriharmonic}
On each $U^j$, $\rho_j-r^2$ is a pluriharmonic function with respect to the holomorphic structure constructed above, where $r$ is the distance function to the vertex $o \in V$.
\end{claim}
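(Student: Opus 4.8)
The plan is to prove the stronger statement that $\sqrt{-1}\partial\bar\partial\rho_j=\sqrt{-1}\partial\bar\partial(r^2)$ as currents on each $U^j$; since $\rho_j-r^2$ is continuous (and locally bounded), a current identity $\sqrt{-1}\partial\bar\partial(\rho_j-r^2)=0$ forces $\rho_j-r^2$ to be harmonic, hence smooth, and then pluriharmonic. First I would record two preliminary facts. (i) The local currents $\omega_\infty:=\sqrt{-1}\partial\bar\partial\rho_j$ patch to a single closed positive $(1,1)$-current on $U$: indeed $\sqrt{-1}\partial\bar\partial\rho_{ij}=\omega_i$ for \emph{every} $j$, so on overlaps $\rho_{ij}-\rho_{ik}$ is pluriharmonic, and passing to the limit $\rho_j-\rho_k$ is pluriharmonic on $U^j\cap U^k$. (ii) Since $\rho_{ij}\to\rho_j$ uniformly in the converging holomorphic charts, the K\"ahler forms converge weakly as currents, $\omega_i\to\omega_\infty$ on each $U^j$.

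The core point is then that $\sqrt{-1}\partial\bar\partial(r^2)=\omega_\infty$ on $U^j$, which I would establish through the approximating sequence. Set $b_i:=d_i(p_i,\cdot)$, where $p_i\to o$. Because distance functions are stable under Gromov-Hausdorff convergence and the charts $z^j_i$ converge to $z^j$, the functions $b_i^2$ converge uniformly, in the fixed coordinate patch, to $r^2$; hence $\sqrt{-1}\partial\bar\partial(b_i^2)\to\sqrt{-1}\partial\bar\partial(r^2)$ weakly on $U^j$. On the other hand, since $V$ is a metric cone the volume-annulus ratios near $o$ are rigid, so the Cheeger-Colding estimates for almost-cones give $\mathrm{Hess}_{g_i}(b_i^2/2)-g_i\to 0$ in $L^2_{\mathrm{loc}}$ over the regular region, and $U$ lies in this region by construction, since every point of $U$ has volume density $>\omega_{2n}-\epsilon$. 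As each $M_i$ is K\"ahler, this Riemannian identity translates into a complex one: $\sqrt{-1}\partial\bar\partial(b_i^2)$ is the $(1,1)$-form attached to (a multiple of) the $J_i$-invariant part of $\mathrm{Hess}_{g_i}(b_i^2/2)$, while $\omega_i$ is the form attached to $g_i$ itself, so $\sqrt{-1}\partial\bar\partial(b_i^2)-\omega_i\to 0$ weakly on $U^j$ — in the normalization of the paper, under which the cone identity $\mathrm{Hess}(r^2/2)=g$ reads $\sqrt{-1}\partial\bar\partial(r^2)=\omega$. Combining the three limits gives $\sqrt{-1}\partial\bar\partial(r^2)=\omega_\infty=\sqrt{-1}\partial\bar\partial\rho_j$, and the claim follows from the reformulation above.

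The main obstacle is exactly the one flagged in the paper: we do not have smooth convergence of the metrics on the regular set, so one cannot simply compute $\mathrm{Hess}(r^2/2)=g$ pointwise for a smooth limit K\"ahler metric and read off the $(1,1)$-part. Everything therefore has to be phrased with currents and one must lean on the Cheeger-Colding integral Hessian estimate, being careful that it genuinely applies on the region $U$ — i.e. away from $\Sigma$, where the density is almost Euclidean — and that the cut locus of $p_i$ inside $U_i$ contributes negligibly (its Hessian defect is a measure of a favorable sign, controlled by the Laplacian bound). A secondary technical point is checking that the correspondence between the $J_i$-invariant Riemannian Hessian and $\sqrt{-1}\partial\bar\partial$ passes cleanly to the weak limit; this is where one uses that $b_i^2$ is Lipschitz and that the correspondence is a distributional identity on each smooth $M_i$, so that $L^2_{\mathrm{loc}}$ control of the tensor gives weak control of the current.
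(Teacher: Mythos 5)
Your proposal is essentially the paper's argument: both reduce the claim to the current identity $\sqrt{-1}\partial\bar\partial(\rho_j-r^2)=0$ tested against compactly supported $(n-1,n-1)$-forms, use $\sqrt{-1}\partial\bar\partial\rho_{ij}=\omega_i$ together with the Cheeger--Colding $L^2$ Hessian estimate to compare $\omega_i$ with $\sqrt{-1}\partial\bar\partial$ of functions converging to $r^2$, and pass to the weak limit using the uniform gradient bounds of the charts. The one divergence is that the paper applies the Hessian estimate to the Cheeger--Colding approximating functions $h_i\to r^2$ (for which $\int|\omega_i-\sqrt{-1}\partial\bar\partial h_i|^2\to 0$ is the literal statement of \cite{CC}) rather than to $d(p_i,\cdot)^2$ itself, which cleanly sidesteps the cut-locus issue you flag and would otherwise have to resolve.
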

\begin{proof}
First, $\rho_j-r^2$ is a bounded Lipschitz function. To prove the result, we show that if $\alpha$ is any smooth $(n-1,n-1)$ form with respect to the holomorphic structure, with compact support in $U^j$, then
\[ \int_{U^j}(\rho_j-r^2)\partial\overline\partial \alpha = \int_{U^j} \partial\bar\partial (\rho_j - r^2) \wedge \alpha = 0. \]
According to Cheeger-Colding \cite{CC}, there is a sequence of functions $h_i$ on the $U_i^j$ converging to $r^2$ as $i\to\infty$, and such that
\[ \lim_{i\to\infty} \int_{U_i^j} |\omega_i - \sqrt{-1}\partial\bar\partial h_i|_{\omega_i}^2\, \omega_i^n = 0. \]
The claim then follows from the fact that under our charts
\[\lim\limits_{i\to\infty}\left|\int_{U^j_i}\partial\overline\partial (\rho_{ij}-h_i)\wedge \alpha\right|\leq \lim\limits_{i\to\infty}\int_{U^j_i}|\partial\overline\partial (\rho_{ij}-h_i)|_{\omega_i}|\alpha|_{\omega_i}\omega_i^n = 0.\]
Here we used that $|\alpha|_{\omega_i}$ is uniformly bounded since our holomorphic charts have uniformly bounded gradients, and so the $\omega_i$ have uniform lower bounds in terms of the charts.
\end{proof}

Since $U^j_i$ is contained in a holomorphic chart, the line bundle $L_i$ over $U^j_i$ is isometric to a trivial holomorphic line bundle with weight $e^{-\rho_{ij}}$.
Let $s_{ij}$ be a holomorphic section over $U^j_i$ so that $|s_{ij}| = e^{-\rho_{ij}}\neq 0$. Note that if $U_i^j\cap U^k_i\neq \emptyset$, the (holomorphic) transition functions $f_{ijk}=\frac{s_{ik}}{s_{ij}}$ are uniformly bounded. Therefore, after taking a subsequence, we may assume that the line bundles $L_i$ converge to a Hermitian holomorphic line bundle $(L,h)$ over $U$.

The line bundle $L$ is trivial over each $U_j$ with weight $e^{-\rho_j}$.
By Claim~\ref{claim:pluriharmonic} the metric $e^{r^2}h$ on $L$ is flat over $U$, but since $U$ is not necessarily simply connected, $(L,e^{r^2}h)$ need not be a trivial holomorphic line bundle with the flat metric. To deal with the possible presence of holonomy we follow Donaldson-Sun's argument. For the reader's convenience, we include some details. Pick a point $q\in U\cap \partial B_V(o, 1)$. As $U$ is connected, we can join $q$ with $q_j$ (recall $q_j$ is the center of geodesic balls $U_j$) by smooth curves $l_j\subset U$ (in terms of holomorphic structure of $U$). Now let $s'$ be a vector in the fibre of $L$ over $q$ so that $|s'| = 1$. We can parallel transport $s'$ along $l_j$, to get vectors $s'_j$ in the fibre of $L$ over $q_j$. Let us parallel transport $s'_j$ in the geodesic ball $U_j$. Since $L$ is flat and $U_j$ is contained in a holomorphic chart, $s'_j$ is well-defined.

\begin{claim}Given any $\delta>0$,
if we replace $L$ by $L^m$, where $m$ is some number that is bounded by a constant $K(\delta, n ,\nu, R, \rho)$, then in the overlap $U_j\cap U_k$ we can ensure that $|s'_j-s'_k|_{e^{r^2}h}<\delta$.
\end{claim}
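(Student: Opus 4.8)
The plan is to reduce the claim to a simultaneous Kronecker-type approximation statement for finitely many holonomy angles, following Donaldson-Sun~\cite{DS1}. By Claim~\ref{claim:pluriharmonic} the Chern connection of $(L|_U,e^{r^2}h)$ is flat, and since it is the Chern connection of a Hermitian holomorphic bundle it is also compatible with the metric; hence parallel transport along any path is a $\mathrm{U}(1)$-isometry of fibres, and every parallel local section is holomorphic. So I would first record that, over each $U_j$ (which is simply connected and contained in a holomorphic chart), the section $s'_j$ is a nowhere vanishing holomorphic section of $L$ whose $e^{r^2}h$-norm is a constant — the same constant for every $j$, since all the $s'_j$ arise by parallel transport from the single vector $s'$. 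Normalizing $s'$, we may assume this constant equals $1$.

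Next I would pin down the transition data. After refining the cover we may assume every nonempty overlap $U_j\cap U_k$ is connected; on such an overlap $s'_j/s'_k$ is a holomorphic function of constant modulus, hence a constant $e^{\sqrt{-1}\theta_{jk}}$ with $\theta_{jk}\in\mathbb{R}/2\pi\mathbb{Z}$, and geometrically $\theta_{jk}$ is the holonomy of the flat connection around the loop based at $q$ formed by $l_j$, then a path from $q_j$ to $q_k$ inside $U_j\cap U_k$, then $l_k^{-1}$. Since the cover is finite there are only finitely many such angles, say $\theta_1,\dots,\theta_P$. Replacing $L$ by $L^m$ replaces each $s'_j$ by its $m$-th tensor power and each overlap constant by $e^{\sqrt{-1}m\theta_a}$, while the unit-norm normalization carries over to the induced flat metric on $L^m$; therefore on every overlap $|s'_j-s'_k|=|e^{\sqrt{-1}m\theta_a}-1|$. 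It thus suffices to produce arbitrarily large $m$ — and, given any prescribed $q_0$ (so that $m$ is as ``divisible'' as later needed), arbitrarily large $m\in q_0\mathbb{Z}_{>0}$ — with $|e^{\sqrt{-1}m\theta_a}-1|<\delta$ for every $a$ simultaneously.

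This last point is the crux of the argument. Write $\theta=(\theta_1,\dots,\theta_P)\in\mathbb{T}:=(\mathbb{R}/2\pi\mathbb{Z})^P$ and fix $q_0\in\mathbb{Z}_{>0}$. The closure $H$ of the sub-semigroup $\{m\,q_0\theta:m\geq 1\}$ is a closed, hence compact, subgroup of $\mathbb{T}$: indeed, in a compact abelian group the closure of any sub-semigroup is a group, since for $x$ in such a closure a convergent subsequence of $(mx)_{m\geq1}$ yields, on taking differences, both $0$ and $-x$ in the closure. Being a group, $H$ contains $0$, so every neighbourhood $W$ of $0$ in $\mathbb{T}$ meets $\{m\,q_0\theta:m\geq1\}$; a short argument separating the cases in which $q_0\theta$ has finite or infinite order in $\mathbb{T}$ upgrades this to the existence of arbitrarily large such $m$. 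Choosing $W$ small enough that $|e^{\sqrt{-1}\psi_a}-1|<\delta$ for all $a$ whenever $\psi=(\psi_a)\in W$ then completes the proof.

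The genuinely substantive step is precisely this Diophantine one, which is what forces a \emph{single} exponent $m$ to work for all overlaps at once; by contrast the remaining ingredients — connectedness of the refined overlaps, constancy of a modulus-constant holomorphic function on a connected set, and the subgroup-closure fact — are routine. A minor point to take care of is that the normalization $|s'|_{e^{r^2}h}=1$ must be transported consistently to $L^m$, so that the comparison is genuinely $|e^{\sqrt{-1}m\theta_a}-1|$ with no parasitic $m$-dependent factor.
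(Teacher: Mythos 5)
Your proposal is correct and follows essentially the same route as the paper: identify the overlap discrepancies as finitely many constant phases $e^{\sqrt{-1}\theta_{jk}}$ (using that the $s'_j$ have unit $e^{r^2}h$-norm and the overlaps are connected), observe that passing to $L^m$ multiplies each angle by $m$, and then find a single large (and suitably divisible) $m$ making all $m\theta_{jk}$ simultaneously close to multiples of $2\pi$. The only difference is that you spell out the simultaneous approximation step (the paper dismisses it as ``elementary number theory'') via the standard fact that the closure of a sub-semigroup of the torus is a subgroup containing $0$, which is a fine way to supply that detail.
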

\begin{proof}
Notice that the norms under $e^{r^2}h$ of the $s'_j$ are all equal to one. So $s'_j = s'_k e^{\sqrt{-1}\theta_{jk}}$. As $U_j\cap U_k$ is connected, $\theta_{jk}$ is constant.
$(s'_j)^m = (s'_k)^m e^{\sqrt{-1}m\theta_{jk}}$. To prove the claim, we just need to find $m$ so that $m\theta_{jk}$ is close to $2\pi$ times an integer for all $j, k\leq N$.
It follows from elementary number theory that such $m$ exists. The bound follows from the fact that $N$ depends only on $n, \nu, R, \rho$.\end{proof}

Let us fix a very small $\delta = \delta(n, \nu)$, to be determined later.
From now on, we replace $L$ by $L^m$, where $m < K(\delta, n,\nu, R, \rho)$. Let us still call the new line bundle $L$. Let us also rescale the metric $(M_i, p_i, \omega_i, L_i)$ by $(M_i, p_i, m\omega_i, L_i^m)$. Since $m$ is a fixed number, the new sequence which we still call $(M_i, p_i, \omega_i)$ will converge to $(V, o)$.  We can consider the same $U\subset V$.  A priori, the charts $U^j_i$ might be different, but we shall make the centers $q_j$ be the same. Note that $|s_j'-s_k'|_{e^{r^2}h}$ reflects the holonomy and homotopy preserves the holonomy since $L$ is flat. Thus the claim implies that $|s_j'-s_k'|_{e^{r^2}h}<\delta$. Note that $s_j'$ is a holomorphic section of $L$ over $U^j$. Under the convergence $L_i\to L$, we can find holomorphic sections $s^i_j$ on $L_i$ over $U^j_i$ so that $s^i_j\to s'_j$. Then, for sufficiently large $i$  we have $|s^i_j-s^i_k|\leq 2\delta e^{-\frac{1}{2}r^2}\leq 2\delta$. Moreover, $|s^i_j|\leq 10e^{-\frac{1}{2}r^2}$ for large $i$.

By a standard partition of unity, we can glue the sections $s^i_j$ together to a smooth section $\hat{s}_i$ of $L_i$ over $U_i$ so that
$|\overline\partial\hat{s}_i|^2<\gamma$, $|\hat{s}_i-s^i_j|<\min(10\delta, 20e^{-\frac{1}{2}r^2})$. Here $\gamma$ is a small number depending only on $n, \nu, \delta, R, \rho$, and we are using the smooth structure given by our holomorphic charts.

Similarly to \cite{DS1}, we introduce the first standard cut-off function $\psi^1_i$, supported in $B(p_i, R)$, and the second cut-off function $\psi^2_i$, supported outside $B(p_i, \rho)$.

To define the third cut-off function $\psi^3_i$, first recall the cut-off function in \cite{LT}, page $871$.
More precisely, if $\epsilon'\ll\epsilon\ll 1$ (constants independent of $i$) we define a cut-off function $\psi(t) = 1$, if $t\geq \epsilon$; $\psi(t) =(\frac{t}{\epsilon})^\epsilon$ if $2\epsilon'\leq t\leq \epsilon$; $\psi(t) =(2\frac{\epsilon'}{\epsilon})^\epsilon(\frac{t}{\epsilon'}-1)$, if $\epsilon'\leq t\leq 2\epsilon'$; $\psi(t) =0$ otherwise.

Let $\Sigma^i\subset M_i$ so that $\Sigma^i$ converges to the singular set $\Sigma$ under the Gromov-Hausdorff approximation. Let $\Sigma^i_r$ be the $r$-tubular neighborhood of $\Sigma^i$.
For $x\in B(p_i, R)$, let $d_i(x) = dist(x, \Sigma^i)$.
According to Cheeger-Jiang-Naber's theorem \cite{CJN} and the volume convergence theorem \cite{C}, for $\epsilon'<r<2\epsilon$, if $i$ is sufficiently large, $\mathrm{vol}(\Sigma^i_{r}\cap B(p_i, R))\leq C(n, \nu, R)r^2$.
Let the third cut-off function be $\psi^3_i(x) =\psi(d_i(x))$. Note $|\psi'|$ is decreasing from $2\epsilon'$ to $\epsilon$ and $10|\psi'(2t)|\geq |\psi'(t)|$ for any $2\epsilon'\leq t\leq \frac{1}{2}\epsilon$.
By the calculation in \cite{LT}, for $i$ large enough, we can make $\int_{B(p_i, R)}|\nabla\psi^3_i|^2e^{-r^2}$ as small as we want, provided $\epsilon'$ and $\epsilon$ are small enough.

Recall that $U$ is the complement of $\Sigma_\rho$ in  $B(o, R)$. Let us assume $\rho<\frac{1}{10}\epsilon'$. As in \cite{DS1}, the smooth section $\tilde{s_i}= \psi^1_i\psi^2_i\psi^3_i\hat{s}_i$ satisfies
\begin{itemize}
\item
 $\tilde{s}_i$ supported in $B(p_i, R)\backslash (B(p_i, \rho)\cup \Sigma^i_{\epsilon'})$.
\item $\int |\overline\partial \tilde{s}_i|^2<\gamma_2$ (can be as small as we want, if we set the parameters $R, \rho, \epsilon, \epsilon'$ properly).
\item  $|\nabla \tilde{s}_i|\leq C(n, \nu)$ on $U_i\backslash \Sigma^i_{10\epsilon}$
\item $|\tilde{s}_i-s^i_j|\leq C(n, \nu)\delta$ on a slightly smaller subdomain of $U_i$.
\end{itemize}
 By H\"ormander's $L^2$ estimate, we can find a holomorphic section $s_i$ on $M_i$ so that if we set $s''_i = s_i-\tilde{s}_i$, then $\int_{M_i}|s''_i|^2\leq 10\gamma_2$. As $s_i$ has uniform $L^2$ bound, $|\nabla s_i|$ is uniformly bounded. Thus, on $U_i\backslash \Sigma^i_{10\epsilon}$, $\nabla s''_i$ is uniformly bounded. Therefore, by the integral estimate of $s''_i$, $s''_i$ is very small in $V_i=U_i\backslash \Sigma^i_{\epsilon_0}$. Here $\epsilon_0$ is a small number depending only on $n, \nu, \gamma_2$. This means that on $V_i$, $s_i$ is close to $\tilde{s}_i$, hence $|s_i|^2$ is close to $e^{-r_i^2}$ on $V_i$ (here $r_i$ is the distance to $p_i$). But as a set, $V_i$ is Hausdorff close to $B(p_i, R)$. Then by the gradient estimate of $s_i$, we find that $|s_i|^2$ is very close to $e^{-r_i^2}$ on $B(p_i, R)$.

  Since $\tilde{s}_i$ vanishes outside of $B(p_i, R)$, $s''_i$ is holomorphic on $M_i\backslash B(p_i, R)$. As $\int |s''_i|^2\leq 10\gamma_2$, we can make sure that $|s''_i|^2<c(n, \nu, d)\gamma_2$ on $M_i\backslash B(p_i, 2R)$. Now we can choose appropriate parameters so that Proposition \ref{prop:DS1} holds.
\end{proof}

Now let us take a look at the special case when the cone $V$ in Proposition~\ref{prop:DS1} splits off $\mathbb{R}^{2n-2}$. Note that when $|\mathrm{Ric}|$ is bounded, we actually have $V = \mathbb{R}^{2n}$ in this case, by Cheeger-Colding-Tian \cite{CCT}. In general
$(V, o)$ is isometric $\mathbb{R}^{2n-2}\times W$, where $(W, o')$ is a two dimensional metric cone. Let us write the metric on $W$ as $dr^2+r^2d\theta^2$, where $0\leq \theta\leq \alpha$ and $\alpha$ is the cone angle of $W$. By \cite{CCT}, the factor $\mathbb{R}^{2n-2}$ has a natural linear complex structure. The conical metric on $W$ also determines a natural complex structure. Thus $(V, o)$ can be identified with $\mathbb{C}^n$. Let the standard holomorphic coordinates be given by $(z_1, .., z_{n-1}, z_n)$, where $z_1, .., z_{n-1}$ are the standard linear coordinates on the first factor $\mathbb{R}^{2n-2}$ and $z_n(r, \theta) = r^{\frac{2\pi}{\alpha}}e^{\frac{2\pi\sqrt{-1}\theta}{\alpha}}$.

Fix $\zeta$ small, and let $s$ be the holomorphic section of $L^m$  constructed in Proposition~\ref{prop:DS1}. For simplicity of notation let us replace $L$ by $L^m$ (i.e. replace $\omega$ by $m\omega$).
Set $h = -\log |s|^2$, so that $\sqrt{-1}\partial\overline\partial h = \omega$. In addition  $h$ is close to the distance squared from $p$.
In particular, if we define $\Omega'$ as the sublevel set $h <1000$, then if $\zeta$ is chosen small, we have $B(p,10) \subset \Omega'\subset\subset B(p, 100)$. Let $\Omega$ be the connected component of $\Omega'$ containing $B(p, 10)$. Then $\Omega$ is a Stein manifold.

Using the same argument as Lemma $4.6$ of \cite{L1} (the proof there only requires the Ricci curvature lower bound), we have the following.
\begin{lemma} We can find $n$ complex harmonic functions $w'_{1}, ..., w'_{n}$ on $B(p, 100)$ so that $w'_{k}$ is $\Psi(\epsilon|n,\nu)$-close to $z_k$ under the Gromov-Hausdorff approximation. Furthermore, $\int_{B(p, 100)}|\overline\partial w'_k|^2\leq \Psi(\epsilon|n, \nu)$.
\end{lemma}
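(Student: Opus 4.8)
The goal is to produce, on the large ball $B(p,100)$, complex-valued harmonic functions $w'_1,\dots,w'_n$ that approximate the model coordinates $z_1,\dots,z_n$ on the limit cone $V = \mathbb{R}^{2n-2}\times W$, together with a small $L^2$-bound on $\bar\partial w'_k$.

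\medskip

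The plan is to follow the construction of almost-splitting harmonic functions in Cheeger-Colding theory, exactly as in \cite[Lemma 4.6]{L1}, and observe that only the Ricci lower bound is used there. First, for the $n-1$ linear coordinates $z_1,\dots,z_{n-1}$ on the Euclidean factor $\mathbb{R}^{2n-2}$: since $(V,o)$ splits off $\mathbb{R}^{2n-2}$ isometrically and $B(p,\epsilon^{-1})$ is Gromov-Hausdorff close to $B(o,\epsilon^{-1})$, the almost-splitting theorem of Cheeger-Colding produces, on $B(p,100)$, real harmonic functions that are $\Psi(\epsilon|n,\nu)$-close to the coordinate functions of the splitting factor and whose gradients are almost orthonormal and almost parallel in the averaged $L^2$ sense. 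Pairing them up gives $w'_1,\dots,w'_{n-1}$ close to $z_1,\dots,z_{n-1}$; the almost-parallel/almost-orthonormality estimates, combined with the fact that the complex structure on $\mathbb{R}^{2n-2}$ is the linear one compatible with the splitting, force the integral of $|\bar\partial w'_k|^2$ to be small. For the last coordinate $z_n = r^{2\pi/\alpha}e^{2\pi\sqrt{-1}\theta/\alpha}$ on the two-dimensional cone factor $W$: one builds a harmonic function on $B(p,100)$ approximating the pluriharmonic (indeed holomorphic, away from the vertex) function $z_n$ on $W$. This is done by solving $\Delta u = 0$ with boundary data approximating $\mathrm{Re}\, z_n$ and another approximating $\mathrm{Im}\, z_n$, and using the stability of harmonic functions under Gromov-Hausdorff convergence together with the elliptic estimates of Cheeger-Colding to control $\int |\bar\partial w'_n|^2$: since $z_n$ is genuinely holomorphic on the smooth part of $W$ and the metric on the regular set converges in the appropriate weak sense, $\bar\partial$ of the approximating functions is small in $L^2$.

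\medskip

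In more detail, the key steps, in order, are: (i) apply the Cheeger-Colding almost-splitting/almost-metric-cone theorem to get harmonic functions $b_1,\dots,b_{2n-2}$ on $B(p,100)$ with $\int |\langle \nabla b_\alpha, \nabla b_\beta\rangle - \delta_{\alpha\beta}|^2$ and $\int |\mathrm{Hess}\, b_\alpha|^2$ small; (ii) observe that the limit complex structure on the $\mathbb{R}^{2n-2}$ factor pairs these into $n-1$ complex harmonic functions, and that the smallness of the Hessians plus almost-orthonormality gives $\int_{B(p,100)} |\bar\partial w'_k|^2 \leq \Psi(\epsilon|n,\nu)$ for $k\leq n-1$, since $|\bar\partial w'_k|$ is a bilinear expression in $\nabla b_\alpha$, $J$, and its failure to vanish is controlled by the deviation of $\langle \nabla b_\alpha, \nabla b_\beta\rangle$ and $\langle J\nabla b_\alpha, \nabla b_\beta\rangle$ from their model values; (iii) for $z_n$, solve the Dirichlet problem on $B(p,100)$ (or a slightly larger ball) with boundary values close to $\mathrm{Re}\,z_n,\ \mathrm{Im}\,z_n$ pulled back via the Gromov-Hausdorff map, use Cheeger-Colding's convergence of harmonic functions and the fact that $z_n$ is holomorphic on the regular part of $W$ (hence pluriharmonic, and its real/imaginary parts are harmonic) to conclude both the $C^0$-closeness and $\int |\bar\partial w'_n|^2 \leq \Psi(\epsilon|n,\nu)$; (iv) combine.

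\medskip

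The main obstacle is step (iii): the last coordinate $z_n$ is singular at the cone vertex, and the cone $W$ is genuinely singular there (its angle $\alpha$ need not be $2\pi$), so one cannot simply transplant smooth elliptic estimates. The resolution, as in \cite{L1}, is that $z_n$ is bounded and Lipschitz on $B_V(o,100)$, harmonic (real and imaginary parts) on the regular set which has full measure, and the singular set has codimension $\geq 2$ — so the function is a genuine harmonic function on $W$ in the weak sense, and the Cheeger-Colding theory of convergence of harmonic functions under Gromov-Hausdorff convergence (with only a Ricci lower bound) applies to produce approximating harmonic functions on $B(p_i,100)$, with the $\bar\partial$-smallness following from the $L^2$-convergence of the differentials on the regular set where the complex structures also converge. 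The only thing to check carefully is that the weak-$L^2$ convergence of Hessians/gradients passes through the pairing with the (limiting) complex structure $J$, which is exactly the content of the Cheeger-Colding estimates combined with the holomorphic chart convergence of Proposition~\ref{prop:limitchart}; no upper Ricci bound enters at any point.
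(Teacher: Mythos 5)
Your argument is essentially the paper's: the proof given there is a one-line appeal to Lemma 4.6 of \cite{L1} (with the observation that the argument only uses the Ricci lower bound), and your reconstruction --- Cheeger--Colding almost-splitting functions paired by the complex structure for the $\mathbb{R}^{2n-2}$-coordinates, plus harmonic approximation of the Lipschitz, weakly harmonic function $z_n$ on the cone factor, with strong $L^2$ convergence of gradients giving the $\bar\partial$-estimate --- is exactly that argument. The only step worth making explicit is that the smallness of $\int|\bar\partial w'_k|^2$ needs control of $\langle J\nabla b_\alpha,\nabla b_\beta\rangle$, which comes not from the splitting theorem itself but from $\nabla J=0$ together with the $L^2$-smallness of the Hessians, and, near the singular set $\mathbb{R}^{2n-2}\times\{o'\}$, from the explicit codimension-two tubular volume bound combined with the Cheng--Yau gradient estimate; you gesture at both ingredients and both are available here.
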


We can solve the $\overline\partial$ problem on $\Omega$ by using the weight $e^{-h}$. By a similar argument to before, we find holomorphic functions $w_{1}, \ldots, w_{n}$ on $B(p, 10)$ which are $\Psi(\epsilon|n,\nu)$-close to the $z_1,\ldots, z_n$. By using the same argument as on page 18 of \cite{L1}, we find that if $\epsilon$ is sufficiently small, $(w_{1}, .., w_{n})$ gives a holomorphic chart on $B(p, 5)$. We have therefore obtained the following result (see also Proposition 12 in \cite{CDS2}).

\begin{prop}\label{prop:codim2cone}
Let $(M, L, \omega)$ be a polarized K\"ahler manifold satisfying $\mathrm{Ric} > -1$ and $\mathrm{vol}(B(p,1)) > \nu$ for all $p\in M$. There exists $\epsilon = \epsilon(n, \nu)$ so that the following holds. Assume that $d_{GH}(B(p, \epsilon^{-1}), B_V(o, \epsilon^{-1}))<\epsilon$, where $(V, o)$ is a metric cone splitting off $\mathbb{R}^{2n-2}$.  Then there exists a holomorphic chart $(w_{1}, ..., w_{n})$ on $B(p, 5)$ such that $(w_{1}, ...., w_{n})$ is $\Psi(\epsilon|n,\nu)$-close to a standard holomorphic coordinate chart on $(V, o)$.
\end{prop}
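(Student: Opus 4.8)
\textbf{Proof proposal for Proposition~\ref{prop:codim2cone}.}

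The plan is to apply Proposition~\ref{prop:DS1} to the cone $(V,o) = \mathbb{R}^{2n-2}\times W$, obtaining, for any small $\zeta$, a holomorphic section $s$ of $L^m$ (with $m < K$ bounded) satisfying $\|s\|_{L^2}\le C$ and $\big||s(x)| - e^{-m\,d(x,p)^2/2}\big| < \zeta$ on $M$. After replacing $L$ by $L^m$ and $\omega$ by $m\omega$, set $h = -\log|s|^2$, so that $\sqrt{-1}\,\partial\bar\partial h = \omega$ on the locus where $s\neq 0$, and $h$ is uniformly close to $r^2 = d(\cdot,p)^2$. As noted above, the sublevel set $\Omega'=\{h<1000\}$ then satisfies $B(p,10)\subset\Omega'\subset\subset B(p,100)$, and we take $\Omega$ to be the component containing $B(p,10)$; being a sublevel set of a plurisubharmonic exhaustion on a (relatively compact, hence in particular holomorphically convex once cut off) domain, $\Omega$ is Stein, which is what makes the $\bar\partial$-machinery available.

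The core of the argument is to produce the coordinate functions. First I would invoke the cited consequence of Cheeger-Colding (as in Lemma~4.6 of \cite{L1}, which only uses a Ricci lower bound) to obtain complex-valued functions $w'_1,\dots,w'_n$ on $B(p,100)$ that are harmonic, are $\Psi(\epsilon|n,\nu)$-Gromov-Hausdorff-close to the standard holomorphic coordinates $z_1,\dots,z_n$ on $(V,o)$, and satisfy $\int_{B(p,100)}|\bar\partial w'_k|^2 \le \Psi(\epsilon|n,\nu)$. The subtlety here is the $z_n = r^{2\pi/\alpha}e^{2\pi\sqrt{-1}\theta/\alpha}$ factor coming from the two-dimensional cone $W$: one must check that this function is harmonic on $W$ and that its Gromov-Hausdorff pullbacks can be approximated by harmonic functions on $M$ with small $\bar\partial$; this is exactly the content imported from \cite{L1}. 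Then, using the weight $e^{-h}$ on the Stein domain $\Omega$, solve $\bar\partial u_k = \bar\partial w'_k$ with $\int_\Omega |u_k|^2 e^{-h} \le \int_\Omega |\bar\partial w'_k|^2_{\omega} e^{-h} \le \Psi(\epsilon|n,\nu)$ via the Hörmander/Demailly $L^2$-estimate, valid because $\sqrt{-1}\,\partial\bar\partial h = \omega > 0$; set $w_k = w'_k - u_k$, which is holomorphic on the interior, and by interior elliptic estimates (the $w'_k$ and the metric have controlled geometry away from $\partial\Omega$, via the charts of Theorem~\ref{thm:chart} on $\mathcal{R}_\epsilon$) the $w_k$ remain $\Psi(\epsilon|n,\nu)$-close to $z_1,\dots,z_n$ on $B(p,10)$.

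Finally I would run the argument of \cite{L1}, p.~18, to upgrade Gromov-Hausdorff closeness of $(w_1,\dots,w_n)$ to $(z_1,\dots,z_n)$ into the statement that $(w_1,\dots,w_n)$ is a genuine holomorphic chart on $B(p,5)$: injectivity follows by separating points using the logarithmic weights built from the local charts (as in Proposition~\ref{prop:limitchart}), and non-degeneracy of the differential follows because the limiting map is the identity chart on the smooth part of $V$, together with the fact that $B(p,5)$ contains no positive-dimensional compact analytic subvariety (here one uses that $h$ gives a bounded plurisubharmonic function, so the maximum principle rules out compact subvarieties). The main obstacle I anticipate is precisely the interface between the metric and the complex structure in the presence of the cone singularity: because we only have a Ricci lower bound, the metrics do not converge smoothly on the regular set, so all of the closeness statements for the $w_k$ have to be extracted from the holomorphic charts of Theorem~\ref{thm:chart} and the weak ($L^1$/measure) convergence of potentials rather than from smooth convergence — in particular one must ensure the $L^2$-estimate on $\Omega$ is carried out with a weight $h$ that is only known to be continuous and plurisubharmonic, not smooth, which is why it is important that $\sqrt{-1}\,\partial\bar\partial h = \omega$ holds as currents with the stated positivity.
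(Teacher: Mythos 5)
Your proposal follows the paper's own argument essentially step for step: apply Proposition~\ref{prop:DS1} to obtain the peaked holomorphic section $s$ of $L^m$, set $h=-\log|s|^2$ to produce the Stein domain $\Omega$ with $B(p,10)\subset\Omega\subset\subset B(p,100)$, import the approximately holomorphic coordinates $w'_k$ from Lemma~4.6 of \cite{L1}, correct them by the H\"ormander $\bar\partial$-estimate with weight $e^{-h}$, and conclude that $(w_1,\dots,w_n)$ is a chart via the argument on page~18 of \cite{L1}. The only superfluous concern is the regularity of the weight: on $\Omega$ one has $|s|^2>e^{-1000}$, so $s$ is nonvanishing, $h$ is smooth there, and the weighted $L^2$-estimate applies in its classical form.
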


As a consequence of this we have the following result analogous to Proposition~\ref{prop:scalar}.

\begin{prop}\label{prop:cone2}
  Suppose that $(M_i^n, L_i, \omega_i)$ is a sequence of polarized K\"ahler manifolds with $\mathrm{Ric} > -1$, $\mathrm{vol}(B(q_i,1)) > \nu > 0$ for all $q_i\in M_i$. For $p_i\in M_i$, assume that $(M_i, p_i)$ converges to a metric cone $(V,o)$ in the pointed Gromov-Hausdorff sense, where $V=\mathbb{R}^{2n-2}\times W$ with $(W,o')$ a two-dimensional cone. Then
  \[ \lim_{i\to\infty} \int_{B(p_i,1)} S_i \omega_i^n = \omega_{2n-2} (2\pi -\alpha), \]
where $\omega_{2n-2}$ is the area of the unit ball in $\mathbb{R}^{2n-2}$ and $\alpha\in(0,2\pi)$ is the cone angle of $W$. Note that the distributional scalar curvature of $W$ is $(2\pi-\alpha)\delta_{o'}$.
\end{prop}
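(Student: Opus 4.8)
The plan is to run the argument of Proposition~\ref{prop:scalar}, but with the smooth limit metric there replaced by the explicit conical K\"ahler metric on $V=\mathbb{R}^{2n-2}\times W$, for which the distributional scalar curvature can be read off directly: by Gauss--Bonnet for the two-dimensional cone $W$ it equals $\mathcal{L}^{2n-2}\boxtimes\big((2\pi-\alpha)\delta_{o'}\big)$, Lebesgue measure on the flat factor tensored with a mass $(2\pi-\alpha)$ concentrated at the tip of $W$. Granting that $S_i\,\omega_i^n$ converges weakly, under the chart identifications, to this measure --- which is supported on $\mathbb{R}^{2n-2}\times\{o'\}$ and so puts no mass on $\partial B_V(o,1)$ --- the statement follows, since $B_V(o,1)\cap(\mathbb{R}^{2n-2}\times\{o'\})$ is the unit ball in $\mathbb{R}^{2n-2}$, of volume $\omega_{2n-2}$.

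First I would fix the charts and a local K\"ahler potential. After rescaling we may assume $B(p_i,\epsilon^{-1})$ is $\epsilon$-Gromov--Hausdorff close to a ball in $V$, so Proposition~\ref{prop:codim2cone} gives, for large $i$, holomorphic charts $(z_{i1},\dots,z_{in})$ on $B(p_i,5)$ which are $\Psi(\epsilon|n,\nu)$-close to the standard holomorphic coordinates $(z_1,\dots,z_{n-1},z_n)$ on $V$, with $z_n=r^{2\pi/\alpha}e^{2\pi\sqrt{-1}\theta/\alpha}$ on the $W$-factor; by Proposition~\ref{prop:limitchart} these converge to that coordinate chart. On a slightly smaller ball the holomorphic section of $L_i$ produced by Proposition~\ref{prop:DS1} is non-vanishing, so $u_i:=-\log|s|^2$ is a K\"ahler potential for $\omega_i$ with $u_i$ close to $d(\cdot,p_i)^2$; thus the $u_i$ are uniformly bounded, satisfy $\Delta u_i=2n$, and (Yau's gradient estimate) subconverge uniformly to the potential $u_\infty=|z_1|^2+\dots+|z_{n-1}|^2+|z_n|^{\alpha/\pi}$ of $\omega_V$. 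Setting $s_i=dz_{i1}\wedge\cdots\wedge dz_{in}$ and $v_i=\tfrac{1}{2\pi}\log|s_i|^2+u_i$, the Poincar\'e--Lelong formula gives $\sqrt{-1}\,\partial\bar\partial v_i=\mathrm{Ric}(\omega_i)+\omega_i\ge 0$, so the $v_i$ are smooth plurisubharmonic functions in the charts; exactly as in the proof of Lemma~\ref{lm1} --- using that the charts have uniformly bounded gradients, so $|s_i|^2$ is bounded below away from the singular stratum and $v_i$ cannot tend uniformly to $-\infty$, together with the sub-mean-value inequality and the Cheng--Yau and Green's function estimates --- one obtains $v_i\to v_\infty:=\tfrac{1}{2\pi}\log|dz_1\wedge\cdots\wedge dz_n|_{\omega_V}^2+u_\infty$ in $L^1_{\mathrm{loc}}$ with respect to the complex-analytic measure.

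Next I would identify the limit and pass to it. A direct computation with the product metric gives $\det g_V=\beta^2|z_n|^{2\beta-2}$, $\beta=\alpha/2\pi\in(0,1)$, hence $\mathrm{Ric}(\omega_V)=\tfrac{1}{2\pi}\sqrt{-1}\,\partial\bar\partial\log|dz_1\wedge\cdots\wedge dz_n|^2_{\omega_V}=\tfrac{2\pi-\alpha}{2\pi}[\{z_n=0\}]$, the current of integration over $\mathbb{C}^{n-1}\times\{o'\}$ with multiplicity $\tfrac{2\pi-\alpha}{2\pi}$, whose trace against $\omega_V^{n-1}$ is exactly the scalar curvature measure above. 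Since $S_i\,\omega_i^n$ is a fixed dimensional multiple of $\mathrm{Ric}(\omega_i)\wedge\omega_i^{n-1}=\sqrt{-1}\,\partial\bar\partial\big(\tfrac{1}{2\pi}\log|s_i|^2\big)\wedge(\sqrt{-1}\,\partial\bar\partial u_i)^{n-1}$, and since the $u_i$ are uniformly bounded and converge uniformly to $u_\infty$ while $\tfrac{1}{2\pi}\log|s_i|^2=v_i-u_i$ converges in $L^1_{\mathrm{loc}}$ and is locally uniformly bounded above, the continuity of the mixed Monge--Amp\`ere operator (Bedford--Taylor) yields weak convergence of these measures on the charts to $\mathrm{Ric}(\omega_V)\wedge\omega_V^{n-1}$; together with the volume convergence theorem of Colding~\cite{C} comparing $B(p_i,1)$ with $B_V(o,1)$, and the fact that the limit charges no mass on $\partial B_V(o,1)$, this gives $\int_{B(p_i,1)}S_i\,\omega_i^n\to\omega_{2n-2}(2\pi-\alpha)$.

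The hard part is the weak convergence of these Ricci (equivalently scalar curvature) measures in a tubular neighbourhood of the singular set: with no two-sided bound on $\mathrm{Ric}(\omega_i)$, one must rule out scalar curvature mass concentrating near $\Sigma^i$ beyond what the limiting current accounts for. This is precisely where we follow Chen--Donaldson--Sun~\cite{CDS2} instead of invoking \cite{CJN}: the degeneration $\det g_V\sim|z_n|^{2\beta-2}$ is mild and locally integrable since $\beta<1$, so the limiting mixed Monge--Amp\`ere mass is finite across the singular stratum, and, combined with the uniform bounded potentials $u_i$ and the $L^1$-convergence $v_i\to v_\infty$, this forces convergence with no loss. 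Concretely, integrating by parts --- using that $\omega_i$ is closed, so $\mathrm{Ric}(\omega_i)\wedge\omega_i^{n-1}=d\big(\sqrt{-1}\,\bar\partial(\tfrac{1}{2\pi}\log|s_i|^2)\wedge\omega_i^{n-1}\big)$ --- reduces $\int_{B(p_i,1)}\mathrm{Ric}(\omega_i)\wedge\omega_i^{n-1}$ to a boundary integral over $\partial B(p_i,1)$; one then bounds the contribution of an $\epsilon'$-neighbourhood of $\Sigma^i$ on the boundary sphere using the \emph{explicit} codimension-two structure $\Sigma_V=\mathbb{R}^{2n-2}\times\{o'\}$ of the singular set --- exactly as the cutoff $\psi^3_i$ is controlled in the proof of Proposition~\ref{prop:DS1} --- and passes to the limit on the complement, where the charts converge together with their first derivatives.
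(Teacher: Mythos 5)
Your overall strategy is the one the paper uses: obtain uniform holomorphic charts from Proposition~\ref{prop:codim2cone}, write $S_i$ via Poincar\'e--Lelong as $\frac{1}{2\pi}\Delta\log|s_i|^2$ with $s_i=dz_{i1}\wedge\cdots\wedge dz_{in}$, establish $L^1_{\mathrm{loc}}$ convergence of $\log|s_i|^2$ to $\log|s|^2$, identify the limiting Ricci current as $(2\pi-\alpha)$ times the integration current over $\mathbb{R}^{2n-2}\times\{o'\}$, and conclude by noting the limit measure puts no mass on $\partial B_V(o,1)$. That matches the paper, which simply reruns Proposition~\ref{prop:scalar} in this setting.

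However, the two places where you deviate from Proposition~\ref{prop:scalar} are exactly the places where your argument has gaps. First, the appeal to Bedford--Taylor continuity is not justified: $L^1_{\mathrm{loc}}$ convergence of plurisubharmonic functions together with a local uniform upper bound does \emph{not} imply weak convergence of the associated (mixed) Monge--Amp\`ere measures when the limit potential is unbounded, and here $\log|s|^2\to-\infty$ along the singular stratum; moreover both factors $\log|s_i|^2$ and $u_i$ vary with $i$. The paper avoids this entirely by pairing with a fixed test function $h$ and moving \emph{both} derivatives onto $h$, so that only $\int\log|s_i|^2\,\Delta_{\omega_i}h$ needs to converge --- which follows from the $L^1$ convergence already established in (the analogue of) Lemma~\ref{lm1} plus Claim~\ref{cl1} and volume convergence, as in \eqref{-13}. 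Second, your treatment of possible concentration near $\Sigma^i$ --- integrating by parts once to a boundary term $\int_{\partial B(p_i,1)}\sqrt{-1}\,\bar\partial\log|s_i|^2\wedge\omega_i^{n-1}$ and invoking the $\psi^3_i$ cutoff estimate --- does not close: the cutoff construction controls $\int|\nabla\psi^3_i|^2$, not the first derivative of the Ricci potential $\bar\partial\log|s_i|^2$ on a (non-smooth) geodesic sphere, and without a two-sided Ricci bound there is no control on that quantity. The paper's mechanism is different and is the point you should supply: the uniform $L^1$ bound \eqref{eq:b1} on $v_i$ shows that $\int_U|\log|s_i|^2|$ over a small neighborhood $U$ of the singular set is uniformly small in $i$, while $\int_U|\log|s||^2$ is small by direct computation on the cone (local integrability of $\log|z_n|$), so the contribution of $U$ to $\int\log|s_i|^2\,\Delta_{\omega_i}h$ is negligible and \eqref{eq:c1} upgrades to convergence on all of $M_i$. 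With those two repairs your argument coincides with the paper's.
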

\begin{proof}
  The proof of Proposition~\ref{prop:scalar} can be used essentially verbatim, using that under our assumptions Proposition~\ref{prop:codim2cone} gives suitable holomorphic charts on $B(p_i,10)$. The main difference is that now on the limit space the function $|s|=|dz_1\wedge\ldots\wedge dz_n|$ vanishes along the singular set, and so $\log |s|$ is unbounded. Instead of the statement of Lemma~\ref{lm1}, we have that for any neighborhood $U$ of the singular set (identified with a subset of $B(p_i, 4)$ using the chart),
  \[ \lim\limits_{i\to\infty}\int_{B(p_i, 4) \backslash U}\left|\log |s_i|^2-\log |s|^2\right|\omega_i^n = 0.\]
  Then just as in \eqref{-13} we will have
  \begin{equation}\label{eq:c1}\lim_{i\to\infty} \int_{M_i \setminus U} \log |s_i|^2 \Delta_{\omega_i} h = \int_{V\setminus U} \log |s|^2\Delta_\omega h.
\end{equation}
Notice that on sufficiently small neighborhoods $U$ of the singular set, the integrals of both $\log |s|$ and $\log |s_i|$ can be made arbitrarily small. The former by direct calculation, and the latter by the estimate \eqref{eq:b1}. It then follows from \eqref{eq:c1} that
\[ \lim_{i\to\infty} \int_{M_i} \log |s_i|^2 \Delta_{\omega_i} h = \int_{V} \log |s|^2\Delta_\omega h, \]
which implies the required result.
\end{proof}

Using this result, together with Cheeger-Jiang-Naber's~\cite{CJN} bounds we now prove Proposition~\ref{prop:Sintbdd}, which we state again for the reader's convenience.
\begin{prop}
Let $B(p,1)$ be a unit ball in a polarized K\"ahler manifold $(M^n,L,\omega)$ satisfying $\mathrm{Ric} > -1$, such that $\mathrm{vol}(B(p,1)) > v > 0$. Then $\int_{B(p,1)} S < C(n,v)$.
\end{prop}
\begin{proof}
  For any $l >0$, let $A_l$ denote the supremum of $\int_{B(p,1)} |S|$ over all unit balls as in the statement, with the additional condition that $\mathrm{Ric} < l$. Our goal is to show that $A_l$ is bounded independently of $l$. Note that any $A_l$ is finite by volume comparison. Also it is convenient to replace the condition $\mathrm{vol}(B(p,1)) > v > 0$ by
  \[ r^{-2n}\mathrm{vol}(B(q,r)) > v' > 0, \text{ for all $q\in B(p,1)$ and $r < 1$,} \]
  since this condition is preserved when passing to smaller balls. The two conditions imply each other for suitable $v,v'$ by volume comparison.

Let us recall the following notion from Cheeger-Jiang-Naber~\cite[Definition 1.3]{CJN}. A ball $B(x,r)$ in a metric space is $(k,\epsilon)$-symmetric if there is a metric cone $X' = \mathbf{R}^k\times C(Z)$ with vertex $x'$ splitting an isometric factor of $\mathbf{R}^k$, such that $d_{GH}(B(x,r), B(x',r)) < \epsilon r$.
From Corollary~\ref{cor:Sintsmall} and Proposition~\ref{prop:cone2} we find that there are constants $\epsilon, C_1 > 0$ with the following property. If $B(q,\epsilon^{-1})$ is a ball in a polarized K\"ahler manifold with $\mathrm{Ric} > -1, \mathrm{vol}(B(q,1)) > v$, and $B(q,\epsilon^{-1})$ is $(2n-2, \epsilon^2)$-symmetric, then $\int_{B(q,1)} |S|< C_1$.

Let $r > 0$ be small, to be chosen later, and set $k=2n-3$. As in \cite{CJN}, let $S^{2n-3}_{\epsilon^2, r}$ denote the points $x\in B(p,1)$ such that $B(x,s)$ is not $(2n-2,\epsilon^2)$-symmetric for any $s\in [r,1)$. Let us choose $x_1,\ldots, x_{N_r} \in S^{2n-3}_{\epsilon^2, r}$ such that $B(x_i, r)$ cover $S^{2n-3}_{\epsilon^2, r}$, while $B(x_i, r/3)$ are disjoint.
By \cite[Remark 1.10]{CJN} we have that $N_rr^{2n-3} \leq C_\epsilon$. In addition, if $\mathrm{Ric} < l$, then after scaling the ball $B(x_i,r)$ to unit size, it will still have Ricci curvature bounded by $l$. We can assume that $r^{-2}$ is an integer so the scaled up manifold is still polarized.  It follows by scaling that
\[ \int_{B(x_i, r)} |S| < r^{2n-2}A_{l}. \]

If $y\not\in \bigcup B(x_i,r)$, then by definition there is an $s\in [r, 1)$ such that $B(y, s)$ is $(2n-2, \epsilon^2)$-symmetric. It follows, after rescaling the result above that
\[ \int_{B(y, \epsilon s)} |S| < (\epsilon s)^{2n-2} C_1. \]
We can now cover $B(p,1) \setminus \bigcup B(x_i,r)$ by such balls $B(y_j, \epsilon s_j)$, such that the $B(y_j, \epsilon s_j/5)$ are disjoint. We then have
\[ \sum_j (\epsilon s_j)^{2n} < C_2, \]
and
\[ \int_{B(p,1)\setminus \bigcup_i B(x_i, r)} |S| \leq \sum_j \int_{B(y_j, \epsilon s_j)} |S| \leq C_1 \sum_j (\epsilon s_j)^{2n-2} < C_1C_2 (\epsilon r)^{-2}, \]
using $s_j \geq r$. In sum we get
\[ \int_{B(p,1)}  |S| < N_r r^{2n-2}A_{l} + C_1C_2 (\epsilon r)^{-2} \leq r C_\epsilon A_{l} + C_1C_2 (\epsilon r)^{-2}. \]
We now choose $r$ so that $r C_\epsilon < 1/2$. It follows that
\[ A_l \leq \frac{1}{2} A_l + C',\]
where $C'$ is independent of $l$. This implies our result.
\end{proof}

Let us now return to the setting of the beginning of the section, i.e. $(M_i^n, L_i, \omega_i)$ are polarized K\"ahler manifolds with $\mathrm{Ric}(\omega_i) > -\omega_i, \mathrm{diam}(M_i) < d, \mathrm{vol}(M_i) < v$, converging to $X$ in the Gromov-Hausdorff sense.
Given Proposition~\ref{prop:DS1}, an argument by contradiction implies the partial $C^0$-estimate and separation of points:
\begin{prop}
Given any point $p\in X$, take a sequence $M_i\ni p_i\to p$. There exist $\delta = \delta(n, v, d)>0$,  $K(n, v, d)\in\mathbb{N}$ and holomorphic sections $s_i$ over $L_i^m (m<K(n, v, d))$ so that
$\int |s_i|^2 = 1$, $|s_i(p_i)| \geq \delta$.

Furthermore, given any two points $p, q\in X$ with $d(p, q)>a>0$ and sequences $M_i\ni p_i\to p, M_i\ni q_i\to q$, we can find holomorphic sections $s_i^1, s_i^2$ of $L^m (m<K(n, v, a, d))$ and $\delta=\delta(n, v, a, d)$ so that
\begin{itemize}
\item $\int |s_i^1|^2 + |s_i^2|^2<1$;
\item $|s_i^1(p_i)| = \delta, s_i^1(q_i) = 0$;
\item $s_i^2(p_i) = 0, |s_i^2(q_i)| = \delta$.
\end{itemize}
\end{prop}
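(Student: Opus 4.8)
The plan is to deduce both assertions from Proposition~\ref{prop:DS1}, and to argue by contradiction so that the constants $\delta$ and $K$ come out uniform. The step I expect to be the main obstacle is the first one: showing that near an arbitrary point one can rescale the metric by a factor bounded only in terms of $n,v,d$ so that the ball $B(p,\epsilon_1^{-1})$ becomes $\epsilon_1$-close to a metric cone ball, where $\epsilon_1$ is the constant of Proposition~\ref{prop:DS1}; once this is available the Proposition applies with a power bounded independently of the point.

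\emph{A bounded almost-conical scale.} Fix the constants $\epsilon_1=\epsilon_1(n,\nu)$, $K'=K'(\nu,\zeta)$, $C'=C'(\nu,\zeta)$ of Proposition~\ref{prop:DS1}, where $\nu=\nu(n,v,d)>0$ is chosen (by Bishop--Gromov and the diameter and volume bounds) so that $\mathrm{vol}_{\omega_i}(B(q,r))>\nu r^{2n}$ for all $q\in M_i$ and $r\le1$. For a point $p_i\in M_i$ I would look at the renormalised volume ratio $\theta(r)=\mathrm{vol}(B(p_i,r))/(\omega_{2n}r^{2n})$. Since $M_i$ is a smooth K\"ahler manifold with $\mathrm{Ric}\ge-1$, on a fixed interval $(0,r_0]$ this function is $\Psi(r\,|\,n)$-almost monotone, has $\theta(0^+)=1$, and $\theta(r)\ge\nu/\omega_{2n}$; hence its total variation over $(0,r_0]$ is at most a constant $V_0=V_0(n,v,d)$. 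Partitioning $(0,r_0]$ into the intervals $[\,r_0\delta^{2(j+1)},\,r_0\delta^{2j}\,]$ and using $V_0$, one of the first $N_0(n,v,d)+1$ of them carries variation less than $\delta$, which puts us in the hypotheses of the almost-volume-cone implies almost-metric-cone theorem of Cheeger--Colding (applied after rescaling the corresponding ball of radius $s$, with $c_0(n,v,d)\le s\le r_0$, to unit size, so that the Ricci lower bound $-r_0^2$ is as small as needed). Thus $B(p_i,s)$ is $\Psi(\delta\,|\,n)$-close to a radius-$s$ ball in a metric cone. I would then take $m_0=\lceil(s\epsilon_1)^{-2}\rceil$; since $c_0\le s\le r_0$ this gives $m_0\le K_0(n,v,d)$, and choosing $r_0$ small it also gives $m_0>\epsilon_1^{-1}$. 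Replacing $(L_i,\omega_i)$ by $(L_i^{m_0},m_0\omega_i)$ we then have $\mathrm{Ric}(m_0\omega_i)>-\epsilon_1(m_0\omega_i)$, $\mathrm{vol}_{m_0\omega_i}(B(q,1))>\nu$ for all $q$, and, taking $\delta=\delta(\epsilon_1,n)$ small, $d_{GH}(B_{m_0\omega_i}(p_i,\epsilon_1^{-1}),B_V(o,\epsilon_1^{-1}))<\epsilon_1$ for a metric cone $(V,o)$.

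\emph{The partial $C^0$-estimate.} Proposition~\ref{prop:DS1} now yields a holomorphic section $s$ of $L_i^{m_0m'}$ with $m'<K'$, with $\|s\|_{L^2(m_0\omega_i)}\le C'$ and $\big||s(x)|-e^{-m'd_{m_0\omega_i}(x,p_i)^2/2}\big|<\zeta$. In particular $|s(p_i)|\ge1-\zeta$, and from this peak behaviour together with non-collapsing one gets $\|s\|_{L^2}\ge c(n,v,d)>0$. Normalising, $s_i=s/\|s\|_{L^2}$ is a holomorphic section of $L_i^m$ with $m=m_0m'<K_0K'=:K(n,v,d)$, $\int|s_i|^2=1$, and $|s_i(p_i)|\ge(1-\zeta)/C'=:\delta(n,v,d)>0$. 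To be safe about the uniformity of $\delta$ and $K$ I would package this as a contradiction: a sequence of counterexamples would, after passing to a subsequence, converge to some limit $X$ with $p_i\to p$, and the construction above would supply the required section for all large $i$, a contradiction.

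\emph{Separation of points.} Finally I would run the construction above separately at $p_i$ and at $q_i$ — the constant $\nu$ being uniform over base points — obtaining peak sections $\sigma$ of $L_i^{M_p}$ and $\tau$ of $L_i^{M_q}$ with $M_p,M_q<K(n,v,d)$, with $|\sigma(p_i)|,|\tau(q_i)|\ge1-\zeta$, with $L^2$-norms at most $C'$, and with the Gaussian estimate valid on all of $M_i$. Since $d_{\omega_i}(p_i,q_i)\to d(p,q)>a$, for large $i$ we have $|\sigma(q_i)|,|\tau(p_i)|\le q_0:=e^{-a^2/2}+\zeta$. Choosing $\zeta=\zeta(n,v,a,d)$ so small that $q_0<1-\zeta$, and raising to a common power $m$ equal to a sufficiently large multiple $k=k(n,v,a,d)$ of the least common multiple of $M_p$ and $M_q$, the sections $\sigma^{m/M_p}$ and $\tau^{m/M_q}$ of $L_i^m$ satisfy $|\sigma^{m/M_p}(q_i)|/|\sigma^{m/M_p}(p_i)|\le(q_0/(1-\zeta))^{m/M_p}$ and the symmetric bound, both as small as we wish. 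Then $s_i^1=\sigma^{m/M_p}-\tfrac{\sigma^{m/M_p}(q_i)}{\tau^{m/M_q}(q_i)}\,\tau^{m/M_q}$ vanishes at $q_i$ and has $|s_i^1(p_i)|\ge\tfrac12(1-\zeta)^{m/M_p}$, with $L^2$-norm bounded by $C_2(n,v,a,d)$, and $s_i^2$ is built symmetrically. Dividing $s_i^1,s_i^2$ by a suitable fixed constant gives $\int|s_i^1|^2+\int|s_i^2|^2<1$ and $|s_i^1(p_i)|=|s_i^2(q_i)|=\delta$ for some $\delta=\delta(n,v,a,d)>0$, with the power $m<K(n,v,a,d)$. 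The only genuinely delicate point in this last paragraph is to make the subtracted term negligible at the non-vanishing point, which is exactly what forces $\zeta$ (hence the power $m$) to depend on $a$; the true core of the argument remains the production of the bounded almost-conical scale $m_0$.
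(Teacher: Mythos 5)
Your proposal is correct and follows essentially the same route as the paper, which at this point simply asserts that the proposition follows from Proposition~\ref{prop:DS1} ``by an argument by contradiction'' in the manner of Donaldson--Sun. Your write-up fills in exactly the standard details of that deduction: the pigeonhole argument on the almost-monotone volume ratio to produce a good almost-conical scale bounded in terms of $n,v,d$, the rescaling to place oneself in the hypotheses of Proposition~\ref{prop:DS1}, and the raising of peak sections to a common power to separate points.
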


By following the same arguments as in \cite[Section 4.3.1]{DS1} we can prove that $X$ is homeomorphic to a projective variety and after suitable projective embeddings a subsequence of the $M_i$ converge to $X$ as algebraic varieties. In addition, Proposition~\ref{prop:codim2cone} implies that $X$ is complex analytically regular near the points $p\in X$ which admit tangent cones splitting off $\mathbb{R}^{2n-2}$. The remainder of $X$ has Hausdorff dimension at most $2n-4$ by \cite{[CC2]}, which implies as in \cite{DS1} that $X$ is normal. This completes the proof of Theorem~\ref{thm:main}.

\section{Complex analytic and metric singularities}\label{sec:singularities}
As in the previous section, given $n, d, v$, let $(M_i^n, \omega_i, L_i)$ be polarized K\"ahler manifolds with $\mathrm{Ric}(M_i) > -1, \mathrm{diam}(M_i) < d, \mathrm{vol}(M_i) > v$, such that $M_i$ converge in the Gromov-Hausdorff sense to $X$. Then $X$ has the structure of a normal projective variety, and it is also a metric space. In this section we will study the relation between the singular sets of $X$ in the metric sense and in the complex analytic sense.

Pick a point $p\in X$, and take a sequence $M_i\ni p_i\to p$. As a consequence of Proposition~\ref{prop:DS1}, for sufficiently large $i$, there exists $r_0>0$ independent of $i$ and a smooth $u_i$ on $B(p_i, r_0)$ so that $\sqrt{-1}\partial\overline\partial u_i = \omega_i$ and $u_i(x)$ is close to $d^2(x, p_i)$. Since $\mathrm{Ric}(\omega_i) > -\omega_i$, $\Theta_i = \mathrm{Ric}(\omega_i)+\sqrt{-1}\partial\overline\partial u_i$ is a closed positive current on $B(p_i, r_0)$. Now assume that $p$ is a complex analytically regular point on $X$(i.e., a non-singular point on the variety $X$). By shrinking the value of $r_0$ if necessary, by solving the $\bar\partial$-problem with weights $e^{-u_i}$, we can find uniform holomorphic charts $(z^i_{1}, ..., z^i_{n})$ on $B(p_i, r_0)$ so that the charts converge to a holomorphic chart $(U, (z_1, .., z_n))$ near $p$. As before, we can use these charts to identify the balls $B(p_i, r_0/2)$ with corresponding subsets of $U$. Let us say $z_j(p) = 0$ for all $j = 1, ..., n$, and define
\[ v_i = \frac{1}{2\pi}\log |dz^i_{1}\wedge dz^i_{2}\wedge\ldots\wedge dz^i_{n}|^2 + u_i.\]
Note that $\Theta_i = \sqrt{-1}\partial\overline\partial v_i\geq 0$. Since $U$ is an open set, $|dz^i_{1}\wedge dz^i_{2}\wedge\ldots\wedge dz^i_{n}|$ cannot go to zero uniformly, as $i\to\infty$. Thus $v_i$ cannot go uniformly to $-\infty$. By taking a subsequence, we may assume that $v_i$ converges, in $L^1_{loc}$ sense (with respect to the Lebesgue measure given by the charts), to a plurisubharmonic function $v$ on $U$. As $u_i$ has uniformly bounded gradient (note that $\Delta u_i = 2n$), we can also assume that $u_i\to u$ uniformly. Define
\begin{equation}\label{-15}\mathrm{Ric} =  \sqrt{-1}\partial\overline\partial (v-u).\end{equation}
Then $\mathrm{Ric}$, as a closed $(1, 1)$ current, is well defined on the complex analytically regular part of $X$. Note that $\mathrm{Ric}$ is a closed positive current, up to $\sqrt{-1}\partial\overline\partial u$ for a bounded function $u$. Thus the Lelong number for $\mathrm{Ric}$, given by
\begin{equation}\label{-16}\frac{1}{2\pi}\liminf_{x\to p}\frac{\log |dz_{1}\wedge dz_{2}\wedge\ldots\wedge dz_{n}|^2}{\log |z(x)|},\end{equation}
is well defined. Here the numerator can be defined as the limit of $\log |dz^i_{1}\wedge dz^i_{2}\wedge\ldots\wedge dz^i_{n}|^2$.

The main result in this section is the following
\begin{prop}\label{prop-10}
A point $p\in X$ is regular in the metric sense if and only if it is complex analytically regular and the Lelong number for $\mathrm{Ric}$ vanishes at $p$.
\end{prop}
\begin{proof}
  It is clear from Theorem~\ref{thm:chart} that if $p$ is regular in the metric sense, then $p$ must be complex analytically regular. Now let us prove that if $X$ is complex analytically regular at $p$ and the Lelong number for $\mathrm{Ric}$ is zero at $p$, then $p$ is a regular point in $X$ in the metric sense. We first need some preliminary results.

\begin{claim}\label{cl-10}
There exists $a>0$, $b=b(n, v, d)>0$ so that for all $r<a$, and any point $q\in \partial B(p, r)$, there exists a holomorphic function $f$ on $B(p, 4r)$ so that $f(p) = 0$, $\sup\limits_{B(p, 2r)} |f| = 1$ and $|f(q)|>b$.
\end{claim}
\begin{proof}
Assume that the claim is false. Then there exist sequences $r_i, b_i\to 0$ and $q_i\in \partial B(p, r_i)$ so that for all holomorphic functions $f_i$ on $B(p, 4r_i)$ with $f_i(p) = 0$ and $\sup\limits_{B(p, 2r_i)}|f_i| =1$, $|f_i(q_i)|<b_i$. By passing to a subsequence, we may assume that $(X_i, p_i, d_i)= (X, p, \frac{d}{r_i})$ converges to a metric cone $(V, o)$ in the pointed Gromov-Hausdorff sense. Assume $q_i\to q\in \partial B(o, 1)$.

To get a contradiction, we can prove results similar to Theorem $1.4$ and Proposition 2.9 of \cite{DS2} (alternatively, Proposition $6.1$ in \cite{L2}). The proof follows by a very minor modification, so we skip the details. Then on $(V,o)$ we can find a holomorphic function vanishing at $o$ but nonzero at $q$, which we can lift to $(X_i, p_i)$ for sufficiently large $i$. The lifted holomorphic functions will have a uniform lower bound at $q_i$, giving a contradiction. It is clear from the argument that $b$ depends only on $n, v, d$.
\end{proof}

\begin{claim}\label{cl-11}
Let $p$ be a complex analytically regular point on $X$. Let $(z_1, ..., z_n)$ be a holomorphic chart near $p$, such that $z_j(p) = 0$ for all $j$. Then there exists $\alpha = \alpha(n, v, d)>0$, $C>0, c>0$ so that $cr(q)^\alpha\leq |z(q)|\leq Cr(q)$ for all $q$ sufficiently close to $p$. Here $r$ is the distance function to $p$.
\end{claim}
\begin{proof}
The inequality $|z(q)|\leq C r(q)$ follows directly from the gradient estimate. Now we prove the first inequality. Let $a>0, b = b(n, v, d)>0$ be the constants in Claim \ref{cl-10}. Let us fix a small $r_0<a$. We may assume that $B(p, 2r_0)$ is contained in the holomorphic chart $(z_1, ..., z_n)$. For any $\rho>0$, let $U_\rho$ be the open set so that $|z|<\rho$. Since $(z_1, .., z_n)$ is a holomorphic chart, for $\delta$ sufficiently small, we may assume that $U_\delta\subset B(p, 2r_0)$. Pick an arbitrary $q\in \partial B(p, r_0)$. According to Claim \ref{cl-10}, there exists a holomorphic function $f$ on $B(p, 4r_0)$ so that $f(p) = 0$, $\sup\limits_{B(p, 2r_0)}|f| = 1$, $|f(q)|>b$.
If $q \in U_\delta$, we restrict $f$ to $U_\delta$. As $f(p) = 0$, by the standard Hadamard three circle theorem on $B_{\mathbb{C}^n}(0, \delta)$ we find $\frac{|f(q)|}{|z(q)|}\leq \frac{\sup_{U_\delta}|f|}{\delta}$, thus $|z(q)|\geq b\delta$. If $q\notin U_\delta$, then by definition $|z(q)|\geq b\delta$. Since $q$ is arbitrary on $\partial B(p, r_0)$, we find that $U_{b\delta}\subset B(p, r_0)$. Iterating this result, we obtain that $U_{b^k\delta} \subset B(p, 2^{1-k}r_0)$, which implies the first inequality $c r(q)^\alpha \leq |z(q)|$.
\end{proof}

\begin{claim}\label{cl-12}Assume that $p$ is not a regular point in the metric sense. Then
  there exist $\epsilon>0$ and $r_0>0$ satisfying the following. For all $r<r_0$, if nonzero holomorphic functions $f_1, .., f_n$ on $B(p, 4r)$ satisfy $f_j(p) = 0$ and $\int_{B(p, r)}f_j\overline{f}_k = 0$ for $j\neq k$,  then there exists $1\leq l\leq n$ so that
  \[ \frac{\dashint_{B(p, 2r)}|f_l|^2}{\dashint_{B(p, r)}|f_l|^2}\geq 2^{2+10n\epsilon}. \]
\end{claim}
\begin{remark}
From the proof it follows that $\epsilon$ depends only on $\omega_{2n}- \lim\limits_{r\to 0}\frac{\mathrm{vol}(B(p, r))}{r^{2n}}$, where $\omega_{2n}$ is the volume of the unit ball of $\mathbb{C}^n$.
\end{remark}
\begin{proof}
If this is not the case, then we can find  sequences $\epsilon_i\to 0$, $r_i\to 0$, and nonzero holomorphic functions $f_{i1}, ..., f_{in}$ on $B(p, 4r_i)$ so that $f_{ij}(p) = 0$ and $\int_{B(p, r_i)}f_{ij}\overline{f}_{ik} = 0$ for $j\neq k$. Also for all $j$, $$\frac{\dashint_{B(p, 2r_i)}|f_{ij}|^2}{\dashint_{B(p, r_i)}|f_{ij}|^2}< 2^{2+10n\epsilon_i}.$$
Define $(X_i, p_i, d_i) = (X, p, \frac{d}{r_i})$. By passing to a subsequence, we may assume that $(X_i, p_i)$ converges in the pointed Gromov-Hausdorff sense to a tangent cone $(V, o)$ at $p$.
We trivially lift $f_{ij}$ to $B(p_i, 4)$ on $X_i$. By normalization, we may assume that  $\dashint_{B(p_i, 1)}|f_{ij}|^2 = 1$ for all $j$.  Then after taking a further subsequence, $f_{ij}$ converges uniformly on each compact set of $B(o, 2)$ to linearly independent complex harmonic functions $h_j$. These satisfy $\dashint_{B(o, 1)}|h_j|^2 = 1$, $h_j(o) = 0$ for all $j$, and $\dashint_{B(o, 2)}|h_j|^2\leq 4$. By the spectral decomposition for the Laplacian on the cross section, the $h_j$ can be extended as degree one homogeneous complex harmonic functions on $V$. Therefore we have $2n$ linearly independent real harmonic functions of linear growth which all vanish at $o$. Then it is well known that $(V, o)$ is isometric to $\mathbb{R}^{2n}$ (see Proposition~\ref{prop:splitting} in the Appendix). This contradicts the assumption that $p$ is not a regular point.
\end{proof}

Now let $p\in X$ be a regular point in the complex analytic sense, but not in the metric sense. We claim that the Lelong number of $\mathrm{Ric}$ at $p$ is positive.
Let $(z_1, .., z_n)$ be a holomorphic chart around $p$. For $r_0$ small, we may assume $B(p, 2r_0)$ is contained in the chart. By suitable scaling and orthogonalization of $z_j$, we may assume \begin{equation}\label{-17} z_j(p)=0, \quad \dashint_{B(p, 2r_0)} |z_j|^2= 1, \quad (j=1, ..., n),
\end{equation}
\begin{equation}\label{-18}\int_{B(p, 2r_0)}z_j\overline{z}_k = 0, \text{ for }j\neq k.\end{equation}
By scaling  the metric, let us assume without loss of generality that $r_0 = 1$. Then by the gradient estimate
\begin{equation}\label{-19}|dz_1\wedge\ldots\wedge dz_n|\leq C=C(n, v, d) \text{ on } B(p,1). \end{equation}
Let $E$ be the linear space spanned by $z_1, ..., z_n$. On $E$, we have two norms, given by $L^2$ integration over $B(p, 2)$ and $B(p, 1)$. After a simultaneous diagonalization with respect to these two norms, we may assume that $z_j$ are also $L^2$ orthogonal on $B(p, 1)$. Let $\epsilon$ be the positive number appearing in Claim \ref{cl-12}. We may assume that $B(p, 10)$ is sufficiently close to a metric  cone, such that \begin{equation} \label{eq:22}
  \frac{\dashint_{B(p, 2)}|z_j|^2}{\dashint_{B(p, 1)}|z_j|^2}\geq 2^{2-\epsilon}
\end{equation}
for all $j$, since on the cone there are no non-constant sublinear harmonic functions. According to Claim \ref{cl-12}, we can find $l$ so that
$$\frac{\dashint_{B(p, 2)}|z_l|^2}{\dashint_{B(p, 1)}|z_l|^2}\geq 2^{2+10n\epsilon}.$$
Define $z'_j = z_j2^{-\epsilon}$ for $j\neq l$; $z'_l = z_l 2^{(n-1)\epsilon}$, so that \[dz'_1\wedge dz'_2\wedge\ldots\wedge dz'_n = dz_1\wedge dz_2\wedge\ldots\wedge dz_n\]
at any point. Moreover, from \eqref{eq:22} we have
\[ \dashint_{B(p, 1)} |2z'_k|^2\leq 2^{-\epsilon}\]
for all $k$. Using the gradient estimate we obtain that on $B(p, \frac{1}{2})$, $$|dz_1\wedge dz_2\wedge \ldots\wedge dz_n| = |dz'_1\wedge dz'_2\wedge \ldots\wedge dz'_n| \leq C2^{-0.5n\epsilon}\leq C2^{-\epsilon}.$$  Here $C$ is the same constant as in (\ref{-19}). By iteration, we obtain that for all $0<r<1$, on $B(p, r)$, $$|dz_1\wedge dz_2\wedge \ldots\wedge dz_n|\leq 2Cr^{\epsilon}.$$

   According to Claim \ref{cl-11}, Claim \ref{cl-12}, the Lelong number for $2\pi\mathrm{Ric}$ at $p$ is given by $$\lim\inf_{x\to p}\frac{\log |dz_1\wedge dz_2\wedge \ldots\wedge dz_n|^2(x)}{\log |z(x)|}\geq \lim\inf_{x\to p}\frac{2\log (2Cr(x)^\epsilon)}{\log(cr(x)^\alpha)} = \frac{2\epsilon}{\alpha}>0.$$
 This means that if $p$ is complex analytically regular and the Lelong number for $\mathrm{Ric}$ vanishes at $p$, then $p$ is regular in the metric sense.

 \bigskip

We are left to prove that if $p$ is regular in the metric sense, then the Lelong number for $\mathrm{Ric}$ vanishes at $p$.
Since $p$ is metric regular, for any fixed small $\epsilon>0$, by scaling, we may assume that $B(p, \frac{1}{\epsilon})$ is $\epsilon$-Gromov Hausdorff close to a ball in $\mathbb{C}^n$. Then we can find a holomorphic map $F=(f_1, ..., f_n)$ on $B(p, 100)$ which gives a $\Psi(\epsilon|n)$-Gromov Hausdorff approximation to its image in $\mathbb{C}^n$. As in the proof of Theorem~\ref{thm:chart}, $F$ is a holomorphic chart on $B(p, 1)$. Without loss of generality, we may assume $f_j(p) = 0, \int_{B(p, 1)}f_j\overline{f_k} = 0$ for $j\neq k$.
We have
\begin{equation}\label{-100}\frac{\dashint_{B(p, 2)}|f_j|^2}{\dashint_{B(p, 1)}|f_j|^2}\leq 4+\Psi(\epsilon|n),\end{equation}
and since $B(p, r)$ is $\Phi(\epsilon|n)r$-Gromov-Hausdorff close to a Euclidean ball for all $r>0$, just as in Lemma~\ref{lem:3annulus} we conclude that \begin{equation}\label{eq:101} 4-\Psi(\epsilon|n)\leq\frac{\dashint_{B(p, 2r)}|f_j|^2}{\dashint_{B(p, r)}|f_j|^2}\leq 4+\Psi(\epsilon|n)
\end{equation}
for all $r<1$. For any $r>0$, we may assume that the $f_1, ..., f_n$ are orthogonal simultaneously with respect to the $L^2$ inner products on $B(p, 1)$ and $B(p, r)$.
Now let $c_j$ be constants (depending on $r$) so that $\sup\limits_{B(p, r)}|c_jf_j| = r$. Define $f'_j = c_jf_j$.
As in the proof of Proposition~\ref{prop:gapthm}, arguing as in Claim $4.2$ of \cite{L2}, we see that $F' = (f'_1, .., f'_n)$ is a $\Psi(\epsilon|n)r$-Gromov-Hausdorff approximation to a ball in $\mathbb{C}^n$. The Cheeger-Colding~\cite{CC} estimate (see Equation~\eqref{eq:CCestimate}) implies that
$$\sup\limits_{B(p, r)}|df'_1\wedge df'_2\wedge\ldots\wedge df'_n|\geq c(n)>0,$$
and note that by (\ref{eq:101}),
$$\dashint_{B(p, 2r)}|f_j|^2\geq c(n)r^{2+\Psi(\epsilon|n)}.$$
Thus
$$|c_j|\leq C(n)r^{-\Psi(\epsilon|n)},$$
and so
$$\sup\limits_{B(p, r)}|df_1\wedge df_2\wedge\ldots\wedge df_n|\geq c(n)r^{\Psi(\epsilon|n)}.$$
It follows that the Lelong number at $p$ satisfies
$$\lim\inf_{x\to p}\frac{\log |df_1\wedge df_2\wedge \ldots\wedge df_n|^2(x)}{2\pi\log |F(x)|}\leq \lim\inf_{x\to p}\frac{2\log(c(n)r(x)^{\Psi(\epsilon|n)})}{2\pi\log (Cr(x))}=\Psi(\epsilon|n).$$
As $\epsilon$ is arbitrary, we find that the Lelong number at $p$ is zero. The proof of Proposition \ref{prop-10} is complete.
\end{proof}

Let $A$ be the complex analytically singular set of $X$. Let $c$ be a positive constant, and let $H_c$ be set of points whose Lelong number for $\mathrm{Ric}$ is at least $c$ on $X\backslash A$. By Siu's theorem \cite{S2}, $H_c$ is a complex analytic set of $X\backslash A$. We thank Professor Siu for providing the proof of the following.
\begin{lemma}\label{SiuLemma}
The topological closure of $H_c$ in $X$ is a complex analytic set.
\end{lemma}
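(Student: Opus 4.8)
The plan is to show that $\overline{H_c}$ is analytic by proving that near any point $p\in A$ it coincides, as a set, with an analytic subvariety of a neighborhood of $p$ in $X$; combined with Siu's theorem on $X\setminus A$ and the fact that $A$ itself is analytic, this gives the result. The strategy is to produce a \emph{uniform} analytic obstruction to membership in $H_c$ that survives the passage to the closure. Concretely, fix $p\in A$ and a small embedding of a neighborhood $U$ of $p$ as an analytic subvariety of a polydisc in $\mathbb{C}^N$. On $U\setminus A$ the current $\mathrm{Ric}$ is, up to $\sqrt{-1}\partial\bar\partial u$ for a bounded function $u$, a closed positive $(1,1)$-current, so it extends (Skoda--El Mir, applied on a resolution or using that $A$ has the right capacity/Hausdorff dimension) to a closed positive current $T$ on all of $U$ with bounded local potential modulo pluriharmonic terms. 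The Lelong numbers of $T$ agree with those of $\mathrm{Ric}$ on $U\setminus A$, and by upper-semicontinuity of Lelong numbers, $\overline{H_c}\cap U\subseteq E_c(T):=\{x\in U: \nu(T,x)\geq c\}$. By Siu's theorem $E_c(T)$ is an analytic subset of $U$.

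The remaining point is the reverse inclusion, or rather to identify $\overline{H_c}$ with a specific analytic set rather than merely sandwiching it. Here I would argue that on $U\setminus A$ one has $\overline{H_c}\cap(U\setminus A)=H_c\cap(U\setminus A)=E_c(T)\cap(U\setminus A)$ by definition and Siu on the regular part, and then use that $E_c(T)$ is analytic and irreducible-component-wise either contained in $A$ or meets $U\setminus A$ in a dense open subset of itself. Writing $E_c(T)=Z_1\cup\cdots\cup Z_k$ into irreducible components, each $Z_j$ is either contained in $A$ or has $Z_j\cap(U\setminus A)$ dense in $Z_j$; discarding the components lying inside $A$ but not in $\overline{H_c}$, and taking closures of the surviving ones, we get that $\overline{H_c}\cap U$ is a union of some of the $Z_j$, hence analytic. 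One must check that $H_c\cap(U\setminus A)$, being analytic in $U\setminus A$ by Siu, has closure equal to a union of components of $E_c(T)$: this follows because $\overline{H_c\cap(U\setminus A)}\subseteq E_c(T)$ and $H_c\cap (U\setminus A)$ is, near each of its points, a full-dimensional analytic subset of $E_c(T)\setminus A$ there, so it is open and closed in the smooth part of each component it meets.

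A technical subtlety I want to flag is the definition of $\mathrm{Ric}$ and its Lelong number across $A$. The quantity in \eqref{-16} is defined using a holomorphic chart near a complex analytically regular point, and its value is independent of the chart because the transition Jacobians are nonvanishing holomorphic; but near a singular point $p\in A$ one must make sense of $T=\mathrm{Ric}$ as a global current on $U$. The clean way is: $v_i$ (the local potentials $\tfrac1{2\pi}\log|dz_{i1}\wedge\cdots\wedge dz_{in}|^2+u_i$ from Section~\ref{sec:singularities}) are plurisubharmonic, uniformly bounded above, and do not tend uniformly to $-\infty$ on $U\setminus A$; passing to an $L^1_{loc}$ limit on the regular part gives a psh $v$ whose $\sqrt{-1}\partial\bar\partial$ is $2\pi\mathrm{Ric}$, and since $A$ is a proper analytic subset (in particular pluripolar and of the right capacity), the Skoda--El Mir extension applies to the closed positive current $\mathrm{Ric}+C\,\omega_{\mathbb{C}^N}$ (for $\omega_{\mathbb{C}^N}$ the ambient form, using the bound on $u$) to extend it across $A$ without adding mass. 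This extension is the $T$ above.

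The main obstacle I expect is precisely this extension-across-$A$ step and the verification that it does not change Lelong numbers or introduce spurious components supported on $A$: one needs that $A$ does not carry positive-dimensional pieces of mass for the extended current unless they are genuine limits of high-Lelong-number loci, which is where care with the Skoda--El Mir hypotheses (zero $2n-2$ Hausdorff measure would be too strong; one should use the weaker capacity/pluripolarity criterion, or pull back to a resolution $\pi:\widetilde U\to U$ with $\pi^{-1}(A)$ a simple normal crossing divisor, extend there by the divisor case, and push forward). Once $T$ is constructed, Siu's theorem and the irreducible-component bookkeeping are routine, so I would budget most of the effort for setting up $T$ correctly.
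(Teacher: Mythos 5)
Your overall architecture matches the paper's: localize near $p\in A$, extend the closed positive current $\Theta=\mathrm{Ric}+\sqrt{-1}\partial\overline\partial u$ across $A$, apply Siu's analyticity theorem \cite{S2} to the extension, and recover $\overline{H_c}$ as the union of those irreducible components of the Lelong level set not contained in $A$. Your component bookkeeping in the second paragraph is correct (and more explicit than what the paper writes). The genuine gap is exactly where you flagged it: you never construct the extension $T$. Skoda--El Mir requires locally finite mass of the current near $A$, and complete pluripolarity of $A$ alone does not supply it; nor is it clear how to pull a $(1,1)$-current back to a resolution when its potential is only defined off $A$ and is not known to be $L^1_{loc}$ across $A$. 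Your assertion that $T$ has ``bounded local potential modulo pluriharmonic terms'' is also false: the potential is $\tfrac{1}{2\pi}\log|dz_1\wedge\cdots\wedge dz_n|^2+u$, which is unbounded below, and near a point of $A$ the chart, hence the potential, does not even exist.

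The missing idea is to use normality to make the extension automatic. Embed $(U,p)$ as a normal subvariety of $(\Omega,0)\subset(\mathbb{C}^N,0)$ and push $\Theta$ forward to a closed positive current of bidegree $(N-n+1,N-n+1)$, i.e.\ bidimension $(n-1,n-1)$, on $\Omega\setminus A$. Normality of $X$ forces $\dim_{\mathbb{C}}A\leq n-2<n-1$, and Siu's extension theorem \cite{S1} then extends the current across any analytic set of dimension strictly less than its bidimension with \emph{no} mass hypothesis: local finiteness of mass near $A$ is a consequence of the dimension count, not an input. With this $T$ in hand, your remaining steps (upper semicontinuity of Lelong numbers giving $\overline{H_c}\subseteq E_c(T)$, Siu's analyticity theorem, and discarding the components lying inside $A$) go through as you describe.
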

\begin{proof}The problem is local on $X$.
For any $p\in X$, take a sequence $M_i\ni p_i\to p$. For sufficiently large $i$, there exists $r_0>0$ independent of $i$ and smooth functions $u_i$ on $B(p_i, 2r_0)$ so that $\sqrt{-1}\partial\overline\partial u_i = \omega_i$ and $u_i(x)$ is close to $d^2(x, p_i)$. Write $U = B(p, r_0)$ and assume $u_i\to u$ uniformly on $U$. Now assume that $p$ is a complex analytically singular point on $X$.  Then $\Theta = \mathrm{Ric} + \sqrt{-1}\partial\overline\partial u$ is a closed positive $(1, 1)$ current on $U\backslash A$. It is clear that the Lelong number for $\Theta$ is the same as Lelong number for $\mathrm{Ric}$. By shrinking $U$ if necessary, we may assume that $(U, p)$ is a normal subvariety of $(\Omega, 0)\subset(\mathbb{C}^N, 0)$. We can trivially extend $\Theta$ as a positive closed $(N-n+1, N-n+1)$-current $\hat{\Theta}$ on $\Omega\backslash A$. Since $U$ is a normal variety, $A$ has complex dimension at most $n-2$. Thus the codimension of $A$ in $\Omega$ is at least $N-n+2$. According to \cite{S1}, $\hat{\Theta}$ extends to a closed positive current on $\Omega$. By applying Siu's theorem again, we proved the lemma.
\end{proof}

The following is a generalization of Donaldson-Sun's Proposition 4.14 in \cite{DS1}, where the Einstein case was treated (although their proof applies in the case of bounded Ricci curvature too).
\begin{cor}\label{cor:DSsingular}
Suppose that the $M_i$ above have uniformly bounded Ricci curvature. Then the metric singular set coincides with the complex analytic singular set on $X$.
\end{cor}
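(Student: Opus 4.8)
The plan is to derive the corollary from Proposition~\ref{prop-10}. One inclusion needs no hypothesis on the curvature: Proposition~\ref{prop-10} already shows that metric regularity implies complex analytic regularity, so the complex analytic singular set $A$ is contained in the metric singular set. For the reverse inclusion it suffices to show that, under a uniform bound $|\mathrm{Ric}(\omega_i)|\le\Lambda$, the Lelong number of the current $\mathrm{Ric}$ vanishes at every complex analytically regular point $p\in X$; by the ``if'' direction of Proposition~\ref{prop-10}, such a $p$ is then metrically regular, so the metric singular set is contained in $A$ as well.

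To prove this vanishing, fix a complex analytically regular point $p$, choose $M_i\ni p_i\to p$, and use the local holomorphic charts $(z_{i1},\dots,z_{in})$ on $B(p_i,r_0)$ converging to a chart $(z_1,\dots,z_n)$ centered at $p$, along with potentials $u_i$ satisfying $\sqrt{-1}\partial\overline\partial u_i=\omega_i$ and $u_i\to u$ uniformly; since $\Delta u_i$ is constant and $u_i$ is uniformly bounded, the Cheng--Yau estimate gives a uniform gradient bound, so the limit $u$ is Lipschitz, in particular bounded near $p$. Write $w_i=\frac{1}{2\pi}\log|dz_{i1}\wedge\cdots\wedge dz_{in}|^2$ and $w=\lim w_i=v-u$, so that $\mathrm{Ric}=\sqrt{-1}\partial\overline\partial w$ and, by the definition (\ref{-16}), the Lelong number of $\mathrm{Ric}$ at $p$ equals $\liminf_{x\to p} w(x)/\log|z(x)|$. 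After enlarging $\Lambda$ so that $|\mathrm{Ric}(\omega_i)|\le\Lambda$ holds as an inequality of $(1,1)$-forms, the Poincar\'e--Lelong equation gives $\sqrt{-1}\partial\overline\partial(w_i+\Lambda u_i)=\mathrm{Ric}(\omega_i)+\Lambda\omega_i\ge 0$ and $\sqrt{-1}\partial\overline\partial(-w_i+\Lambda u_i)=\Lambda\omega_i-\mathrm{Ric}(\omega_i)\ge 0$, so $g_i:=w_i+\Lambda u_i$ and $h_i:=-w_i+\Lambda u_i$ are plurisubharmonic on $B(p_i,r_0)$. As in the discussion preceding Proposition~\ref{prop-10}, after passing to a subsequence these converge in $L^1_{\mathrm{loc}}$ to plurisubharmonic limits $g=w+\Lambda u$ and $h=-w+\Lambda u$, neither identically $-\infty$ since $w_i$ cannot tend uniformly to $-\infty$ on a genuine chart.

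Now I would invoke two standard facts about plurisubharmonic functions: for plurisubharmonic $\varphi$ near $p$, the Lelong number of $\sqrt{-1}\partial\overline\partial\varphi$ at $p$ is $\nu(\varphi,p)=\liminf_{x\to p}\varphi(x)/\log|z(x)|\ge 0$, and Lelong numbers are additive under addition of positive $(1,1)$-currents. Since $g+h=2\Lambda u$ is bounded near $p$, we have $\nu(2\Lambda u,p)=0$, and hence
\[ 0\le\nu(g,p)+\nu(h,p)=\nu(g+h,p)=0, \]
so that $\nu(g,p)=0$. Finally, writing $w=g-\Lambda u$ and using that $\Lambda u$ is bounded near $p$ (so $\Lambda u(x)/\log|z(x)|\to 0$), we get $\liminf_{x\to p}w(x)/\log|z(x)|=\liminf_{x\to p}g(x)/\log|z(x)|=\nu(g,p)=0$, i.e. the Lelong number of $\mathrm{Ric}$ at $p$ vanishes. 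By Proposition~\ref{prop-10}, $p$ is metrically regular, and together with the elementary inclusion noted at the start this shows that the metric and complex analytic singular sets of $X$ coincide.

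The only step that needs some care is the passage to the limit: one must check, as is done in the paragraph before Proposition~\ref{prop-10}, that $w_i$ converges in $L^1_{\mathrm{loc}}$ with limit $w=v-u$ and that $u$ is bounded near $p$, so that the ``error'' current $\sqrt{-1}\partial\overline\partial(2\Lambda u)$ carries no Lelong mass. After that the argument is a purely formal manipulation of Lelong numbers, so I expect no essential difficulty.
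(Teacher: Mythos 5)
Your proposal is correct and follows essentially the same route as the paper: both reduce the statement to showing, via Proposition~\ref{prop-10}, that the Lelong number of $\mathrm{Ric}$ vanishes at complex analytically regular points, and both deduce this by trapping $\mathrm{Ric}$ between $\pm\Lambda\sqrt{-1}\partial\overline\partial u$ for the bounded potential $u$ and using that Lelong numbers of positive $(1,1)$-currents are nonnegative and additive (the paper phrases this as monotonicity, you as additivity of $\nu(g,p)+\nu(h,p)=\nu(2\Lambda u,p)=0$, which is the same fact). The extra care you take with the $L^1_{\mathrm{loc}}$ limit and the easy inclusion is already contained in the discussion preceding Proposition~\ref{prop-10}.
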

 \begin{proof}
Let $p$ be a complex analytically regular point. It suffices to prove that $p$ is metric regular.
According to Proposition~\ref{prop-10}, it suffices to show that the Lelong number for $\mathrm{Ric}$ vanishes at $p$. Recall the bounded function $u$ in the last lemma.
Let us assume $|\mathrm{Ric}(M_i)|\leq C$. Then as a closed positive $(1, 1)$ current, $\Theta = \mathrm{Ric}+ C\sqrt{-1}\partial\overline\partial u \leq 2C\sqrt{-1}\partial\overline\partial u$.
Note that the Lelong number for $\Theta$ is the same as for $\mathrm{Ric}$ at $p$. By monotonicity, the Lelong number of $\Theta$ is no greater than the Lelong number of the positive $(1, 1)$-current $2C\sqrt{-1}\partial\overline\partial u$. But $u$ is bounded, therefore the Lelong number for $\mathrm{Ric}$ vanishes at $p$.

 \end{proof}

 Theorem \ref{thm:singularset} is a direct consequence of Proposition \ref{prop-10}. For the reader's convenience, we rewrite it here.
\begin{theorem}
 Let $(X,d)$ be a Gromov-Hausdorff limit as in Theorem~\ref{thm:main}.
 Then for any $\epsilon>0$, $X\setminus \mathcal{R}_\epsilon$ is contained in a finite union of analytic subvarieties of $X$. Furthermore, $X\setminus \mathcal{R}$ is equal to a countable union of subvarieties.
\end{theorem}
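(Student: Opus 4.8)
The plan is to read off the two statements from the characterization of the metric singular set in Proposition~\ref{prop-10}, together with Lemma~\ref{SiuLemma}, while keeping track of how the Lelong number bound in Proposition~\ref{prop-10} depends quantitatively on the volume density. Recall that $A\subset X$ denotes the complex analytic singular set; since $X$ is a projective variety, $A$ is an analytic subvariety, in particular a finite union of irreducible subvarieties. For $c>0$ let $H_c\subset X\setminus A$ be the set of points at which the Lelong number of $\mathrm{Ric}$ is $\geq c$; by Siu's theorem $H_c$ is analytic in $X\setminus A$, and by Lemma~\ref{SiuLemma} its closure $\overline{H_c}$ is an analytic subvariety of $X$, again a finite union of irreducible subvarieties. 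Proposition~\ref{prop-10} says precisely that $p\in\mathcal R$ if and only if $p\notin A$ and the Lelong number of $\mathrm{Ric}$ vanishes at $p$; equivalently $X\setminus\mathcal R=A\cup\bigcup_{k\geq 1}H_{1/k}$.

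For the countable statement I would prove the stronger identity $X\setminus\mathcal R=A\cup\bigcup_{k\geq 1}\overline{H_{1/k}}$. The inclusion ``$\subseteq$'' is immediate from the displayed identity. For ``$\supseteq$'', take $q\in\overline{H_{1/k}}$. If $q\in A$ then $q\in X\setminus\mathcal R$ by Proposition~\ref{prop-10} (analytically singular points are metrically singular). If $q\notin A$, then $q$ is a limit of points with Lelong number of $\mathrm{Ric}$ at least $1/k$; writing locally $\mathrm{Ric}=\sqrt{-1}\partial\bar\partial v$ with $\sqrt{-1}\partial\bar\partial v$ a closed positive current (as in the proof of Lemma~\ref{SiuLemma}, up to $\sqrt{-1}\partial\bar\partial$ of a bounded function, which does not affect Lelong numbers), upper semicontinuity of Lelong numbers of closed positive currents gives that the Lelong number at $q$ is also $\geq 1/k>0$, so $q\in H_{1/k}\subset X\setminus\mathcal R$ by Proposition~\ref{prop-10}. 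Hence $\overline{H_{1/k}}\subset X\setminus\mathcal R$, and $X\setminus\mathcal R$ is a countable union of analytic subvarieties of $X$.

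For the finite statement, fix $\epsilon>0$ and take $p\in X\setminus\mathcal R_\epsilon$, so that the volume density defect $\omega_{2n}-\lim_{r\to 0}r^{-2n}\mathrm{vol}(B(p,r))$ is $\geq\epsilon$; in particular $p$ is metrically singular. If $p\in A$ there is nothing to prove, so assume $p$ is complex analytically regular. Inspecting the proof of Proposition~\ref{prop-10}: Claim~\ref{cl-12} holds at $p$ with a parameter $\epsilon_1=\epsilon_1(\epsilon)>0$ that, as observed in the remark following Claim~\ref{cl-12}, depends only on the density defect and hence only on $\epsilon$ (and $n,v,d$); feeding $\epsilon_1$ into the iteration following Claim~\ref{cl-12} gives $|dz_1\wedge\cdots\wedge dz_n|\leq 2Cr^{\epsilon_1}$ on $B(p,r)$, and combining with the lower bound $|z(q)|\geq c\,r(q)^{\alpha}$ of Claim~\ref{cl-11} with $\alpha=\alpha(n,v,d)$ shows that the Lelong number of $\mathrm{Ric}$ at $p$ is at least $2\epsilon_1/\alpha=:c(\epsilon,n,v,d)>0$. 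Thus $p\in\overline{H_{c(\epsilon)}}$, and we conclude $X\setminus\mathcal R_\epsilon\subset A\cup\overline{H_{c(\epsilon)}}$, a finite union of analytic subvarieties of $X$.

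The main obstacle is the uniformity in the third paragraph: one must check that the constant produced by the compactness argument in Claim~\ref{cl-12} can be taken to depend only on a \emph{lower} bound $\epsilon$ for the density defect, uniformly over all metrically singular, analytically regular $p$, and that the exponent $\alpha$ of Claim~\ref{cl-11} is genuinely independent of $p$ (it is, being governed only by Gromov--Hausdorff almost-rigidity at fixed $n,v,d$); monotonicity of $\epsilon_1(\cdot)$ in the density defect, or taking an infimum over the closed range $[\epsilon,\omega_{2n}]$, handles this. A secondary point, standard but worth stating, is that the Lelong number of $\mathrm{Ric}$ is intrinsically defined and upper semicontinuous on $X\setminus A$ even though $\mathrm{Ric}$ is only closed and positive modulo $\sqrt{-1}\partial\bar\partial$ of a bounded function.
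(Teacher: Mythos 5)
Your argument is correct and is exactly the route the paper intends: the paper declares the theorem ``a direct consequence of Proposition~\ref{prop-10}'', relying implicitly on Lemma~\ref{SiuLemma}, the remark after Claim~\ref{cl-12} (quantitative dependence of the Lelong bound on the density defect), and upper semicontinuity of Lelong numbers, all of which you make explicit. Your identification of the uniformity of $\epsilon_1$ over points with density defect $\geq\epsilon$ as the one point needing care is apt, and your resolution matches what the paper's remark is asserting.
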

Here $\mathcal{R}$ consists of points $x\in X$ with tangent cone $\mathbb{C}^n$, while $\mathcal{R}_\epsilon$ is the set of points $p$ so that $\omega_{2n}-\lim\limits_{r\to 0}\frac{\mathrm{Vol}(B(p, r))}{r^{2n}}  < \epsilon$..
In view of the main result of \cite{L1}, we can use the same argument to obtain the following.
\begin{theorem}\label{thm:bisect}
  Let $(X, p)$ be the pointed Gromov-Hausdorff limit of complete K\"ahler manifolds $(M^n_i, p_i)$ with bisectional curvature lower bound $-1$ and $\mathrm{vol}(B(p_i, 1))\geq v>0$. Then $X$ is homeomorphic to a normal complex analytic space. The metric singular set $X\setminus\mathcal{R}$ is exactly given by a countable union of complex analytic sets, and for any $\epsilon>0$, each compact subset of $X\setminus\mathcal{R}_\epsilon$ is contained in a finite union of subvarieties.\end{theorem}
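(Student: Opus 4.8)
The plan is to combine the main result of \cite{L1} with the local analysis of the Ricci current carried out in the proof of Proposition~\ref{prop-10}. By \cite{L1}, a non-collapsed pointed Gromov-Hausdorff limit of complete K\"ahler manifolds with bisectional curvature bounded below is homeomorphic to a normal complex analytic space, and --- exactly as in Theorem~\ref{thm:chart}, but now under the stronger bisectional curvature hypothesis, where this is \cite[Proposition 1.3]{L2} --- on any ball sufficiently Gromov-Hausdorff close to a Euclidean ball there is a holomorphic chart together with a bounded local potential $\phi$ for $\omega$ with $|\phi-r^2|$ small. In particular $\mathcal{R}_\epsilon$ for small $\epsilon$ acquires a complex manifold structure, metrically regular points are complex analytically regular, and near a complex analytically regular point $p$ we may take $M_i\ni p_i\to p$ together with local potentials $u_i$ with $\sqrt{-1}\partial\overline\partial u_i=\omega_i$ and $u_i$ uniformly close to $d^2(\cdot,p_i)$. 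Since bisectional curvature $\geq -1$ forces $\mathrm{Ric}(\omega_i)\geq -c(n)$, after rescaling these are precisely the hypotheses used in Section~\ref{sec:singularities}: the function $v_i=\frac{1}{2\pi}\log|dz_{i1}\wedge\cdots\wedge dz_{in}|^2+c(n)u_i$ is plurisubharmonic, cannot tend to $-\infty$ on the open set covered by the limiting chart, and hence (after a subsequence) converges in $L^1_{\mathrm{loc}}$ to a plurisubharmonic limit. This defines the closed $(1,1)$-current $\mathrm{Ric}$ on the complex analytically regular part of $X$, up to $\sqrt{-1}\partial\overline\partial$ of a bounded function, with a well-defined Lelong number at every such point.

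Next I would rerun the proof of Proposition~\ref{prop-10} without change. Its inputs are Claim~\ref{cl-10} (existence, for each $q\in\partial B(p,r)$, of a holomorphic function on $B(p,4r)$ with $f(p)=0$, $\sup_{B(p,2r)}|f|=1$, $|f(q)|>b(n,v)$), Claim~\ref{cl-11} (a Hadamard three-circle comparison against the model $B_{\mathbb{C}^n}(0,\delta)$), and Claim~\ref{cl-12} (a frequency/volume-ratio statement whose proof uses only that $2n$ linearly independent real linear-growth harmonic functions vanishing at the vertex force a tangent cone equal to $\mathbb{R}^{2n}$). Each of these uses only a Ricci lower bound, the non-collapsing bound, and the holomorphic chart structure --- never the polarization --- and each is already available in the bisectional setting: Claim~\ref{cl-10} follows from the three-circle and separation arguments of \cite{DS2} and \cite{L2} (cf.\ Proposition~\ref{prop:limitchart}), while Claims~\ref{cl-11} and \ref{cl-12} are purely local and go through verbatim. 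The conclusion is the analogue of Proposition~\ref{prop-10}: a point of $X$ is metrically regular if and only if it is complex analytically regular and the Lelong number of $\mathrm{Ric}$ vanishes there; moreover, tracking constants as in the Remark after Claim~\ref{cl-12}, the quantitative version holds: if $p$ is complex analytically regular with $\omega_{2n}-\lim_{r\to 0}r^{-2n}\mathrm{Vol}(B(p,r))\geq\epsilon$, then the Lelong number of $\mathrm{Ric}$ at $p$ is at least some $c(\epsilon,n,v)>0$.

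Finally I would deduce the structure of the singular set as after Proposition~\ref{prop-10}. Let $A\subset X$ be the complex analytically singular set; since $X$ is normal, $A$ is an analytic subvariety of complex dimension $\leq n-2$. For $c>0$ let $H_c\subset X\setminus A$ be the set where the Lelong number of $\mathrm{Ric}$ is $\geq c$; by Siu's theorem \cite{S2} this is an analytic subvariety of $X\setminus A$, and by Lemma~\ref{SiuLemma} --- whose proof uses only the normality of $X$, the bounded local potential $u$, and the extension theorem for positive closed currents of \cite{S1} --- its closure $\overline{H_c}$ is an analytic subvariety of $X$. By the qualitative analogue of Proposition~\ref{prop-10} we get $X\setminus\mathcal{R}=A\cup\bigcup_{k\geq 1}\overline{H_{1/k}}$, a countable union of subvarieties, while by the quantitative version $X\setminus\mathcal{R}_\epsilon\subset A\cup\overline{H_{c(\epsilon)}}$; since analytic subvarieties of the (possibly non-compact) space $X$ have locally finitely many irreducible components, any compact subset of $X\setminus\mathcal{R}_\epsilon$ meets only finitely many of them and is therefore contained in a finite union of subvarieties. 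The main obstacle is bookkeeping rather than ideas: one must check that every place Section~\ref{sec:partialc0} used sections of a power of $L$ (for instance the Stein sublevel set $\{-\log|s|^2<1000\}$ and the projective embedding) is, in the unpolarized bisectional setting, replaced by the corresponding construction in \cite{L1} --- direct solution of $\overline\partial$ with weight $e^{-h}$ for a local potential $h\approx r^2$, which yields the Stein neighborhoods and holomorphic charts directly --- and to observe that the singular-set argument of Section~\ref{sec:singularities} itself never invoked the line bundle, so it carries over once the chart and potential structure of \cite{L1} is in hand.
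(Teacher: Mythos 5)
Your proposal is correct and follows essentially the same route as the paper, which proves Theorem~\ref{thm:bisect} by invoking the main result of \cite{L1} for the normal complex analytic structure and then observing that the argument of Section~\ref{sec:singularities} (Proposition~\ref{prop-10}, Claims~\ref{cl-10}--\ref{cl-12}, and Lemma~\ref{SiuLemma}) never uses the polarization, only the Ricci lower bound, non-collapsing, and the holomorphic charts with bounded potentials, all of which are available in the bisectional setting via \cite{L2}. Your expansion of the details — in particular the quantitative tracking of the Lelong number lower bound for points of $X\setminus\mathcal{R}_\epsilon$ and the reduction of Claim~\ref{cl-10} to Proposition 6.1 of \cite{L2} — accurately fills in what the paper leaves implicit.
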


\section{Appendix}
In the proof of Proposition~\ref{prop:DS1} we used the estimate of Cheeger-Jiang-Naber~\cite{CJN} for the volumes of tubular neighborhoods of the singular set, in order to control the cutoff functions $\psi^3_i$. In this appendix we first give an alternative argument, following the approach of Chen-Donaldson-Sun~\cite{CDS2}, which is independent of the results in \cite{CJN}.
First note that the cone $V = \mathbb{R}^{2n-2} \times W$ for a two-dimensional cone $(W,o')$ has singular set $\mathbb{R}^{2n-2} \times \{o'\}$, and so we can directly see the required estimate for the volumes of its tubular neighborhoods. Therefore Propositions~\ref{prop:codim2cone} and \ref{prop:cone2} hold without appealing to \cite{CJN}.

We can now argue similarly to \cite{CDS2} to show that the singular sets in any cone $(V,o)$ arising in Proposition~\ref{prop:DS1} have good cutoff functions  (even closer to what we do are Propositions 12, 13 and 14 of the arXiv version of \cite{Sz}). More precisely, suppose that $(M_i, L_i, \omega_i)$ is a sequence as in Proposition~\ref{prop:DS1} such that $(M_i, p_i)$ converges to a cone $(V,o)$ for some $p_i\in M_i$. Recall that the singular set is $\Sigma\subset V$, consisting of points $q\in V$ such that $\lim_{r\to 0} r^{-2n}\mathrm{vol}(B(q,r)) \leq \omega_{2n}-\epsilon$, where $\epsilon$ is obtained from Theorem~\ref{thm:chart}. We then have the following.
\begin{prop} For any compact set $K\subset V$ and $\kappa > 0$ there is a function $\chi$ on $V$, equal to 1 on a neighborhood of $K\cap \Sigma$, supported in the $\kappa$-neighborhood of $K\cap \Sigma$, and such that $\int_K |\nabla\chi|^2  < \kappa$.
\end{prop}

\begin{proof}
  Let us fix $K, \kappa$.
Suppose that we have distance functions $d_i$ on $B(p_i,2R) \sqcup B(o,2R)$, realizing the Gromov-Hausdorff convergence, where $R$ is large so that $K\subset B(o,R)$. For $q\in B(o,R)$, and $\rho\in (0,1)$, define
\[ V(i, q, \rho) = \rho^{2-2n} \int_{U_i(q, \rho)} (S_i + 2n) \omega_i^n, \]
where $U_i(q,\rho) = \{x \in B(p_i,2)\,:\, d_i(x, q) < \rho\}$. Note that $S_i + 2n \geq 0$.

Let us denote by $\mathcal{D}\subset K\cap \Sigma$ the set of points which have a tangent cone splitting off $\mathbb{R}^{2n-2}$, and let $S_2 = \Sigma \setminus \mathcal{D}$. Proposition \ref{prop:cone2} implies that for any $x\in \mathcal{D}$ there exists a $\rho_x > 0$ such that $V(i,x,\rho_x) < A$ for a fixed constant $A$, for sufficiently large $i$. At the same time by Proposition~\ref{prop:cone2} we also have a constant $c_0 > 0$ (depending on $n, v, d, \epsilon$) such that for any $x\in \mathcal{D}$ and $\delta > 0$ there is an $r_x < \delta$ such that $V(i,x,r_x) > c_0$ for sufficiently large $i$ (here note that the two dimensional cones appearing in tangent cones of points in $\mathcal{D}$ have cone angles bounded strictly away from $2\pi$).

By Cheeger-Colding's \cite{[CC2]} the Hausdorff dimension of
$S_2$ is at most $2n-4$, so for any small $\epsilon > 0$
 we can cover $S_2\cap K$ with balls $B_\mu$ such that
\[ \sum_\mu r_\mu^{2n-3} < \epsilon. \]
The set
\[ J = (K\cap \Sigma) \setminus \cup_\mu B_\mu  \]
is compact, $J\subset \mathcal{D}$, and so it is
covered by the balls $B(x,\rho_x)$ with $x\in \mathcal{D}$. We choose a
finite subcover corresponding to $x_1,\ldots, x_N$, and set
\[ \begin{aligned}
    W &= \bigcup_{j=1}^N B(x_j, \rho_{x_j}) \subset V, \\
    W_i &= \bigcup_{j=1}^N U_i(x_j, \rho_{x_j}) \subset M_i.
\end{aligned} \]
For sufficiently large $i$ we then get an estimate
\begin{equation}\label{eq:1}
\int_{W_i} (S_i + 2n) \omega_i^n < C,
\end{equation}
where $C$ depends on $\epsilon, N$, but not on $i$.

We claim that the compact set $ J \subset\mathcal{D}$ has finite $(2n-2)$-dimensional
Hausdorff measure. To prove this, recall that for any small $\delta >
0$ and all $x\in
\mathcal{D}\cap J$ we have  $r_x < \delta$ such that $V(i,x,r_x) >
c_0$ for large $i$. By a Vitali type covering argument we can find a
disjoint, finite sequence of balls $B(y_k, r_{y_k})$ in $W$, for $k=1,\ldots, N'$
such that $B(y_k, 5r_{y_k})$ cover all of $J$. It follows that
\[ \mathcal{H}^{2n-2}_\delta (J) \leq \sum_{k=1}^{N'} 5^{2n-2}
r_{y_k}^{2n-2}. \]
At the same time for each $y_k$, we have the estimate
\[ r_{y_k}^{2-2n} \int_{U_i(y_k,r_{y_k})} (S_i + 2n)
\omega_i^n > c_0, \]
for sufficiently large $i$. Taking $i$ even larger we can assume that the $U_i(y_k, r_{y_k})$ are disjoint, since they converge in the Gromov-Hausdorff sense to the disjoint balls $B(y_k, r_{y_k})$. Using \eqref{eq:1} we therefore have
\[ \sum_{k=1}^{N'} c_0 r_{y_k}^{2n-2} < C. \]
Since $\delta$ was arbitrary (and $C$ is independent of $\delta$),
this implies that  $\mathcal{H}^{2n-2}(J) \leq C'$.

It follows that $ J$ has capacity zero, in the
sense that for any $\kappa > 0$ we can find a cutoff function
$\eta_1$ supported in the $\kappa$-neighborhood of $J$, such that $\Vert \nabla \eta_1\Vert_{L^2} \leq \kappa /2$, and
$\eta_1 = 1$ on a neighborhood $U$ of $J$ (see for
instance \cite[Lemma 2.2]{Bou} or \cite[Theorem 3, p. 154]{EG92}). The set
$(K\cap \Sigma) \setminus U$ is compact, and so it is covered by
finitely many of our balls $B_\mu$ from before. Because of this, as in
\cite{DS1},
we can find a good cutoff function
$\eta_2$, with $\Vert \nabla \eta_2\Vert_{L^2} \leq \kappa/2$ (if
$\epsilon$ at the beginning was sufficiently small), such that $\eta_2$ is supported in the $\kappa$-neighborhood of $(K\cap\Sigma)\setminus U$ and with
$\eta_2 = 1$ on a neighborhood of $(K\cap \Sigma)\setminus U$. Then
$\eta = 1 - (1-\eta_1)(1-\eta_2)$ gives the required cutoff function.
\end{proof}

We next prove a result essentially contained in Cheeger-Colding-Minicozzi~\cite{CCM}, that we used in the proof of Proposition~\ref{prop-10}.
\begin{prop}\label{prop:splitting}
  Let $(V,o)$ denote a tangent cone of a non-collapsed limit space of manifolds with Ricci curvature bounded from below.
  Suppose that there are $k$ linearly independent harmonic functions
  $u^1,\ldots, u^k$ on $V$ that are homogeneous of degree one. Then we
  have a splitting $V = \mathbb{R}^k \times Y$.
\end{prop}
\begin{proof}
By assumption we have a sequence $B(p_i, 2)$ of balls in Riemannian
manifolds with
$\mathrm{Ric} > -i^{-1}$, such that $B(p_i,2)\to B(o,2)$ in the
Gromov-Hausdorff sense. We will prove the following: for any $\delta >
0$, we can find an $r > 0$ and $\delta$-splitting maps $u_i : B(p_i,
r) \to \mathbb{R}^k$ for sufficiently large $i$. Since $V$ is a cone, after
scaling up by $r^{-1}$ and taking a diagonal sequence, we find a
sequence $B(p_i', 1)\to B(o, 1)$ such that each $B(p_i', 1)$ admits an
$i^{-1}$-splitting map.  From this it follows that $B(o,1/2)$ splits
an isometric factor of $\mathbb{R}^k$. For this, see
Cheeger-Colding~\cite{CC}, or
Cheeger-Naber~\cite[Definition 1.20, Lemma 1.21]{CN3}.

Before we begin let us recall the notion of a $\delta$-splitting map. A map $u=(u^1,
\ldots, u^k):
B(p,r) \to \mathbb{R}^k$ is a $\delta$-splitting map, if it is
harmonic, and satisfies
\begin{enumerate}
\item $|\nabla u| < 1 + \delta$,
\item $\dashint_{B_r(p)} |\, \langle \nabla u^\alpha, \nabla
  u^\beta\rangle - \delta^{\alpha\beta}|^2 < \delta^2$,
\item $r^2 \dashint_{B_r(p)} |\nabla^2 u^\alpha|^2 < \delta^2$.
\end{enumerate}

Consider again our sequence $B(p_i, 2) \to B(o,2)$. We can assume
that
\[ \dashint_{B(o,2)} \langle \nabla u^\alpha,\nabla u^\beta\rangle =
\delta^{\alpha\beta}, \]
and since the $u^\alpha$ are homogeneous, this implies that for all
$r$ we have
\[ \dashint_{B(o,r)} \langle\nabla u^\alpha,\nabla u^\beta \rangle=
\delta^{\alpha\beta}. \]
 We can find a
sequence of harmonic functions $u_i^{\alpha}$ on $B(p_i, 2)$ such that
under the Gromov-Hausdorff convergence we have $u_i^\alpha \to
u^\alpha$ uniformly on each compact set, and moreover for any $0 < r < 2$,
\begin{equation} \label{eq:limL2} \lim_{i\to\infty} \dashint_{B(p_i,r)} \langle \nabla u_i^\alpha, \nabla
u_i^\beta\rangle =
\delta^{\alpha\beta}. \end{equation}
Let $f_i$ denote a harmonic function of the form $u^\alpha_i$ or
$\frac{1}{\sqrt 2} (u_i^\alpha \pm u_i^\beta)$ for $\alpha\not=
\beta$. By the Bochner formula $\Delta |\nabla f_i|^2 \geq -\Psi(i^{-1})
|\nabla f_i|^2$, and so by the mean value inequality, for sufficiently
large $i$ we have
\[ \sup_{B(p_i, 1.5)} |\nabla f_i|^2 \leq C. \] It follows that for any $0 < r < 1.5$ we have
\begin{equation} \label{eq:conv}\lim_{i\to\infty} \dashint_{B(p_i,r)} |\nabla f_i|^2 = 1. \end{equation}
As the gradient of $f_i$ is uniformly bounded, we find the above convergence is uniform on the interval $a<r<1$, where $a>0$ is any constant.

Note that $\sup_{B(p_i, r)} |\nabla f_i|^2 \geq 1/2$, and so
for large $i$
\[ \sup_{B(p_i,1)} |\nabla f_i|^2 \leq 2C \sup_{B(p_i,r)} |\nabla
  f_i|^2. \]
Given $\epsilon > 0$, we can then choose $r_0 > 0$ depending on
$\epsilon, C$, such that for all sufficiently large $i$ there is some
$r \in (r_0, 1/10)$, perhaps depending on $i$, satisfying
\[ \sup_{B(p_i, 3r)} |\nabla f_i|^2 \leq (1-\epsilon)^{-1} \sup_{B(p_i,
    r)} |\nabla f_i|^2. \]
Consider now the functions $v_i = \sup_{B(p_i, 3r)}|\nabla f_i|^2 -
|\nabla f_i|^2$. Then on $B(p_i, 3r)$, $v_i \geq 0$,
\[ \Delta v_i \leq \Psi(i^{-1}) \sup_{B(p_i, 3r)}
|\nabla f_i|^2= \Psi(i^{-1}), \]
 and
\[ \inf_{B(p_i, r)} v_i = \sup_{B(p_i, 3r)}|\nabla f_i|^2 -
  \sup_{B(p_i, r)} |\nabla f_i|^2 \leq \epsilon \sup_{B(p_i, 3r)}
  |\nabla f_i|^2. \]
From the weak Harnack inequality, once $i$ is sufficiently large,
\[ \dashint_{B(p_i, 2r)} v_i \leq C(\Psi(i^{-1})+ \epsilon
  \sup_{B(p_i, 3r)} |\nabla f_i|^2) \leq 2\epsilon C
  \sup_{B(p_i, 3r)} |\nabla f_i|^2. \]
This implies
\[ (1-2C\epsilon) \sup_{B(p_i, 3r)} |\nabla f_i|^2 \leq \dashint_{B(p_i,
    2r)} |\nabla f_i|^2,  \]
where $C$ depends only on the non-collapsing constant, through the
Sobolev inequality. Recall $r$ (depending on $i$) has a lower bound $r_0$, and so by (\ref{eq:conv}),
 \[ \lim_{i\to\infty} \dashint_{B(p_i,2r)} |\nabla f_i|^2 = 1. \]

It follows that for sufficiently large $i$ we have
\[\sup_{B(p_i, 2r_0)}|\nabla f_i|^2\leq \sup_{B(p_i, 3r)}|\nabla f_i|^2 \leq 1+C\epsilon.\]
Therefore,
\[ \dashint_{B(p_i, 2r_0)}||\nabla f_i|^2-1|<\Psi(\epsilon).\]
 We now apply this to $f_i = u^\alpha_i$, and to $f_i = \frac{1}{\sqrt{2}}(u_i^\alpha
\pm u_i^\beta)$, and use the polarization identity
\[ \frac{1}{2} |\nabla (u_i^\alpha + u_i^\beta)|^2 - \frac{1}{2}
  |\nabla (u_i^\alpha - u_i^\beta)|^2 = 2 \langle \nabla u_i^\alpha,
  \nabla u_i^\beta\rangle. \]
Using also \eqref{eq:limL2} we find that for sufficiently large $i$ (depending on
$\epsilon$), we have
\[ \dashint_{B(p_i, 2r_0)} \Big| \langle \nabla u_i^\alpha, \nabla
  u_i^\beta \rangle - \delta^{\alpha\beta} \Big| < \Psi(\epsilon). \]
Since at the same time $|\nabla u_i^\alpha|^2 \leq 1 + \Psi(\epsilon)$, we
can use the Bochner formula, using a cutoff function $\phi$ as in
Cheeger-Colding~\cite{CC} supported in $B(p_i, 2r_0)$, equal to 1 in
$B(p_i, r_0)$. We find that for sufficiently large $i$,
\[ \begin{aligned}  \dashint_{B(p_i, r_0)} |\nabla^2
    u^\alpha_i|^2 &\leq \dashint_{B(p_i, 2r_0)} \frac{1}{2} \phi \Delta (|\nabla
    u^\alpha_i|^2 -1) - \Psi(1/i) \dashint_{B(p_i, 2r_0)} \phi
    |\nabla u^\alpha_i|^2 \\
    &\leq C r_0^{-2} \dashint_{B(p_i, 2r_0)} \Big| |\nabla
    u^\alpha_i|^2 - 1\Big| - \Psi(1/i) \\
    &\leq r_0^{-2} \Psi(\epsilon).
  \end{aligned}\]

If $\epsilon$ is chosen sufficiently small, depending on $\delta > 0$,
then this shows that $u_i = (u_i^1,\ldots, u_i^k)$ is a
$\delta$-splitting map on $B(p_i, r_0)$, for sufficiently large $i$.
\end{proof}

\end{document}